\newcommand{\R}{{\mathbb R}}
\newcommand{\Ss}{{\mathbb S}}
\def\u{{\mathbf u}}
\def\v{{\mathbf v}}
\def\f{{\mathbf f}}
\def\vphi{{\boldsymbol\phi}}
\def\vpsi{{\boldsymbol\psi}}
\def\ee{{\mathbf e}}
\def\h{{\mathbf h}}
\def\g{{\mathbf g}}
\def\w{{\mathbf w}}
\def\a{{\mathbf a}}
\def\b{{\mathbf b}}
\def\0{{\mathbf 0}}
\newcommand{\e}{\epsilon}
\newcommand{\vp}{\varphi}
\newcommand{\ra}{\rightarrow}
\theoremstyle{plain}
\newtheorem{theorem}{Theorem}[section]
\newtheorem{lemma}[theorem]{Lemma}
\newtheorem{proposition}[theorem]{Proposition}
\theoremstyle{definition}
\theoremstyle{remark}
\newtheorem{case}{Case}
\newtheorem{remark}[theorem]{Remark}
\numberwithin{equation}{section}
\title[Isolated Singularities for Elliptic Systems]{Isolated Singularities for Semilinear Elliptic Systems \\ with Power-Law Nonlinearity}
\author{Marius Ghergu}
\address{School of Mathematics and Statistics, University College Dublin, Belfield, Dublin 4, Ireland}
\email{marius.ghergu@ucd.ie}
\author{Sunghan Kim}
\address{Department of Mathematical Sciences, Seoul National University, Seoul 08826, Korea}
\email{sunghan290@snu.ac.kr}
\author{Henrik Shahgholian}
\address{Department of Mathematics, Royal Institute of Technology, 100 44 Stockholm, Sweden}
\email{henriksh@kth.se}
\keywords{Elliptic system; Isolated singularity; Asymptotic behavior; Pohozaev invariant}
\subjclass[2010]{Primary 35J61; Secondary 35J75, 35B40, 35C20. }
\begin{document}

\maketitle

\begin{abstract}

We study the system $-\Delta \u=|\u|^{\alpha-1} \u$ with $1<\alpha\leq\frac{n+2}{n-2}$, where $\u=(u_1,\dots,u_m)$, $m\geq 1$, is a $C^2$ nonnegative function that develops an isolated singularity in a domain of $\R^n$, $n\geq 3$. Due to the multiplicity of the components of $\u$, we observe a new Pohozaev invariant other than the usual one in the scalar case, and also a new class of singular solutions provided that the new invariant is nontrivial. Aligned with the classical theory of the scalar equation, we classify the solutions on the whole space as well as the punctured space, and analyze the exact asymptotic behavior of local solutions around the isolated singularity. On the technical level, we adopt the method of the moving spheres and the balanced-energy-type monotonicity functionals. 
\end{abstract}

\tableofcontents


\section{Introduction}\label{section:intro}


\subsection{Background}

This paper concerns the analysis of singular solutions to  semilinear elliptic systems with power-law nonlinearity of type
\begin{equation}\label{eq:main}
-\Delta \u=|\u|^{\alpha-1} \u,
\end{equation}
where $1<\alpha\leq\frac{n+2}{n-2}$, and $\u = (u_1,\dots,u_m)$, $m\geq 1$, is a $C^2$ vector-valued function defined on a domain in $\R^n$, $n\geq 3$. Our primary interest is in the case when each component of $\u$ is nonnegative and the domain is of the form $B_R\setminus \{0\}$, with $B_R$ being the ball of radius $R$ centered at the origin. It is by now well known  that in cylindrical coordinates $t= -\log |x| \in \R$ and $\theta = \frac{x}{|x|}\in\Ss^{n-1}$, the transformation 
\begin{equation}\label{eq:cyl}
\u(x) = |x|^{-\frac{2}{\alpha-1}} \v\left(-\log |x|,\frac{x}{|x|}\right), 
\end{equation} 
yields the system
\begin{equation}\label{eq:main-cyl}
\partial_{tt}\v + \mu \partial_t\v + \Delta_\theta \v - \lambda \v + |\v|^{\alpha-1}\v = 0,
\end{equation}
in $(-\log R,\infty)\times\Ss^{n-1}$, and vice versa, where $\Delta_\theta$ is the Laplace-Beltrami operator on $\Ss^{n-1}$ and $\lambda$ and $\mu$ are the constants fixed throughout this paper by
\begin{equation}\label{eq:lam-mu}
\lambda = \frac{2}{\alpha-1} \left( n - 2 - \frac{2}{\alpha-1} \right),\quad \mu = \frac{4}{\alpha-1} - n + 2.
\end{equation}

The scalar case of this system was introduced by Lane \cite{Lane-1870} and later studied by Emden  \cite{Emden-1907} for describing distribution of mass densities in spherical polytropic star in hydrostatic equilibrium. Since its birth, this equation has been used in many applications such as astrophysics, kinetic theory, and quantum mechanics (see \cite{Goenner-2000}). The Lane-Emden equation has thus been  subject to intensive studies in the last few decades and nowadays  there is  a vast amount of literature treating many  aspects of the solutions to this equation and its diverse varieties.

One of the central questions\footnote{To the best of our knowledge there are three central questions in this area. The other two questions refer to the structure of singular sets (see \cite{Pacard-1993}), and non-existence theory  (see \cite{Grigoryan}, \cite{Souplet-2009}). } and a technically difficult problem for differential equations and systems  is the study of the singular solutions, that is, solutions that develop singularities. In the scalar case, the classical and subsequent works have considered the asymptotic behavior of the solutions  close to isolated singularities, with an accurate description of the asymptotic behavior of solutions around such singular points; see e.g., \cite{A2, BVV, CL2, CGS, KMPS,V} and the references therein. 

The system \eqref{eq:main} can be considered as a generalization of the Lane-Emden equation, and can also be viewed as a strongly coupled system of nonlinear Schr\"odinger equations (or more precisely the limiting system of the associated blowup solutions). In the latter point of view, there has been some development regarding classification of the global solutions, and compactness of the blowup sequence; see for instance \cite{CL, DHV} and the references therein. In the former point of view, there are many other types of generalizations, among which the Lane-Emden-Fowler systems have received considerable attention. Among possible references, we refer to \cite{BVR, BG, BVG, BM, DFF, PQS, SZ1996} for the classification of global solutions, non-existence theory of singular, positive solutions and local estimates of solutions to the Lane-Emden-Fowler systems. We refer to \cite{RZ,Z} for more general cooperative elliptic systems, and the references therein. One may also consult to the monograph \cite{DF} for a general theory regarding semilinear elliptic systems. To the best of the authors' knowledge, this is the first paper that conducts a thorough analysis on the qualitative behavior of the system \eqref{eq:main}, particularly regarding the classification of the solutions on the punctured space $\R^n\setminus\{0\}$ with respect to the balanced-energy-type functionals (subcritical case $1<\alpha<\frac{n+2}{n-2}$) and the Pohozaev identities (critical case $\alpha = \frac{n+2}{n-2}$), as well as the asymptotic behavior of local solutions around the isolated singularities. 

The key difference between the system \eqref{eq:main} and its scalar version is, of course, the multiplicity of the components. The major observation in this paper is that the system \eqref{eq:main} turns out to be very sensitive to the settimg of multiple components in the case of the upper critical exponent (that is, $\alpha = \frac{n+2}{n-2}$) and lower critical exponent (that is, $\alpha = \frac{n}{n-2}$). Specifically, in the upper critical case $\alpha = \frac{n+2}{n-2}$, we discover a new Pohozaev invariant other than the usual one. The lower critical case is rather technical and we shall present the discussion on this issue in Section \ref{subsection:asym0}. 

Let us briefly illustrate how the new Pohozaev invariant comes into play in the analysis of the system \eqref{eq:main} in the upper critical case. For the sake of clarity, let us assume that the solution $\u$ is rotationally symmetric, so that the cylindrical transformation $\v$ is a function of $t$ only. After some manipulation, one can obtain the usual Pohozaev identity, 
\begin{equation}\label{eq:brief}
\left| \frac{d\v}{dt} \right|^2 =  \frac{(n-2)^2}{4} |\v|^2 -\frac{n-2}{n} |\v|^{\frac{2n}{n-2}} + \kappa,
\end{equation}
for the system \eqref{eq:main-cyl}, with a constant $\kappa$, also known as the usual Pohozaev invariant. Due to the presence of the multiple components, we have
\begin{equation}\label{eq:2nd-poho-1}
\left| \frac{d\v}{dt} \right|^2 - \left(  \frac{d|\v|}{dt} \right)^2 = \frac{1}{|\v|^2} \sum_{1\leq i<j\leq m} \left( v_i\frac{dv_j}{dt} - v_j \frac{dv_i}{dt} \right)^2 \geq 0,
\end{equation}
and the equality on the rightmost side does not hold in general. This shows that $\kappa$ alone is insufficient to analyze the behavior of $|\v|$ completely, due to the discrepancy \eqref{eq:2nd-poho-1} between $|d\v/dt|$ and $|d|\v|/dt|$. In this paper, we find that there is another constant $\kappa_*$ such that
\begin{equation}\label{eq:|v|-poho}
\left( \frac{d |\v|}{dt} \right)^2 = \frac{(n-2)^2}{4} |\v|^2 -\frac{n-2}{n} |\v|^{\frac{2n}{n-2}} + \kappa + \frac{\kappa_*}{|\v|^2},
\end{equation}
and we shall call this constant the new Pohozaev invariant. 

Thanks to an anonymous referee, we also observe a more precise characterization of the new invariant. Multiplying by $v_i$ and $-v_j$ in the $j$-th and respectively in the $i$-th component of the system \eqref{eq:main-cyl} (with $\alpha = \frac{n+2}{n-2}$), and then adding the resulting equations together side by side, we deduce that 
\begin{equation*}
\frac{d }{dt} \left( v_i\frac{dv_j}{dt} - v_j \frac{dv_i}{dt} \right) = 0,\quad 1\leq i,j\leq m.
\end{equation*}  
Thus, there exists a constant $k_{ij}$ such that for each $1\leq i,j\leq m$ we have  
\begin{equation}\label{eq:2nd-poho-2}
 v_i\frac{dv_j}{dt} - v_j \frac{dv_i}{dt}  = k_{ij}.
\end{equation} 
Inserting \eqref{eq:2nd-poho-2} into \eqref{eq:2nd-poho-1} and comparing it with \eqref{eq:|v|-poho}, we find that 
\begin{equation}\label{eq:2nd-poho}
\kappa_* = - \sum_{1\leq i<j\leq m} k_{ij}^2.
\end{equation}
Due to such a precise characterization, we also find an explicit solution featuring $\kappa_* \neq 0$, at least in the two-particle systems (i.e., $m=2$); see Remark \ref{remark:Phi*-const3}. Without the radial symmetry, we obtain a more general formula \eqref{eq:kappa*} for the new Pohozaev invariant. 

We point out that the analysis of the behavior of solutions to system \eqref{eq:main} involve both $\kappa$ and $\kappa_*$. As surprising as it may sound, one can construct radially symmetric solutions to the two-particle system (i.e., $m=2$) having non-removable singularity, even if the associated standard Pohozaev invariant $\kappa$ is zero, see Remark \ref{remark:Phi*-const3} for full details. This is a significant difference from the case of scalar equation, where $\kappa$ fully determines the behavior of the solution around the isolated singularity, and especially $\kappa = 0$ is a sufficient and necessary condition to have removable singularity. 

Classification of solutions in higher dimensional systems (i.e., $m\geq 3$) is of independent interests and would lead to a more complete picture of this problem. Nonetheless, it is beyond the scope of this article, and we shall not push further towards this direction here. 

On the technical level, the system \eqref{eq:main} exhibits some subtleties compared to the scalar case. One of the main tools we employ in the study of \eqref{eq:main} is the method of moving spheres, which has been considered in \cite{JLX, LZ} and then continuously developed especially in the frame of the fractional Laplace operator (see, e.g., \cite{JLX2, CJSX}). The use of such a method in the case of systems requires particular attention, since the procedure can be continued in some components but should stop in others.

Another technical tool is the balanced-energy-type monotonicity functional (see e.g., \eqref{eq:Phi} below), which yields the Pohozaev identity in the upper critical case $\alpha = \frac{n+2}{n-2}$, combined with the blowup analysis. Such energy functional has been a classical tool for the study of scalar case (see, e.g., \cite{BVV, A2, KMPS} and many others). We believe that the argument presented in this paper regarding the energy functional is more effective, due to an easy observation on the scaling relation \eqref{eq:Phi-scale} that is standard in the framework of free boundary problems.


\subsection{Main Results}\label{subsection:result}

The main results are as follows. First we classify the solutions on the entire space, via the method of moving spheres.

\begin{theorem}\label{theorem:main-g} Let $\u$ be a nonnegative solution of \eqref{eq:main} in $\R^n$ with $1<\alpha\leq\frac{n+2}{n-2}$.
\begin{enumerate}[(i)]
\item If $1<\alpha<\frac{n+2}{n-2}$, $\u$ is trivial. 
\item If $\alpha = \frac{n+2}{n-2}$, then $\u$ is of the form 
\begin{equation}\label{eq:u-g}
\u (x) = (n(n-2))^{\frac{n-2}{4}} \left( \frac{r}{r^2 + |x-z|^2}\right)^{\frac{n-2}{2}} \ee,
\end{equation}
for some $z\in\R^n$, $r\geq 0$ and a unit nonnegative vector $\ee\in\R^m$. 
\end{enumerate}
\end{theorem}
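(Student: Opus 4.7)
My plan is to use the method of moving spheres to classify nonnegative entire solutions of \eqref{eq:main}, adapting the scalar arguments of Li--Zhang \cite{LZ} to the vector-valued setting. Define the Kelvin transform componentwise,
\[
\u^{x,\lambda}(y) := \Big(\tfrac{\lambda}{|y-x|}\Big)^{n-2} \u\!\left(x + \tfrac{\lambda^2(y-x)}{|y-x|^2}\right), \quad y\in\R^n\setminus\{x\},
\]
and verify by direct computation that each component satisfies
\[
-\Delta u_i^{x,\lambda}(y) = \Big(\tfrac{\lambda}{|y-x|}\Big)^{4-(n-2)(\alpha-1)} |\u^{x,\lambda}(y)|^{\alpha-1} u_i^{x,\lambda}(y).
\]
The exponent $4-(n-2)(\alpha-1)$ vanishes in the critical case and is strictly positive in the subcritical range. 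By the strong maximum principle applied to $-\Delta u_i = |\u|^{\alpha-1}u_i \geq 0$, each $u_i$ is either identically zero or strictly positive on $\R^n$, so it suffices to work with the nontrivial index set $I := \{i : u_i\not\equiv 0\}$.

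The key comparison is the following: on any region where $w_j^\lambda := u_j^{x,\lambda} - u_j \geq 0$ for every $j \in I$, one has $|\u^{x,\lambda}|\geq |\u|$, and hence, using $\alpha>1$,
\[
-\Delta w_i^\lambda \geq |\u|^{\alpha-1} w_i^\lambda + \Big[\Big(\tfrac{\lambda}{|y-x|}\Big)^{4-(n-2)(\alpha-1)}-1\Big] |\u|^{\alpha-1} u_i^{x,\lambda}.
\]
In the critical case the bracketed term vanishes and one has a pure linear differential inequality to which the vectorial strong maximum principle applies; in the subcritical case the bracketed term is \emph{strictly negative} on $\R^n\setminus\overline{B_\lambda(x)}$, providing the rigidity needed to rule out nontrivial solutions.

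For part~(i), I would initialize $w_i^\lambda\geq 0$ for small $\lambda>0$ using the smoothness of $u_i$ near $x$, attempt to push $\lambda$ upward, and deduce from the strictly negative bracket, via the usual moving-sphere contradiction scheme (comparing the transformed equation with the original and using the decay of $u_i^{x,\lambda}$ at infinity), that no nontrivial component can survive; hence $\u\equiv 0$. For part~(ii), moving spheres instead terminates at a finite critical radius $\bar\lambda(x) := \min_{i\in I}\bar\lambda_i(x) \in(0,\infty)$ for each center $x$; at this value, the strong maximum principle on the linear inequality above forces $u_{i_0}^{x,\bar\lambda(x)}\equiv u_{i_0}$ outside $B_{\bar\lambda(x)}(x)$ for an index $i_0$ realizing the minimum. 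The resulting Kelvin symmetry of $u_{i_0}$ at every $x\in\R^n$ pins it down, by the standard calculus of moving spheres, to a bubble of the form $c_{i_0}\big(r/(r^2+|\cdot-z|^2)\big)^{(n-2)/2}$. Plugging this back into \eqref{eq:main} for the remaining components and using the coupling through $|\u|^{\alpha-1}$ together with the Pohozaev-type identities available in the critical case, one shows that every other $u_j$ is proportional to the same bubble with the same $(z,r)$. The normalization coming from \eqref{eq:main} then fixes the coefficient vector to be $(n(n-2))^{(n-2)/4}\ee$ with $\ee\in\R^m$ a unit nonnegative vector, giving \eqref{eq:u-g}.

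The main technical hurdle, as flagged in the introduction, is the \emph{vectorial coupling} of moving spheres: unlike in the scalar case, the procedure may stop at different critical radii for different components, so the linear comparison above gives direct information only about the component $i_0$ realizing the minimum of $\bar\lambda_i(x)$. Propagating the resulting Kelvin symmetry to the remaining components, and doing so consistently as the center $x$ varies over $\R^n$ to pin down a single $(z,r)$, is where most of the technical work is concentrated and is the principal departure from the scalar moving-spheres calculus.
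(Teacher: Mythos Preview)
Your overall strategy---moving spheres \`a la Li--Zhang, componentwise Kelvin transform, critical radius $\bar\lambda(x)=\min_{i\in I}\bar\lambda_i(x)$---is exactly the paper's approach. But you have overcomplicated the key step and, as stated, your argument has a gap.

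At the critical radius you conclude only that the minimizing component $u_{i_0}$ satisfies $u_{i_0}^{x,\bar\lambda(x)}\equiv u_{i_0}$, and then propose to propagate this to the other components via Pohozaev-type identities while letting $x$ vary. The gap is that $i_0=i_0(x)$ may depend on $x$, so you do not yet have Kelvin symmetry of a \emph{fixed} component at every center, and the ``standard calculus of moving spheres'' does not directly apply. The fix, which is also what the paper (following \cite{DHV}) does, is much simpler and avoids Pohozaev identities entirely: once $u_{i_0}\equiv u_{i_0}^{x,\bar\lambda(x)}$ with $u_{i_0}>0$, compare the two equations
\[
-\Delta u_{i_0}=|\u|^{\alpha-1}u_{i_0},\qquad -\Delta u_{i_0}^{x,\bar\lambda(x)}=\Big(\tfrac{\bar\lambda(x)}{|y-x|}\Big)^{(\alpha-1)\mu}|\u^{x,\bar\lambda(x)}|^{\alpha-1}u_{i_0}^{x,\bar\lambda(x)},
\]
to deduce (in the critical case $\mu=0$) that $|\u|\equiv|\u^{x,\bar\lambda(x)}|$ wherever $u_{i_0}>0$, hence everywhere. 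Since you already have $u_j^{x,\bar\lambda(x)}\le u_j$ for every $j\in I$ at this radius, equality of the moduli forces $u_j^{x,\bar\lambda(x)}\equiv u_j$ for \emph{all} $j\in I$. Thus every nontrivial component is Kelvin-symmetric at the common radius $\bar\lambda(x)$, for every $x$; one then applies \cite[Lemma~11.1]{LZ} componentwise to get each $u_i$ a bubble, and the final step---showing all bubbles share the same $(z,r)$---is done in \cite{DHV} by plugging back into the system, without Pohozaev identities. In the subcritical case the same comparison of equations shows that $u_{i_0}\equiv u_{i_0}^{x,\bar\lambda(x)}$ is impossible (the factor $(\bar\lambda(x)/|y-x|)^{(\alpha-1)\mu}$ is not identically $1$), so $\bar\lambda(x)=\infty$ for all $x$, each $u_i$ is constant by \cite[Lemma~11.2]{LZ}, and hence $\u\equiv 0$.
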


\begin{remark}\label{remark:main-g}  Theorem \ref{theorem:main-g} (ii) was already proved by O. Druet, E. Hebey and J. V\'etois \cite [Proposition 1.1]{DHV} via the method of moving spheres. Here we contain the result and the proof for the reader's convenience.  
\end{remark}

Next we classify the solutions in the punctured space, through the limiting energy levels or the Pohozaev invariants of the associated energy functional and the blowup analysis, which is standard in the framework of free boundary problems. For the upper critical case $\alpha = \frac{n+2}{n-2}$, we introduce a new Pohozaev invariant, which will play the central role. 

\begin{theorem}\label{theorem:main-s} Let $\u$ be a nonnegative solution of \eqref{eq:main} in $\R^n\setminus\{0\}$ with $1< \alpha \leq\frac{n+2}{n-2}$, and let $\Phi(r,\u)$ be as in \eqref{eq:Phi} for all $r>0$.
\begin{enumerate}[(i)]
\item If $1<\alpha\leq\frac{n}{n-2}$, then $\u$ is trivial.
\item If $\frac{n}{n-2} < \alpha < \frac{n+2}{n-2}$, then $\Phi(r,\u)$ converges as $r\ra 0$ and $r\ra\infty$, and 
\begin{equation}\label{eq:class-subcrit-Phi}
\{\Phi(0+,\u), \Phi(+\infty,\u)\} \subset\left\{-\frac{\alpha-1}{\alpha+1}\lambda^{\frac{\alpha+1}{\alpha-1}},0\right\}.
\end{equation}
\begin{enumerate}[(a)]
\item $\Phi(0+,\u) = 0$, if and only if $\u$ is trivial.
\item $\Phi(+\infty,\u) = -\frac{\alpha-1}{\alpha+1}\lambda^{\frac{\alpha+1}{\alpha-1}}$, if and only if $\u$ is homogeneous of degree $-\frac{2}{\alpha-1}$, hence of the form 
\begin{equation}\label{eq:u-s}
\u(x) = \lambda^{\frac{1}{\alpha-1}} |x|^{-\frac{2}{\alpha-1}} \ee,
\end{equation}
where $\lambda$ is given by \eqref{eq:lam-mu} and $\ee\in\R^m$ is a unit nonnegative vector. 
\end{enumerate}
\item If $\alpha = \frac{n+2}{n-2}$, then $\Phi_*(r,\u)$ as in \eqref{eq:Phi*} is well-defined for all $r>0$, and there are constants $\kappa(\u)$ and $\kappa_*(\u)$ such that $\kappa(\u) = \Phi(r,\u)$ and $\kappa_*(\u) = \Phi_*(r,\u)$ for all $r>0$. Moreover, 
\begin{equation}\label{eq:class-crit-kappa}
\kappa(\u) \geq -\frac{2}{n} \left(\frac{n-2}{2}\right)^n,
\end{equation}
and 
\begin{equation}\label{eq:class-crit-kappa*}
-\left(\frac{2}{n}\left(\frac{n-2}{2}\right)^n + \kappa(\u)\right) \left(\frac{n-2}{2}\right)^{n-2}\leq \kappa_*(\u) \leq 0,
\end{equation}
where the equalities of the lower bounds of both $\kappa(\u)$ and $\kappa_*(\u)$ hold only simultaneously. 
\begin{enumerate}[(a)]
\item $\kappa(\u) = \kappa_*(\u) = 0$ if and only if $\u$ has removable singularity at the origin, hence of the form \eqref{eq:u-g}. 
\item If $\kappa(\u)^2 + \kappa_*(\u)^2 > 0$, then $\u$ has non-removable singularity at the origin, and is rotationally symmetric. Moreover, the cylindrical transformation $\v$ as in \eqref{eq:cyl} satisfies \eqref{eq:|v|-poho}. 
\item $\kappa(\u)= -\frac{2}{n} (\frac{n-2}{2})^n$ and $\kappa_*(\u) = 0$ if and only if $\u$ is homogeneous of degree $-\frac{n-2}{2}$, hence of the form 
\begin{equation}\label{eq:u-s-2}
\u(x) = \left( \frac{n-2}{2} \right)^{\frac{n-2}{2}} |x|^{-\frac{n-2}{2}} \ee,
\end{equation}
where $\ee$ is a unit nonnegative vector.
\end{enumerate}
\end{enumerate}
\end{theorem}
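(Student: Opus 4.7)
My plan is to combine the scaling identity $\Phi(r,\u)=\Phi(1,\u_r)$ for $\u_r(x):=r^{2/(\alpha-1)}\u(rx)$ with monotonicity (respectively conservation, in the critical case) of $\Phi$, a blow-up/blow-down analysis, and the Liouville-type Theorem~\ref{theorem:main-g}. A weighted integration by parts on $B_r\setminus B_\rho$ gives $\tfrac{d}{dr}\Phi(r,\u)\ge 0$ for $1<\alpha<\tfrac{n+2}{n-2}$, with equality on an interval characterizing homogeneity of degree $-\tfrac{2}{\alpha-1}$; at $\alpha=\tfrac{n+2}{n-2}$ this inequality is the Pohozaev identity, so $\Phi$ is conserved, and the companion quantity $\Phi_*$ is conserved by the same mechanism using the cross-component conservation laws~\eqref{eq:2nd-poho-2}, thereby defining $\kappa(\u)$ and $\kappa_*(\u)$.

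For part~(i), when $1<\alpha\le\tfrac{n}{n-2}$ the exponent $\tfrac{2}{\alpha-1}\ge n-2$ places the singular profile outside the weak-integrability threshold for nonnegative distributional supersolutions, so a Br\'ezis--V\'eron style removable-singularity argument applied component-wise (each $u_i$ is superharmonic) extends $\u$ to a nonnegative $C^2$ solution on all of $\R^n$; Theorem~\ref{theorem:main-g}(i) then gives $\u\equiv 0$.

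For part~(ii), monotonicity of $\Phi$ yields the two limits, and standard doubling plus $C^{2,\gamma}$ interior estimates give $C^2_{\mathrm{loc}}(\R^n\setminus\{0\})$-compactness of $\{\u_r\}$ as $r\to 0$ and $r\to\infty$. Any subsequential blow-up or blow-down limit has $\Phi$ constant in $r$, hence is $-\tfrac{2}{\alpha-1}$-homogeneous; writing such a limit as $|x|^{-2/(\alpha-1)}\w(x/|x|)$ reduces to the sphere system
\[
\Delta_\theta\w-\lambda\w+|\w|^{\alpha-1}\w=0,\qquad \w\ge 0,
\]
whose only nontrivial nonnegative solutions in the subcritical range $\tfrac{n}{n-2}<\alpha<\tfrac{n+2}{n-2}$ are, by a standard Bidaut-V\'eron/Gidas--Spruck symmetry argument on $\Ss^{n-1}$, the constants $\lambda^{1/(\alpha-1)}\ee$. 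Their $\Phi$-values are exactly $0$ and $-\tfrac{\alpha-1}{\alpha+1}\lambda^{(\alpha+1)/(\alpha-1)}$, giving~\eqref{eq:class-subcrit-Phi}. Parts~(a) and~(b) follow by matching limits: $\Phi(0+,\u)=0$ forces $\Phi\equiv 0$ and identifies $\u$ with the trivial blow-down, while saturation of $\Phi(+\infty,\u)$ at the nontrivial level squeezes $\Phi$ to a constant, and the equality case of the monotonicity formula forces $\u$ itself to be $-\tfrac{2}{\alpha-1}$-homogeneous, i.e., of the form~\eqref{eq:u-s}.

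For part~(iii), when $\kappa=\kappa_*=0$ a blow-down produces a zero-energy limit, which must be trivial, and a Serrin/Br\'ezis--Lions removable-singularity estimate then extends $\u$ across the origin; Theorem~\ref{theorem:main-g}(ii) gives~\eqref{eq:u-g}. For~(b), rotational symmetry is obtained by the method of moving spheres adapted to the vector-valued setting (cf.~\cite{JLX, LZ}): the conformal invariance at the critical exponent lets one run the procedure from both $0$ and $\infty$, while the cross-component conserved quantities~\eqref{eq:2nd-poho-2} provide the bookkeeping needed to prevent premature stopping in any component. Once radial symmetry is known, \eqref{eq:main-cyl} becomes an autonomous ODE in $t$, and combining~\eqref{eq:2nd-poho-1}--\eqref{eq:2nd-poho-2} with the conservation of $\kappa$ yields~\eqref{eq:|v|-poho}. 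The bounds~\eqref{eq:class-crit-kappa}--\eqref{eq:class-crit-kappa*} arise from requiring the right-hand side of~\eqref{eq:|v|-poho} to remain nonnegative along a bounded orbit of $|\v|$, with joint saturation only at the equilibrium $|\v|\equiv(\tfrac{n-2}{2})^{(n-2)/2}$, which gives~(c). I expect the main obstacle to lie precisely in this moving-spheres step in~(iii)(b): for vector-valued targets the reflection procedure can close in some components while stalling in others, and the crux is to exploit the coupling through $|\u|^{\alpha-1}$ together with~\eqref{eq:2nd-poho-2} so that every component is forced to symmetrize simultaneously.
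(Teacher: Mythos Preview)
Your approach to parts~(ii) and (iii)(b)--(c) is broadly aligned with the paper's: monotonicity of $\Phi$, compactness of $\u_r$, homogeneity of blow-up/blow-down limits, classification on the sphere, and moving spheres for radial symmetry in the critical case. One minor difference: the paper classifies the sphere system not by a Bidaut-V\'eron/Gidas--Spruck argument but by first proving (its Lemma~\ref{lemma:rad}) that any nonnegative \emph{singular} solution on $\R^n\setminus\{0\}$ is radially symmetric, which forces a nontrivial homogeneous solution to be constant on spheres.

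There are, however, two genuine gaps.

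\textbf{Part~(i).} Your Br\'ezis--V\'eron removable-singularity argument cannot work as stated. For $1<\alpha\le\tfrac{n}{n-2}$ there \emph{do} exist nonnegative solutions in a punctured ball with non-removable singularity (cf.\ Theorem~\ref{theorem:main}~(iii)--(iv), where $|\u|\sim|x|^{2-n}$ or carries a log correction), so no purely local argument can remove the singularity at the origin. The triviality in~(i) is a \emph{global} phenomenon using both ends of $\R^n\setminus\{0\}$. The paper argues via the monotonicity formula: blow-ups and blow-downs of $\u$ are homogeneous of degree $-\tfrac{2}{\alpha-1}$ (Lemma~\ref{lemma:u-Phi}), and a homogeneous nonnegative solution solves $\Delta_\theta\v-\lambda\v+|\v|^{\alpha-1}\v=0$ on $\Ss^{n-1}$ with $\lambda\le 0$; each component is then superharmonic on a compact manifold, hence constant, and the algebraic constraint forces $\v\equiv 0$ (Lemma~\ref{lemma:class-sub}~(i)). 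This pins $\Phi(0+,\u)=\Phi(+\infty,\u)=0$, so $\Phi$ is constant in $r$, $\u$ itself is homogeneous, and therefore trivial.

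\textbf{Part~(iii)(a).} In the critical case $\Phi$ is conserved rather than strictly monotone, so blow-up or blow-down limits need not be homogeneous; a limit with $\kappa=\kappa_*=0$ could a priori be a bubble~\eqref{eq:u-g} or any other zero-invariant solution on $\R^n\setminus\{0\}$, so ``zero-energy limit must be trivial'' is circular (it is exactly what you are trying to prove). Moreover, a blow-\emph{down} says nothing about removability at the origin. The paper instead splits directly on radial symmetry: if $\u$ is not radially symmetric, Lemma~\ref{lemma:rad} already gives removability; if $\u$ is radially symmetric and $\kappa=\kappa_*=0$, then~\eqref{eq:|v|-poho} reduces to the scalar Fowler relation~\eqref{eq:kappa-cyl-rad-reg} for $|\v|$, and the classical ODE analysis of \cite{F,CGS} yields removability.
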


\begin{remark}\label{remark:Phi*-const3}
Due to the characterization \eqref{eq:2nd-poho} of the second Pohozaev invariant, one can find a rotationally symmetric solution to \eqref{eq:main-cyl} for which $\kappa_* \neq 0$, even for the case $m=2$. The following example was raised to us by an anonymous referee. Given a pair $(\kappa,\kappa_*)$ of admissible constants, satisfying the bounds \eqref{eq:class-crit-kappa} and \eqref{eq:class-crit-kappa*}, such that the equation
\begin{equation}\label{eq:Phi*-const3-1}
\frac{(n-2)^2}{4} \rho^2 - \frac{n-2}{n} \rho^{\frac{2n}{n-2}} + \kappa + \frac{\kappa_*}{\rho^2} = 0
\end{equation}
has two distinct positive roots. Following the classical work \cite{F} by R.H. Fowler, one obtains a positive, non-constant, periodic solution $\rho$ to 
\begin{equation*}
\left(\frac{d\rho}{dt}\right)^2 = \frac{(n-2)^2}{4} \rho^2 - \frac{n-2}{n} \rho^{\frac{2n}{n-2}} + \kappa + \frac{\kappa_*}{\rho^2}.
\end{equation*}
Then one can easily verify that the mapping 
\begin{equation}\label{eq:Phi*-const3-ex}
\v (t) = \rho(t) \left(  \cos \left( \sqrt{-\kappa_*} \int_{t_0}^t \frac{ds}{\rho^2(s)}\right),  \sin \left( \sqrt{-\kappa_*} \int_{t_0}^t \frac{ds}{\rho^2(s)}\right) \right)
\end{equation}
solves the system \eqref{eq:main-cyl} with $\alpha = \frac{n+2}{n-2}$, $n\geq 3$ and $m = 2$, having $\kappa$ and $\kappa_*$ as the first and respectively the second Pohozaev invariant. 

Note that $\rho$ oscillates between two positive values, which implies that the corresponding solution $\u$ under the reverse cylindrical transformation \eqref{eq:cyl} has non-removable singularity at the origin. It should be stressted out that one can also have two distinct positive roots to \eqref{eq:Phi*-const3-1} even when $\kappa=0$, provided that $\kappa_*<0$ with $|\kappa_*|$ small. This shows the existence of singular solutions having trivial, standard Pohozaev invariant. 
\end{remark}

The subsequent theorems are concerned with the local solutions in the punctured unit ball. First we deduce the asymptotic radial symmetry, by the combination of the method of moving spheres and moving plane; a similar argument appears in \cite[Theorem 1.2]{CJSX}. This result is particularly important to define the second Pohozaev invariant for local solutions. 

\begin{theorem}\label{theorem:asym-rad} Let $\u$ be a nonnegative solution of \eqref{eq:main} in $B_1\setminus\{0\}$ with $1<\alpha\leq\frac{n+2}{n-2}$. Then 
\begin{equation}\label{eq:asym-rad}
\u(x) = (1+O(|x|)) \bar\u(|x|)\quad\text{as }x\ra 0,
\end{equation}
where $\bar\u(r)$ is the average of $\u$ over $\partial B_r$. 
\end{theorem}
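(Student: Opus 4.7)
I would follow the blueprint for asymptotic radial symmetry at an isolated singularity initiated by Caffarelli-Gidas-Spruck \cite{CGS} for the scalar equation and adapted to systems, in the style of \cite{CJSX}: combine the method of moving spheres (to produce a sharp a priori upper bound near the singularity) with the method of moving planes on a Kelvin-transformed solution (to produce the asymptotic symmetry with $O(|x|)$ error). The estimate \eqref{eq:asym-rad} is then a quantitative reformulation of the fact that, after a Kelvin inversion around any point on a sphere $\partial B_r$ close to the origin, the moving plane can be pushed through almost all of the remaining half-space.

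\textbf{Step 1 (upper bound).} Run the moving spheres procedure componentwise for $\u$ on $B_1\setminus\{0\}$, using the Kelvin transform $\u_{x_0,\rho}(x) = (\rho/|x-x_0|)^{2/(\alpha-1)}\u(x_0 + \rho^2(x-x_0)/|x-x_0|^2)$, tuned to the scaling of \eqref{eq:main}. By the same argument that proves Theorem \ref{theorem:main-g}, the procedure either stops or gives a Liouville-type contradiction; in the punctured ball this yields the sharp singular upper bound
\begin{equation*}
|\u(x)| \leq C |x|^{-\frac{2}{\alpha-1}} \quad\text{in } B_{1/2}\setminus\{0\}.
\end{equation*}
Once this bound is in hand, each component $u_i$ solves $-\Delta u_i = c(x) u_i$ with $c(x) = |\u(x)|^{\alpha-1}$ satisfying $|c(x)|\leq C|x|^{-2}$, so the standard Harnack inequality on annuli gives $\sup_{B_{2r}\setminus B_{r/2}} u_i \leq C\inf_{B_{2r}\setminus B_{r/2}} u_i$ with $C$ independent of $r$.

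\textbf{Step 2 (moving planes near the singularity).} Fix $r>0$ small and a point $y\in\partial B_r$. Perform a Kelvin inversion of $\u$ at a point $y^*$ antipodal to $y$ across $B_r$, and run the method of moving planes on the inverted solution along any direction. Thanks to the decay from Step 1 and the Harnack estimate from Step 2, the standard comparison between $\u$ and its reflection $\u_\lambda$ can be initiated and continued; the obstruction to the plane reaching all the way to $y$ is of order $O(r)$ rather than $O(1)$. This produces, for each direction $\xi \in \Ss^{n-1}$, the one-sided inequality
\begin{equation*}
u_i(x) - u_i(x') \leq C\, |x-x'|\cdot\bar u_i(r),\quad |x|=|x'|=r,\ x-x'\parallel\xi,
\end{equation*}
with $C$ independent of $r$, for every component. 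Because the argument is symmetric in $\xi$ and $-\xi$, both inequalities hold, yielding
\begin{equation*}
\osc_{\partial B_r} u_i \leq C r\, \bar u_i(r).
\end{equation*}

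\textbf{Step 3 (conclusion).} Integrating this oscillation estimate gives $|u_i(x)-\bar u_i(|x|)|\leq Cr\,\bar u_i(|x|)$ for $|x|=r$, which is precisely the componentwise form of \eqref{eq:asym-rad}.

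\textbf{Main obstacle.} The delicate point is to carry the moving plane/sphere procedure through componentwise: the system is only weakly coupled via $|\u|$, and individual components $u_i$ need not be positive on the punctured ball (some may vanish on whole rays). The reflection comparison $u_{i,\lambda}\leq u_i$ must be shown to propagate despite the possible degeneracy of some components, which requires exploiting the cooperative structure $-\Delta(u_i - u_{i,\lambda}) = |\u|^{\alpha-1}u_i - |\u_\lambda|^{\alpha-1} u_{i,\lambda}$ and handling the indices where $u_i \equiv 0$ separately. A second technical point is that Step 1, as in Theorem \ref{theorem:main-g}, requires the procedure to succeed for at least one component in order to close on all; a suitable selection and the Harnack comparison between components (valid in the regions where they do not vanish identically) permit this.
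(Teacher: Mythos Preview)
Your overall strategy---moving spheres/planes together with the Caffarelli--Gidas--Spruck asymptotic symmetry machinery---is the same as the paper's, and your identification of the cooperative coupling as the chief obstacle is accurate. Two points deserve correction or sharpening.

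First, the Kelvin transform in Step~1 is written with the wrong exponent. The transform $(\rho/|x-x_0|)^{2/(\alpha-1)}\u(\cdots)$ is \emph{not} adapted to the Laplacian for subcritical $\alpha$; the exponent $\tfrac{2}{\alpha-1}$ governs dilations, not inversions. The correct object is the standard Kelvin transform $\u_{z,r}^*(y)=(r/|y-z|)^{n-2}\u(z+r^2(y-z)/|y-z|^2)$, which satisfies $-\Delta\u_{z,r}^*=(r/|y-z|)^{(\alpha-1)\mu}|\u_{z,r}^*|^{\alpha-1}\u_{z,r}^*$ with $\mu\geq 0$ for $1<\alpha\leq\tfrac{n+2}{n-2}$. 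The sign of $\mu$ is exactly what makes the moving-sphere comparison work in the subcritical range, and with your exponent this structure is lost.

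Second, the paper's route is more economical than your Steps~1--2. Rather than first extracting an upper bound and then running a separate moving-plane argument at an antipodal inversion, the paper runs moving spheres directly in $B_1\setminus\{0\}$ to obtain $(u_i)_{z,r}^*\leq u_i$ for all $z\in B_{\varepsilon/2}\setminus\{0\}$ and $0<r\leq|z|$, and then shows by an explicit algebraic identity that this family of sphere comparisons is \emph{equivalent} to a moving-plane comparison at infinity for the single Kelvin transform $u_i^*=(u_i)_{0,1}^*$: namely $u_i^*(y)\leq u_i^*(y_a)$ for all reflections across hyperplanes $\{x\cdot e=a\}$ with $a>1/\varepsilon$. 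This places one exactly in the hypotheses of \cite[Theorem~6.1 and Corollary~6.2]{CGS}, which deliver the $O(|x|)$ oscillation bound immediately. Your outline would reach the same conclusion, but through a longer and vaguer Step~2; the paper's sphere-to-plane conversion is the efficient shortcut.
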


Utilizing the classification of solutions in the punctured space and the asymptotic radial symmetry, we obtain the exact asymptotic behavior of local solutions around the singularity. 

\begin{theorem}\label{theorem:main} Let $\u$ be a nonnegative solution of \eqref{eq:main} in $B_1\setminus\{0\}$ with $1<\alpha\leq \frac{n+2}{n-2}$. Then either $\u$ has a removable singularity at the origin, or the following alternatives hold. 
\begin{enumerate}[(i)]
\item If $\frac{n}{n-2}<\alpha <\frac{n+2}{n-2}$, then 
\begin{equation}\label{eq:asym+sub}
|\u(x)| = (1 + o(1)) \lambda^{\frac{1}{\alpha-1}} |x|^{-\frac{2}{\alpha-1}} \quad\text{as }x\ra 0,
\end{equation}
where $\lambda$ is given as in \eqref{eq:lam-mu}.
\item If $\alpha= \frac{n+2}{n-2}$, then there are $c,C>0$ such that  
\begin{equation}\label{eq:asym+crit}
c|x|^{-\frac{n-2}{2}} \leq |\u(x)| \leq C|x|^{-\frac{n-2}{2}} \quad\text{as }x\ra 0, 
\end{equation}
where $c$ depends on $\u$ while $C$ is determined by $n$ and $m$ only.
\item If $1<\alpha<\frac{n}{n-2}$, then there are $c,C>0$ such that 
\begin{equation}\label{eq:asym-}
c|x|^{2-n} \leq |\u(x)| \leq C |x|^{2-n}\quad\text{as } x\ra 0,
\end{equation}
where both $c$ and $C$ depend on $\u$. 
\item If $\alpha = \frac{n}{n-2}$, then 
\begin{equation}\label{eq:asym0}
|\u(x)|  = (1+o(1)) \left(\frac{(n-2)^2}{2|x|^2(-\log |x|)}\right)^{\frac{n-2}{2}}\quad\text{as }x\ra 0.
\end{equation}
\end{enumerate}
\end{theorem}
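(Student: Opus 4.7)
The plan is to combine the global classification on the punctured space (Theorem \ref{theorem:main-s}), the asymptotic radial symmetry (Theorem \ref{theorem:asym-rad}), and a rescaling/blowup argument tailored to each exponent regime. Throughout I may assume the singularity is non-removable, so in particular $\limsup_{x\to 0}|\u(x)|=+\infty$, and by Theorem \ref{theorem:asym-rad} it suffices to track $|\bar\u(r)|$.

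For cases (i) and (ii), introduce the scale-invariant rescalings $\u_r(x):=r^{2/(\alpha-1)}\u(rx)$, which again solve \eqref{eq:main}. Standard interior estimates yield $C^2_{\mathrm{loc}}(\R^n\setminus\{0\})$ subsequential limits $\u_0$ to which Theorem \ref{theorem:main-s} applies. In the subcritical regime (i), Theorem \ref{theorem:main-s}(ii) forces every such $\u_0$ to be either trivial or the homogeneous profile \eqref{eq:u-s}; non-removability excludes the trivial alternative, the monotonicity of $\Phi(\cdot,\u)$ pins down the energy level of the limit along the full sequence $r\to 0$, and Theorem \ref{theorem:asym-rad} eliminates subsequential dependence, giving \eqref{eq:asym+sub}. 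In the upper critical case (ii) the limit $\u_0$ may be non-homogeneous, but by Theorem \ref{theorem:main-s}(iii)(b) its cylindrical transform $\v$ solves \eqref{eq:|v|-poho}; since the associated right-hand side is a fixed polynomial in $|\v|$ with invariants $\kappa(\u),\kappa_*(\u)$ obeying \eqref{eq:class-crit-kappa}--\eqref{eq:class-crit-kappa*}, the modulus $|\v|$ is trapped between the two positive roots of that polynomial. The upper root is controlled uniformly in terms of $n,m$ via the bounds on $(\kappa,\kappa_*)$, while the lower root is strictly positive by non-removability and Theorem \ref{theorem:main-s}(iii)(a), yielding \eqref{eq:asym+crit}.

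For the lower cases (iii) and (iv), the scaling above is inconsistent with Theorem \ref{theorem:main-s}(i) (the only global solution is trivial), so I turn to potential theory. In case (iii), Theorem \ref{theorem:asym-rad} combined with an a priori bound $|\u|\leq C|x|^{-2/(\alpha-1)}$ obtained from a standard blowup--Liouville argument against Theorem \ref{theorem:main-g}(i) gives $|\u|^\alpha=O(|x|^{-\alpha(n-2)})$ with $\alpha(n-2)<n$, so $|\u|^{\alpha-1}\u\in L^1_{\mathrm{loc}}(B_1)$ and $-\Delta\u=|\u|^{\alpha-1}\u$ holds distributionally across the origin. Writing $\u=\h+\mathcal{N}$, where $\h$ is harmonic across the origin and $\mathcal{N}$ is the Newtonian potential of $|\u|^{\alpha-1}\u$, the decay of $\mathcal{N}$ is strictly weaker than $|x|^{2-n}$ because of the subcritical exponent $\alpha(n-2)<n$; thus the leading behavior of $\u$ comes from a fundamental-solution term in $\h$, and non-removability makes its coefficient nonzero, producing \eqref{eq:asym-}.

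The main obstacle is case (iv), where $\alpha(n-2)=n$ sits exactly at the integrability threshold, so the potential-theoretic argument above degenerates and the simple blowup scale does not converge. The plan is to pass to the radial ODE satisfied by $h(r):=|\bar\u(r)|$ via Theorem \ref{theorem:asym-rad}, namely $-h''-(n-1)r^{-1}h'=h^{n/(n-2)}(1+o(1))$, and substitute the ansatz $h(r)=(r^2(-\log r))^{-(n-2)/2}w(-\log r)$. The resulting asymptotically autonomous ODE for $w$ admits the unique positive equilibrium $((n-2)^2/2)^{(n-2)/2}$; proving that every solution with $h(r)\to\infty$ converges to this equilibrium requires a careful phase-plane analysis together with a bootstrapped $L^p$ estimate to justify discarding the $o(1)$ remainder, and yields \eqref{eq:asym0}. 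A secondary difficulty throughout is propagating subsequential blowup information to the full $r\to 0$ asymptotics, for which the monotonicity of $\Phi(\cdot,\u)$ and the asymptotic radial symmetry play a decisive role.
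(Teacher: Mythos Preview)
Your outline for (i)--(iii) follows the paper's strategy (blowup, classification via Theorem \ref{theorem:main-s}, removability criterion), but two details need tightening. In (i)--(ii), ``non-removability excludes the trivial alternative'' is not automatic: the implication $|\u(x)|=o(|x|^{-2/(\alpha-1)})\Rightarrow$ removable requires a barrier argument (comparing each $u_i$ to a supersolution $(C|x|^\delta+\varepsilon)|x|^{-2/(\alpha-1)}$) followed by Serrin's theorem; this is the content of the paper's Lemmas \ref{lemma:remv+sub} and \ref{lemma:remv+crit}, which you have omitted. In (iii), the pointwise bound $|\u|\le C|x|^{-2/(\alpha-1)}$ does \emph{not} give $|\u|^\alpha=O(|x|^{-\alpha(n-2)})$: it only gives $O(|x|^{-2\alpha/(\alpha-1)})$, and since $\alpha<\frac{n}{n-2}$ one has $\frac{2\alpha}{\alpha-1}>n$, so this is not locally integrable. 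The paper instead uses the integral estimate $\u\in L^\alpha$ from Brezis--Lions (Lemma \ref{lemma:dist}) to get $|\u|^{\alpha-1}\u\in L^1$, together with a Harnack-based bootstrap to reach $|\u|\le|x|^{-q}$ for $q$ close to $n-2$ before the Newtonian decomposition (Lemma \ref{lemma:sup-}).

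The substantive gap is in (iv). The equation you write for $h:=|\bar\u|$ is wrong for vector-valued $\bar\u$: differentiating twice produces an extra nonnegative term $(|\bar\u'|^2-(h')^2)/h$---precisely the discrepancy \eqref{eq:2nd-poho-1} that generates the second invariant $\kappa_*$ in the upper critical case---and asymptotic radial symmetry says nothing about its size relative to $h^{n/(n-2)}$. A scalar phase-plane argument cannot proceed until this cross-term is controlled, and that control is essentially the whole difficulty. The paper sidesteps the issue by never passing to a scalar ODE: it works with the full vector $\vphi$ in the refined cylindrical variables \eqref{eq:cyl-n}, shows that the energy $E(t,\vphi)$ in \eqref{eq:E} is eventually monotone, and then---following Aviles---differentiates the system once more and uses a second monotone functional $J(t,\partial_t\vphi)$ (equation \eqref{eq:J}) to prove $\int_{\Ss^{n-1}} t|\partial_t\vphi|^2\,d\theta\to 0$ (Lemma \ref{lemma:w-dt-int}). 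This forces the limiting energy level into the two-point set $\{-\frac{1}{n-1}(\frac{(n-2)^2}{2})^{n-1},0\}$ and hence $|\vphi|\to(\frac{(n-2)^2}{2})^{(n-2)/2}$ in the non-removable case.
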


The paper is organized as follows. In the next section, we present the balanced-energy-type monotonicity formula and introduce the second Pohozaev invariants for the upper critical case. In Section \ref{section:global}, we classify the solutions of \eqref{eq:main} on the whole space, proving Theorem \ref{theorem:main-g}. In Section \ref{section:sing}, we investigate the properties of the solutions on the punctured space, and present the proof of Theorem \ref{theorem:main-s}. Section \ref{section:upper} is devoted to the {\it a priori} estimates for the local solutions, which will play one of the key roles in the subsequent analysis, while we prove the asymptotic radial symmetry, Theorem \ref{theorem:asym-rad}, in Section \ref{section:asym-rad}. Finally, we derive the exact asymptotic behavior of the local solutions of \eqref{eq:main} for all $1<\alpha\leq\frac{n+2}{n-2}$, in Section \ref{section:asym}. The proof of parts (i)-(iv) in Theorem \ref{theorem:main} are presented in the end of Section \ref{subsection:asym+sub}-\ref{subsection:asym0}, respectively.


\subsection{Notation and Terminology}\label{subsection:notation}

We say that $\u$ has a removable singularity at the origin, provided that $|\u|$ is bounded in any neighborhood of origin. We say that $\u$ has a non-removable singularity at the origin, if $\u$ does not have a removable singularity at the origin, that is, $|\u|$ (but not necessarily all the components of $\u$) is unbounded in any neighborhood of the origin. 

By $B_r(z)\subset\R^n$ ($n\geq 3$) we denote the ball of radius $r$ centered at $z$, and in case $z=0$, we shall simply write it by $B_r$. In addition, $\omega_n$ is the volume of the unit ball $B_1\subset\R^n$. Given an open set $\Omega\subset\R^n$, we shall denote by $\partial \Omega$ the topological boundary of $\Omega$. Moreover, when $\partial \Omega$ is $C^1$, $\nu$ denotes the unit normal on $\partial \Omega$ pointing towards the origin. $\nabla_\sigma$ will denote the tangential derivative on $\partial\Omega$. 

$\Ss^{n-1}$ is the unit sphere in $\R^n$, and is also identified with $\partial B_1$. Note that $n\omega_n$ is the area of $\Ss^{n-1}$. By $\nabla_\theta$ and $\Delta_\theta$ we shall write the derivative and, respectively, the Laplace-Beltrami operator on $\Ss^{n-1}$. 

Any vector in the target space $\R^m$ ($m\geq 1$) is written in bold. Given a vector $\a\in\R^m$, we denote by $a_i$ the $i$-th component of $\a$. By $|\a|$ we denote its $l^2$-norm, i.e., $|\a| = (\sum_{i=1}^m a_i^2)^{1/2}$. By $\a\geq 0$ (resp., $\a \leq 0$) or by saying that $\a$ is nonnegative (resp., nonpositive) we indicate that $a_i \geq 0$ (resp., $a_i \leq 0$) for each $1\leq i\leq m$. For two vectors $\a$ and $\b$, $\a\cdot\b = \sum_{i=1}^m a_ib_i$. Also given two vectorial $C^1$-functions $\f$ and $\g$, $\nabla\f: \nabla\g = \sum_{i=1}^m (\nabla f_i)\cdot (\nabla g_i)$. 

The constants $C,C_0,C_1,C_2,\cdots$ will always be positive, generic, determined by $n$, $m$ and $\alpha$ only, unless otherwise stated. We shall also call these constants universal. In addition, we shall fix $\lambda$, $\mu$ and $\bar\lambda$ throughout the paper as in \eqref{eq:lam-mu} and \begin{equation}\label{eq:lamb}
\bar\lambda = \frac{\alpha-1}{\alpha+1}\lambda^{\frac{\alpha+1}{\alpha-1}}.
\end{equation}


\section{Monotonicity Formula and Pohozaev Invariant}\label{section:energy}

We consider the balanced-energy-type functional
\begin{equation}\label{eq:Phi}
\begin{split}
\Phi(r,\u) &= \frac{r^{\mu+1}}{n\omega_n} \int_{\partial B_r}  \left( \left| \frac{\partial \u}{\partial \nu} - \frac{2}{(\alpha-1)r} \u \right|^2 - |\nabla_\sigma \u|^2 \right) d\sigma \\
&\quad + \frac{2r^{\mu+1}}{(\alpha+1)n\omega_n}\int_{\partial B_r} |\u|^{\alpha+1} \,d\sigma - \frac{\lambda r^{\mu-1}}{n\omega_n}  \int_{\partial B_r}  |\u|^2 d\sigma,
\end{split}
\end{equation}
where $\lambda$ and $\mu$ are given as in \eqref{eq:lam-mu}. Note that $\lambda \geq 0$ if and only if $\alpha \geq \frac{n}{n-2}$, and $\mu \geq 0$ if and only if $1<\alpha \leq \frac{n+2}{n-2}$. 

Let us introduce the scaling function
\begin{equation}\label{eq:ur}
\u_r(x) = r^{\frac{2}{\alpha-1}} \u(rx).
\end{equation}
Note that the problem \eqref{eq:main} is preserved under this scaling. That is, if $\u$ solves \eqref{eq:main} in $B_R\setminus\{0\}$ then $\u_r$ solves \eqref{eq:main} in $B_{R/r}\setminus\{0\}$. In terms of $\u_r$, one may easily observe that $\Phi$ satisfies the following scaling relation
\begin{equation}\label{eq:Phi-scale}
\Phi(rs,\u) = \Phi(s,\u_r),
\end{equation}
for any $r,s>0$.

Recall from \eqref{eq:cyl} the cylindrical transformation $\v$, in terms of which $\Phi$ can be represented as 
\begin{equation}\label{eq:Phi-cyl}
\Phi(r,\u) = \Psi(-\log r,\v),
\end{equation}
where $\Psi(t,\v)$ is given by 
\begin{equation}\label{eq:Psi}
\Psi(t,\v) = \frac{1}{n\omega_n}\int_{\Ss^{n-1}} \left(|\partial_t\v|^2 - |\nabla_\theta \v|^2 - \lambda |\v|^2 + \frac{2}{\alpha+1} |\v|^{\alpha+1} \right) \,d\theta.
\end{equation}

\begin{proposition}\label{proposition:Phi-monot} Let $\u$ be a nonnegative solution of \eqref{eq:main} in $B_R\setminus\{0\}$ with $1<\alpha\leq\frac{n+2}{n-2}$, and let $\Phi(r,\u)$ be as in \eqref{eq:Phi}. One has
\begin{equation}\label{eq:Phi'}
\frac{d}{dr}\Phi(r,\u) = \frac{2\mu r^\mu}{n\omega_n} \int_{\partial B_r} \left| \frac{\partial \u}{\partial \nu} - \frac{2}{(\alpha-1)r} \u \right|^2 \,d\sigma,
\end{equation}
where $\mu$ is given as in \eqref{eq:lam-mu}. In particular, the following are true. 
\begin{enumerate}[(i)]
\item If $1<\alpha<\frac{n+2}{n-2}$, then $\Phi(r,\u)$ is nondecreasing for $0<r<R$. Moreover, $\Phi(r,\u)$ is constant for $r_1< r< r_2$, if and only if $\u$ is homogeneous of degree $-\frac{2}{\alpha-1}$ in $B_{r_2}\setminus \bar{B}_{r_1}$, i.e.,
\begin{equation}\label{eq:u-hom}
\u(x) = |x|^{-\frac{2}{\alpha-1}} \u\left( \frac{x}{|x|}\right)\quad\text{in }B_{r_2}\setminus \bar{B}_{r_1}.
\end{equation}
\item If $\alpha = \frac{n+2}{n-2}$, then $\Phi(r,\u)$ is constant for $0<r<R$.
\end{enumerate}
\end{proposition}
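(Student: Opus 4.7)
The plan is to pass through the cylindrical variables and work with $\Psi(t,\v)$ from \eqref{eq:Psi} instead of differentiating $\Phi(r,\u)$ directly on $\partial B_r$. By \eqref{eq:Phi-cyl} the chain rule gives $\frac{d}{dr}\Phi(r,\u)=-\frac{1}{r}\,\frac{d\Psi}{dt}(-\log r,\v)$, so the whole computation reduces to differentiating the sphere integral $\Psi$, after which translating back into the surface form on $\partial B_r$ yields \eqref{eq:Phi'}; parts (i) and (ii) then fall out from the sign of $\mu$.

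First I would differentiate $\Psi$ under the integral, obtaining a sum of four terms, and integrate by parts on the closed manifold $\Ss^{n-1}$ to replace $-\nabla_\theta\v:\nabla_\theta\partial_t\v$ by $\Delta_\theta\v\cdot\partial_t\v$. Every remaining integrand is then $\partial_t\v$ dotted against $\partial_{tt}\v+\Delta_\theta\v-\lambda\v+|\v|^{\alpha-1}\v$, which the cylindrical PDE \eqref{eq:main-cyl} identifies with $-\mu\,\partial_t\v$. Hence
\begin{equation*}
\tfrac{d\Psi}{dt}(t,\v) \;=\; -\tfrac{2\mu}{n\omega_n}\int_{\Ss^{n-1}} |\partial_t\v|^2\,d\theta.
\end{equation*}
To transcribe this into \eqref{eq:Phi'}, I would differentiate \eqref{eq:cyl} along the inward normal $\nu=-x/|x|$ on $\partial B_r$ to obtain $\partial_t\v(-\log r,x/|x|)=r^{\frac{2}{\alpha-1}+1}\bigl(\frac{\partial\u}{\partial\nu}-\frac{2}{(\alpha-1)r}\u\bigr)(x)$, combine with $d\theta=r^{1-n}\,d\sigma$ and the chain rule, and collect the $r$-powers; since $\mu=\frac{4}{\alpha-1}-n+2$, what survives after the factor $1/r$ from the chain rule is exactly $r^\mu$.

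For part (i), $1<\alpha<\frac{n+2}{n-2}$ gives $\mu>0$, whence $\Phi$ is nondecreasing; constancy on $(r_1,r_2)$ forces $\frac{\partial\u}{\partial\nu}=\frac{2}{(\alpha-1)r}\u$ on each $\partial B_r$ in that range, which is equivalent to $\partial_t\v\equiv 0$ on the corresponding cylinder, i.e.\ $\v$ is $t$-independent, and \eqref{eq:cyl} then translates this into \eqref{eq:u-hom}. For part (ii), $\alpha=\frac{n+2}{n-2}$ makes $\mu=0$, so \eqref{eq:Phi'} forces $\frac{d\Phi}{dr}\equiv 0$. The only thing requiring care is bookkeeping: the sign produced by $\nu$ pointing towards the origin, and the exact cancellation of $r$-exponents when translating between $\Ss^{n-1}$ and $\partial B_r$ (which is a manifestation of the scale invariance \eqref{eq:Phi-scale} already recorded). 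Beyond that, no genuine obstacle is expected, since the integration by parts on $\Ss^{n-1}$ produces no boundary terms.
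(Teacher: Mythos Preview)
Your proposal is correct and follows essentially the same route as the paper's proof: both pass to the cylindrical representation $\Psi(t,\v)$, differentiate under the integral, integrate by parts on $\Ss^{n-1}$, and invoke \eqref{eq:main-cyl} to reduce the integrand to $-\mu|\partial_t\v|^2$, then translate back to $\partial B_r$. You spell out the bookkeeping of the translation (the formula for $\partial_t\v$ in terms of $\partial\u/\partial\nu$ and the $r$-exponent count) more explicitly than the paper does, but the argument is otherwise identical.
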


\begin{proof} The computation is easy if one chooses the cylindrical coordinate. Since \eqref{eq:Phi-cyl} holds with $t=-\log r$, we have
\begin{equation*}
\begin{split}
r \dot\Phi(r,\u) &= - \Psi'(t,\v) \\
&= -\frac{2}{n\omega_n}\int_{\Ss^{n-1}} (( \partial_{tt}\v  - \lambda\v + |\v|^{\alpha-1}\v)\cdot \partial_t\v - \nabla _\theta \v: \nabla_\theta\partial_t\v )\,d\theta\\
&= -\frac{2}{n\omega_n}\int_{\Ss^{n-1}} (\partial_{tt}\v + \Delta_\theta\v -\lambda\v + |\v|^{\alpha-1}\v)\cdot\partial_t\v \,d\theta\\
&= \frac{2\mu}{n\omega_n} \int_{\Ss^{n-1}} |\partial_t\v|^2\,d\theta,
\end{split}
\end{equation*}
where $\dot\Phi$ and $\Psi'$ denote $d\Phi/dr$ and respectively $d\Psi/dt$, and the right side is evaluated at $t=-\log r$. In addition, when deriving the last equality we used \eqref{eq:main-cyl}. Rephrasing the rightmost side in terms of $\u$, we arrive at \eqref{eq:Phi'}. 

The assertion on the monotonicity of $\Phi$ is now clear from \eqref{eq:Phi'}. On the other hand, the assertion on the homogeneity can be shown as follows. We see that if $\alpha\neq \frac{n+2}{n-2}$, then one has $\mu\neq 0$. Hence, the assumption that $\Phi(r,\u)$ being constant for $r_1<r<r_2$ along with \eqref{eq:Phi'} yields that for any $r_1<r<r_2$, 
\begin{equation*}
\frac{\partial \u}{\partial \nu} = \frac{2}{(\alpha-1)r} \u \quad\text{on }\partial B_r,
\end{equation*}
where $\nu$ is the unit normal pointing towards the origin. Thus, $\u$ is homogeneous of degree $-\frac{2}{\alpha-1}$ in $B_{r_2}\setminus \bar{B}_{r_1}$. 
\end{proof}

\begin{remark}\label{remark:Phi-monot} As a matter of fact, \eqref{eq:Phi'} holds for $\alpha>\frac{n+2}{n-2}$, and hence $\Phi(r,\u)$ is nonincreasing in this case, since $\mu<0$ for $\alpha>\frac{n+2}{n-2}$. 
\end{remark}

\begin{remark}\label{remark:Phi-monot1}
For the case $\alpha = \frac{n+2}{n-2}$, we obtain from Proposition \ref{proposition:Phi-monot} (ii) a constant $\kappa(\u)$ such that 
\begin{equation}\label{eq:kappa}
\kappa(\u) = \Phi(r,\u),
\end{equation}
for any $0<r<R$. Since there is a one-to-one correspondence between the nonnegative solutions $\u$ of \eqref{eq:main} and $\v$ of \eqref{eq:main-cyl} via the cylindrical transform \eqref{eq:cyl}, we shall write $\kappa(\u)$ by $\kappa(\v)$ as well. In view of \eqref{eq:Phi-cyl}, it is clear that
\begin{equation}\label{eq:kappa-cyl}
\kappa(\v) = \Psi(t,\v),
\end{equation}
for any $t>-\log R$. We shall call $\kappa$ the first Pohozaev invariant.  
\end{remark}

Let us construct the second Pohozaev invariant in a general setting, that is without rotational symmetry. For $\alpha = \frac{n+2}{n-2}$, let us define, formally for the moment, the quantity  
\begin{equation}\label{eq:Phi*}
\begin{split}
\Phi_*(r,\u) &= \frac{1}{4}(r\dot{f}(r,\u))^2 - \frac{(n-2)^2}{4} f(r,\u)^2 - \kappa(\u) f(r,\u) \\
&\quad + \frac{n-2}{n}f(r,\u)^{\frac{2n-2}{n-2}} - 2\int_0^r \left(\frac{\rho}{n\omega_n}\int_{\partial B_\rho} |\nabla_\sigma \u|^2 \,d\sigma\right)\dot{f}(\rho,\u) \,d\rho \\
&\quad + \frac{2n-2}{n}\int_0^r \left(\frac{\rho}{n\omega_n}\int_{\partial B_\rho}  |\u|^{\frac{2n}{n-2}}\,d\sigma - f(\rho,\u)^{\frac{n}{n-2}}\right) \dot{f}(\rho,\u)\,d\rho,
\end{split} 
\end{equation}
where $\dot{f}$ denotes $df/dr$, and  
\begin{equation}\label{eq:f}
f(r,\u) = \frac{1}{n\omega_n r} \int_{\partial B_r} |\u|^2\,d\sigma.
\end{equation}

Notice that $\Phi_*(r,\u)$ is well-defined only if the last two double integrals on the right side are finite. Moreover, once $\Phi_*(r,\u)$ becomes well-defined, we may also deduce from
\begin{equation}\label{eq:rf'}
r \dot{f}(r,\u) = - \frac{2}{n\omega_n}\int_{\partial B_r} \u\cdot \left( \frac{\partial \u}{\partial \nu} - \frac{n-2}{2r} \u\right)\,d\sigma
\end{equation}
the following scaling relation of $\Phi_*$, 
\begin{equation}\label{eq:Phi*-scale}
\Phi_*(rs,\u) = \Phi_*(s,\u_r),
\end{equation}
which holds for any $r,s>0$. On the other hand, in terms of the cylindrical transformation $\v$, one has
\begin{equation}\label{eq:Phi*-cyl}
\Phi_*(r,\u) = \Psi_*(-\log r,\v),
\end{equation}
where $\Psi_*(t,\v)$ is given by
\begin{equation}\label{eq:Psi*}
\begin{split}
\Psi_*(t,\v) &= \frac{1}{4}(g'(t,\v))^2 - \frac{(n-2)^2}{4} g(t,\v)^2 - \kappa(\v) g(t,\v) \\
&\quad + \frac{n-2}{n} g(t,\v)^{\frac{2n-2}{n-2}} + 2\int_t^\infty  \left(\frac{1}{n\omega_n}\int_{\Ss^{n-1}} |\nabla_\theta\v|^2\,d\theta \right) g'(\tau,\v)\,d\tau\\
&\quad - \frac{2n-2}{n}\int_t^\infty \left(\frac{1}{n\omega_n} \int_{\Ss^{n-1}} |\v|^{\frac{2n}{n-2}}\,d\theta - g(\tau,\v)^{\frac{n}{n-2}} \right) g'(\tau,\v) \,d\tau,
\end{split}
\end{equation}
with $g'$ being $dg/dt$ and 
\begin{equation}\label{eq:g}
g(t,\v) = \frac{1}{n\omega_n} \int_{\Ss^{n-1}} |\v|^2\,d\theta. 
\end{equation}

\begin{proposition}\label{proposition:Phi*-const} Let $\u$ be a nonnegative solution of \eqref{eq:main} in $B_R\setminus\{0\}$ with $\alpha = \frac{n+2}{n-2}$, and let $\Phi_*(r,\u)$ be as in \eqref{eq:Phi*}. Then $\Phi_*(r,\u)$ is well-defined and is constant for $0<r<R$. 
\end{proposition}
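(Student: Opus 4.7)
The plan is to pass to cylindrical coordinates via \eqref{eq:Phi*-cyl} and prove the equivalent statement that $\Psi_*(t,\v)$ is well-defined and independent of $t\in(-\log R,\infty)$. Since $\alpha=\frac{n+2}{n-2}$ gives $\mu=0$ and $\lambda=\frac{(n-2)^2}{4}$, the equation \eqref{eq:main-cyl} reads
\begin{equation*}
\partial_{tt}\v+\Delta_\theta\v-\tfrac{(n-2)^2}{4}\v+|\v|^{\frac{4}{n-2}}\v=0.
\end{equation*}
I will write $A(t)=\frac{1}{n\omega_n}\int_{\Ss^{n-1}}|\partial_t\v|^2\,d\theta$ and $D(t)=\frac{1}{n\omega_n}\int_{\Ss^{n-1}}|\v|^{\frac{2n}{n-2}}\,d\theta$, keep the notation $g, h$ of the excerpt, and set $k(t)=D(t)-g(t)^{\frac{n}{n-2}}\ge 0$ (nonnegativity from Jensen's inequality).

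The key algebraic step is to compute $g''(t)$. Differentiating \eqref{eq:g} twice, substituting $\partial_{tt}\v$ from the PDE, and integrating $\v\cdot\Delta_\theta\v$ by parts on $\Ss^{n-1}$ yields $\frac{1}{2}g''(t)=A(t)+h(t)+\frac{(n-2)^2}{4}g(t)-D(t)$. Proposition~\ref{proposition:Phi-monot}(ii), translated via \eqref{eq:kappa-cyl}, gives the first Pohozaev identity $A-h-\frac{(n-2)^2}{4}g+\frac{n-2}{n}D=\kappa$. Subtracting eliminates $A$ and produces
\begin{equation*}
\tfrac{1}{2}g''(t)-\kappa=2h(t)+\tfrac{(n-2)^2}{2}g(t)-\tfrac{2n-2}{n}D(t).
\end{equation*}
Denoting by $Q(t)$ the ``explicit part'' $\frac{1}{4}(g')^2-\frac{(n-2)^2}{4}g^2-\kappa g+\frac{n-2}{n}g^{\frac{2n-2}{n-2}}$, so that $\Psi_*(t,\v)=Q(t)+2\int_t^\infty hg'\,d\tau-\tfrac{2n-2}{n}\int_t^\infty kg'\,d\tau$, the chain rule gives $Q'(t)=g'(t)\bigl[\frac{1}{2}g''(t)-\frac{(n-2)^2}{2}g(t)-\kappa+\frac{2n-2}{n}g(t)^{\frac{n}{n-2}}\bigr]$. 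Substituting the display above and using $D-g^{\frac{n}{n-2}}=k$ collapses this to the clean identity
\begin{equation*}
Q'(t)=g'(t)\Bigl(2h(t)-\tfrac{2n-2}{n}k(t)\Bigr),
\end{equation*}
which is exactly the integrand that is killed by $-\partial_t$ of the improper tails in the definition of $\Psi_*$. Once the tails are known to converge, we conclude $\Psi_*'(t)\equiv 0$; equivalently, letting $T\to\infty$ in $Q(T)-Q(t)=\int_t^T(2h-\tfrac{2n-2}{n}k)g'\,d\tau$ gives $\Psi_*(t,\v)=\lim_{T\to\infty}Q(T)$, independent of $t$.

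The remaining issue, and the one I expect to be the main obstacle, is the convergence of these improper integrals at $\tau=\infty$: in the critical case $|\v|$ is merely bounded and may oscillate indefinitely, so $g$ and $g'$ need not have pointwise limits and the integrands $hg'$, $kg'$ carry no definite sign. I would resolve this by invoking the a priori bound $|\u(x)|\le C|x|^{-\frac{n-2}{2}}$ to be established in Section~\ref{section:upper}, which via standard elliptic regularity on unit cylinders $(T,T+1)\times\Ss^{n-1}$ yields uniform bounds on $|\partial_t\v|$ and $|\nabla_\theta\v|$ for all large $t$, and then using asymptotic radial symmetry (Theorem~\ref{theorem:asym-rad}) to upgrade these to a quantitative decay $h(t)+k(t)=O(e^{-ct})$ as $t\to\infty$ for some $c>0$. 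Combined with boundedness of $g'$, this makes the tails absolutely convergent, so $\Psi_*(t,\v)$ is well-defined, and the constancy conclusion above transfers to $\Phi_*(r,\u)=\Psi_*(-\log r,\v)$ via \eqref{eq:Phi*-cyl}.
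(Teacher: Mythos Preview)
Your argument is correct and follows essentially the same route as the paper: pass to cylindrical coordinates, verify $\Psi_*'(t)\equiv 0$ via the identity $Q'(t)=g'(t)\bigl(2h(t)-\tfrac{2n-2}{n}k(t)\bigr)$ (this is precisely the computation \eqref{eq:Psi*'} carried out in the proof of Lemma~\ref{lemma:class-crit}), and handle well-definedness by combining the a~priori bound of Section~\ref{section:upper} with the asymptotic radial symmetry of Theorem~\ref{theorem:asym-rad} to force exponential decay of $h$ and $k$. One small slip: you write ``keep the notation $g,h$ of the excerpt,'' but the paper never defines $h$; your intended meaning $h(t)=\frac{1}{n\omega_n}\int_{\Ss^{n-1}}|\nabla_\theta\v|^2\,d\theta$ is clear from context and matches the integrand in \eqref{eq:Psi*}.
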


We shall postpone the proof to Section \ref{section:asym-rad}, since proving the well-definedness of $\Phi_*(r,\u)$ essentially relies the asymptotic radial symmetry of local solutions to \eqref{eq:main} (see Theorem \ref{theorem:asym-rad}). 

\begin{remark}\label{remark:Phi*-const2} Knowing that $\Phi_*(r,\u)$ is constant, we obtain a constant $\kappa_*(\u)$ such that
\begin{equation}\label{eq:kappa*}
\kappa_*(\u) = \Phi_*(r,\u), 
\end{equation}
for any $0<r<R$. We shall call this constant the second Pohozaev invariant. As with the first Pohozaev invariant, we will also write it by $\kappa_*(\v)$ whenever $\v$ is the cylindrical transformation. Clearly,  
\begin{equation}\label{eq:kappa*-cyl}
\begin{split}
\kappa_*(\v) = \Psi_*(t,\v), 
\end{split}
\end{equation}
for any $t>-\log R$. In Section \ref{section:sing} and Section \ref{subsection:asym+sub} we will observe that $\kappa_*(\v) = 0$ if and only if $\v(t,\theta) = (1+o(1))|\v(t,\theta)|\ee$ uniformly for $\theta\in\Ss^{n-1}$ as $t\ra\infty$, with some unit nonnegative vector $\ee\in\R^m$. 
\end{remark}


\section{Solutions on the Whole Space}\label{section:global}

In this section we classify the smooth solutions of \eqref{eq:main} on the whole space $\R^n$. The analysis is based on the method of moving spheres along with the Kelvin transform, and we follow essentially the argument proposed by Li and Zhang \cite[Section 2]{LZ}, with only a minor modification. Nevertheless, we shall contain the full argument here for the reader's convenience. 

Given $z\in\R^n$ and $r>0$, we shall write $\u_{z,r}^*$ by the Kelvin transform of $\u$ with respect to the sphere $B_r(z)$, that is, 
\begin{equation}\label{eq:kelvin}
\u_{z,r}^*(y) = \left(\frac{r}{|y-z|}\right)^{n-2} \u \left( z + \frac{r^2}{|y-z|^2}(y-z) \right).
\end{equation}
Let us remark that if $\u$ is a solution of \eqref{eq:main} in $\R^n$, then
\begin{equation}\label{eq:kelvin-pde}
-\Delta \u_{z,r}^* = \left( \frac{r}{|y-z|} \right)^{(\alpha-1)\mu} |\u_{z,r}^*|^{\alpha-1} \u_{z,r}^*\quad\text{in }\R^n\setminus\{z\},
\end{equation}
where $\mu$ is given by \eqref{eq:lam-mu}. Note that $\mu\geq 0$ if and only if $1<\alpha\leq\frac{n+2}{n-2}$. The non-negativity of $\mu$ will play a key role when comparing $\u$ and $\u_{z,r}^*$. 

We  begin with a basic lemma that holds for any nonnegative, superharmonic function, as a starting point of the method of moving sphere. 

\begin{lemma}[Lemma 2.1 in \cite{LZ}]\label{lemma:basic} Let $v\in C^2(\R^n)$ be a super-harmonic and nonnegative function on $\R^n$. Then for each $z\in\R^n$, there exists $r_0>0$, which may depend on $v$ and $z$, such that for all $0<r<r_0$, 
\begin{equation}\label{eq:basic}
v_{z,r}^* \leq v\quad\text{in }\R^n\setminus B_r(z).
\end{equation}
\end{lemma}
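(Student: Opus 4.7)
The plan is to translate to $z = 0$ and split cases by whether $v$ vanishes somewhere. If $v \equiv 0$ the lemma is trivial; otherwise the strong minimum principle for super-harmonic functions gives $v > 0$ on all of $\R^n$, and in particular $v(0) > 0$. Writing $y = s\theta$ with $\theta \in \Ss^{n-1}$ and $s \geq r$, the desired inequality $v_{0,r}^*(y) \leq v(y)$ is equivalent, after multiplying both sides by $s^{(n-2)/2}$, to $\psi_\theta(b) \geq \psi_\theta(a)$, where
$$
\psi_\theta(t) := t^{(n-2)/2} v(t\theta), \qquad b = s \geq r, \qquad a = r^2/s \leq r,
$$
so that $ab = r^2$. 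Thus the task reduces to proving this inequality for every pair $(a,b)$ with $ab = r^2$ and $a \leq r \leq b$, uniformly in $\theta$, for all sufficiently small $r$.

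For $t$ near $0$, I would compute $\psi_\theta'(t) = t^{(n-4)/2}\bigl(\tfrac{n-2}{2} v(t\theta) + t\theta \cdot \nabla v(t\theta)\bigr)$, whose bracket tends to $\tfrac{n-2}{2} v(0) > 0$ uniformly in $\theta$ as $t \to 0^+$ by the $C^2$ regularity. Consequently there is $r_1 > 0$ with $\psi_\theta'(t) > 0$ for $0 < t \leq r_1$ and every $\theta \in \Ss^{n-1}$. This disposes of the case $b \leq r_1$, since then $a \leq b \leq r_1$ both lie in the increasing regime of $\psi_\theta$.

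The remaining case $b > r_1$ requires a lower bound on $v$ at infinity, and here super-harmonicity enters essentially. Set $m := \min_{|y| = r_1} v > 0$, and for each $R > r_1$ consider the harmonic function $w_R(y) := m\, r_1^{n-2}\bigl(|y|^{2-n} - R^{2-n}\bigr)$ on the annulus $\{r_1 < |y| < R\}$, which satisfies $w_R \leq v$ on both boundary spheres. The minimum principle applied to the super-harmonic function $v - w_R$, followed by $R \to \infty$, yields the B\^ocher-type bound $v(y) \geq m\, r_1^{n-2} |y|^{2-n}$ for $|y| \geq r_1$. With $M := \max_{\bar B_{r_1}} v$, the continuity bound gives $\psi_\theta(a) \leq M a^{(n-2)/2} = M r^{n-2} b^{-(n-2)/2}$ from above and the B\^ocher bound gives $\psi_\theta(b) \geq m\, r_1^{n-2} b^{-(n-2)/2}$ from below; the desired comparison follows whenever $r^{n-2} \leq m\, r_1^{n-2}/M$. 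Taking $r_0 := \min\{r_1,\,(m r_1^{n-2}/M)^{1/(n-2)}\}$ completes the proof.

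The main obstacle is this last step: monotonicity alone fails for $|y|$ large, so a usable lower bound on $v$ at infinity must be extracted from super-harmonicity, and the comparison argument above is essentially the only place where the PDE hypothesis enters. Everything else is an elementary manipulation of the Kelvin transform in polar coordinates, with $C^2$ regularity used only to obtain the one-sided monotonicity of $\psi_\theta$ at the origin.
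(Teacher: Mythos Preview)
Your proof is correct and is essentially the standard argument from \cite[Lemma 2.1]{LZ}, which the paper simply cites without reproducing. The reduction to the monotonicity of $\psi_\theta(t)=t^{(n-2)/2}v(t\theta)$, the splitting into a near-origin regime (handled by $\psi_\theta'(0+)>0$ from $v(0)>0$) and a far regime (handled by the comparison bound $v(y)\geq m\,r_1^{n-2}|y|^{2-n}$ for $|y|\geq r_1$), and the final choice of $r_0$ are exactly the steps in Li--Zhang's proof, so there is nothing further to compare.
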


The next lemma is an analogue of \cite[Lemma 2.4]{CGS} which claims that either the inequality \eqref{eq:basic} must hold until the solution becomes symmetric (with respect to a sphere) or it must fail on a compact subset of $\R^n$. The proof is given in that of \cite[Lemma 2.2]{LZ}, and we shall not repeat it here. 

\begin{lemma}\label{lemma:basic2} Let $v\in C^2(\R^n)$, $z\in\R^n$ and $r_0>0$ be such that 
\begin{equation}\label{eq:basic2-pde}
-\Delta (v - v_{z,r_0}^*) \geq 0 \quad\text{in }\R^n\setminus \bar{B}_{r_0}(z),
\end{equation}
and
\begin{equation}\label{eq:basic2-asmp}
v_{z,r_0}^* < v \quad\text{in }\R^n\setminus\bar{B}_{r_0}(z). 
\end{equation}
Then there is a small $\e>0$ such that for any $r_0<r<r_0 + \e$, 
\begin{equation}\label{eq:basic2}
v_{z,r}^* < v \quad\text{in }\R^n\setminus B_r(z). 
\end{equation} 
\end{lemma}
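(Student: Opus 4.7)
By the translation invariance of the statement we may assume $z=0$ and write $w_r:=v-v^*_{0,r}$, with radial coordinate $\rho=|y|$. The plan is to prove $w_r>0$ in the open exterior $\{\rho>r\}$ for every $r\in(r_0,r_0+\epsilon)$ by decomposing the exterior into three regions: a thin annulus just outside $\partial B_r$ handled by a Hopf-type argument, a bounded middle annulus handled by continuity and compactness, and the far field $\{\rho\ge R\}$ handled by the asymptotics of the Kelvin transform.

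For the thin annulus, $w_{r_0}$ is nonnegative, superharmonic on $\mathbb R^n\setminus\bar B_{r_0}$, vanishes on $\partial B_{r_0}$, and is strictly positive in the open exterior by hypothesis; the Hopf boundary point lemma then yields $c_0>0$ with $\partial_\rho w_{r_0}\ge c_0$ uniformly on $\partial B_{r_0}$. A direct differentiation of \eqref{eq:kelvin} (with $z=0$) gives
\begin{equation*}
\partial_\rho v_r^*\big|_{\rho=r}=-\tfrac{n-2}{r}v-\partial_\rho v,\qquad\text{so that}\qquad \partial_\rho w_r\big|_{\rho=r}=2\,\partial_\rho v+\tfrac{n-2}{r}v\ \text{on }\partial B_r,
\end{equation*}
and the right-hand side depends continuously on $r$ because $v\in C^2$. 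By compactness of the sphere and continuity in $r$, one finds $\epsilon_0,\delta_1>0$ such that $\partial_\rho w_r\ge c_0/2$ on $\partial B_r$ for every $r\in[r_0,r_0+\epsilon_0]$, and a Taylor expansion combined with $w_r|_{\partial B_r}=0$ forces $w_r>0$ in $\{r<\rho<r+\delta_1\}$. For the middle annulus, pick a large $R>r_0$ and set $K_R:=\{r_0+\delta_1/4\le \rho\le R\}$: on this compact set $w_{r_0}$ admits a positive lower bound $m_R$, while $v_r^*\to v_{r_0}^*$ uniformly on $K_R$ as $r\to r_0$, so that $w_r\ge m_R/2>0$ on $K_R$ after further shrinking $\epsilon_0$; this in particular covers $\{r+\delta_1/2\le\rho\le R\}$.

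The far field is the crux. Taylor-expanding the Kelvin argument $(r^2/\rho)\theta\to 0$ as $\rho\to\infty$ and using $v\in C^2$ yields
\begin{equation*}
v_r^*(y)-v_{r_0}^*(y)=v(0)\,\frac{r^{n-2}-r_0^{n-2}}{\rho^{n-2}}+O(\rho^{-(n-1)}),\quad\rho\to\infty,
\end{equation*}
uniformly in $r\in[r_0,r_0+1]$, and the strong maximum principle for the superharmonic $v$ (with $v\not\equiv 0$ since $v_{r_0}^*<v$ strictly) gives $v(0)>0$. To defeat this growing perturbation I would extract a matching lower bound $w_{r_0}(y)\gtrsim\rho^{-(n-2)}$ at infinity: comparing $w_{r_0}$ with the explicit harmonic kernel $\psi(y)=1-(r_0/\rho)^{n-2}$ on the annulus $B_{R'}\setminus\bar B_{r_0}$, the Hopf constant $c_0$ together with the boundary control $w_{r_0}\ge m_{R'}$ on $\partial B_{R'}$ yield $w_{r_0}\ge c_*\psi$ in that annulus for a suitable $c_*>0$, whence $w_{r_0}\gtrsim\rho^{-(n-2)}$ near $\partial B_{R'}$. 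Balancing $R$ large with $\epsilon_0$ small, the leading perturbation is absorbed and $w_r>0$ in $\{\rho\ge R\}$. The main obstacle is precisely this last step: abstract superharmonicity does not a priori pin down the decay rate of $w_{r_0}$ at infinity, and extracting the rate $\rho^{-(n-2)}$ requires carefully combining the Hopf normal derivative bound with explicit harmonic barriers to rule out a $\liminf w_{r_0}=0$ pathology.
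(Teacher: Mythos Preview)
The paper does not give its own proof of this lemma; it defers entirely to \cite[Lemma 2.2]{LZ}. Your three–region decomposition (thin annulus via Hopf, bounded annulus via compactness, far field via asymptotics) is exactly the standard strategy there, and your treatment of the first two regions is correct. In particular the explicit formula $\partial_\rho w_r|_{\rho=r}=2\partial_\rho v+\tfrac{n-2}{r}v$ and its continuity in $r$ are right, and the compactness step on $K_R$ is fine.

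The far-field step, however, has a genuine gap. First, you invoke ``the strong maximum principle for the superharmonic $v$'' to conclude $v(0)>0$, but the hypotheses of the lemma only assert that $v-v_{z,r_0}^*$ is superharmonic, not $v$ itself (nor that $v\ge 0$). In the paper's applications $v=u_i$ is a nonnegative solution of \eqref{eq:main} and hence superharmonic, but that is extra structure not recorded in the abstract statement. Second, your barrier $\psi(y)=1-(r_0/\rho)^{n-2}$ tends to $1$ at infinity, so the inequality $w_{r_0}\ge c_*\psi$ on the bounded annulus $B_{R'}\setminus\bar B_{r_0}$ only yields a \emph{constant} lower bound at the fixed radius $R'$, not the decay rate $\rho^{-(n-2)}$ you claim, and in any case it does not propagate to $\{\rho>R'\}$ without a boundary condition at infinity. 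You yourself flag this as the ``main obstacle'' and do not close it; as written the argument does not rule out $\liminf_{\rho\to\infty}w_{r_0}=0$ with $w_{r_0}$ decaying faster than the perturbation $v_r^*-v_{r_0}^*$.

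In the Li--Zhang argument the far field is handled with considerably more PDE structure than your abstract route allows: one uses that $v\ge 0$ is itself superharmonic (so $v_{z,r}^*\ge 0$ and $v_{z,r}^*(y)=O(|y|^{2-n})$ uniformly in $r$), together with the fact that the equation forces $-\Delta(v-v_{z,r}^*)\ge 0$ for \emph{every} $r$ near $r_0$ on the set where the difference is nonnegative, not just for $r=r_0$. Once your first two steps give $w_r>0$ on $\{r<\rho\le R\}$, one then applies the maximum principle directly to $w_r$ on $\{\rho\ge R\}$ (superharmonic there, strictly positive on $\partial B_R$, $\liminf\ge 0$ at infinity from the positivity and decay above), and the strong maximum principle finishes. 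Trying instead to control $w_r$ purely through $w_{r_0}$ plus a perturbation estimate, using only the superharmonicity of $w_{r_0}$, cannot succeed in general: that hypothesis alone does not pin down any decay rate of $w_{r_0}$ at infinity.
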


Now let us turn our interest to the nonnegative, smooth global solutions $\u$ of \eqref{eq:main}. Given $z\in\R^n$, let us define, for each $1\leq i\leq m$, 
\begin{equation}\label{eq:u22}
r_i(z) = \sup\{r>0:\text{$(u_i)_\rho^*\leq u_i$ in $\R^n\setminus B_\rho(z)$ for any $0<\rho<r$}\}.
\end{equation}
Since each component $u_i$ of $\u$ is nonnegative and superharmonic, Lemma \ref{lemma:basic} applies to $u_i$.  from which we know that $r_i(z) > 0$ for each $1\leq i\leq m$. Thus, we have 
\begin{equation}\label{eq:rb}
\bar{r}(z) = \inf_{1\leq i\leq m} r_i(z)>0.
\end{equation}

Let us remark that we have defined $\bar{r}(z)$ by the infimum, instead of minimum, of finite set of indices $\{1,2,\cdots,m\}$, since $r_i(z)$ as a supremum could be infinite. Moreover, if $r_i(z) = \infty$ for all $1\leq i\leq m$, we shall say that $\bar{r}(z) = \infty$. 

The following lemma takes care of the case when $\bar{r}(z)$ is either finite or infinite. The proof is essentially the same with \cite[Lemma 1.2, Lemma 1.3]{DHV}, which deals with the upper critical case $\alpha = \frac{n+2}{n-2}$ only, whence we shall skip the details. 

\begin{lemma}\label{lemma:u2} Let $\u$ be a nonnegative solution of \eqref{eq:main} in $\R^n$ with $1<\alpha\leq\frac{n+2}{n-2}$, $z\in\R^n$ be arbitrary and $\bar{r}(z)$ be as in \eqref{eq:rb}. If $\bar{r}(z)$ is finite, then  
\begin{equation}\label{eq:u2} 
\u_{z,\bar{r}(z)}^* = \u\quad\text{in }\R^n\setminus\{z\}. 
\end{equation}
If $\bar{r}(z_0) = \infty$ for some $z_0\in\R^n$, then $\bar{r}(z) = \infty$ for all $z\in\R^n$. 
\end{lemma}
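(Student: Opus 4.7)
For the first assertion, the plan is to fix $z\in\R^n$ with $r_0:=\bar{r}(z)<\infty$, select a minimizing index $i_0$ with $r_{i_0}(z)=r_0$ (which exists because \eqref{eq:rb} is an infimum over a finite set), and exploit the strict positivity of $u_{i_0}$. The key preliminary observation is that $u_{i_0}\not\equiv 0$: an identically zero component would satisfy $(u_j)^*_{z,\rho}\equiv 0\leq u_j$ for every $\rho>0$, forcing $r_j(z)=\infty$. Hence the strong maximum principle applied to the superharmonic, nonnegative $u_{i_0}$ gives $u_{i_0}>0$ everywhere on $\R^n$. Continuity of $\rho\mapsto (u_j)^*_{z,\rho}$ combined with the definition of $r_j(z)$ yields the componentwise bound $(u_j)^*_{z,r_0}\leq u_j$ on $\R^n\setminus B_{r_0}(z)$ for every $j$, so in particular $|\u^*_{z,r_0}|\leq|\u|$ there. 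Subtracting \eqref{eq:kelvin-pde} from \eqref{eq:main} for $i=i_0$ and using both this bound and the inequality $(r_0/|y-z|)^{(\alpha-1)\mu}\leq 1$ (valid since $\mu\geq 0$ and $|y-z|\geq r_0$) shows that $w:=u_{i_0}-(u_{i_0})^*_{z,r_0}$ is nonnegative and superharmonic in $\R^n\setminus\bar B_{r_0}(z)$; the strong maximum principle then gives the dichotomy $w\equiv 0$ or $w>0$ strictly, and Lemma~\ref{lemma:basic2} rules out the latter since it would force $r_{i_0}(z)>r_0$.

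Substituting $u_{i_0}\equiv (u_{i_0})^*_{z,r_0}$ back into the pair of equations and dividing by $u_{i_0}>0$ produces the pointwise identity
\begin{equation*}
|\u|^{\alpha-1} = \bigl(r_0/|y-z|\bigr)^{(\alpha-1)\mu}\,|\u^*_{z,r_0}|^{\alpha-1} \quad\text{on } \R^n\setminus\bar B_{r_0}(z).
\end{equation*}
In the upper critical case one has $\mu=0$, hence $|\u|=|\u^*_{z,r_0}|$; combined with the componentwise $u_j\geq (u_j)^*_{z,r_0}$ and the equal $l^2$-norms, this forces $u_j\equiv (u_j)^*_{z,r_0}$ for every $j$, which extends to $\R^n\setminus\{z\}$ by the involution property of the Kelvin transform. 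In the subcritical case $\mu>0$, the identity together with $(r_0/|y-z|)^{(\alpha-1)\mu}<1$, $|\u^*_{z,r_0}|\leq|\u|$, and $|\u|\geq u_{i_0}>0$ yields a contradiction, so $\bar{r}(z)<\infty$ is never realized in this range and the conclusion holds vacuously.

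For the second assertion, I would argue by contradiction. Assume $\bar{r}(z_0)=\infty$ yet $\bar{r}(z_1)=r_1<\infty$ for some other $z_1$. The first assertion gives $\u\equiv\u^*_{z_1,r_1}$, so the boundedness of $\u$ near $z_1$ translates into the asymptotic Kelvin decay $u_j(y)\leq C_j|y|^{-(n-2)}$ at infinity for every $j$. On the other hand, $\bar{r}(z_0)=\infty$ means $(u_j)^*_{z_0,\rho}\leq u_j$ for all $\rho>0$; restricting to a ray $y=z_0+se$ and setting $\rho=\sqrt{tR}$ with $0<t\leq R$ rewrites this as the monotonicity assertion that $s\mapsto s^{(n-2)/2}u_j(z_0+se)$ is non-decreasing in $s$. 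If some $u_j$ is not identically zero, the strong maximum principle forces $u_j>0$ everywhere, and the monotonicity produces a positive lower bound $u_j(z_0+se)\geq cs^{-(n-2)/2}$ for $s$ large. This is incompatible with the Kelvin upper bound along the same ray as $s\to\infty$, since $(n-2)/2<n-2$ when $n\geq 3$. Hence $\u\equiv 0$, but then every $r_j$ is infinite and $\bar{r}(z_1)=\infty$, contradicting $\bar{r}(z_1)<\infty$.

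The step I expect to be the main obstacle is the reduction to a minimizing index with a \emph{non-trivial} component in the first part: the strict positivity of $u_{i_0}$ is what converts the pointwise identity into the crucial algebraic constraint between $|\u|$ and $|\u^*_{z,r_0}|$. Once this is in place, the remaining ingredients are the strong maximum principle, Lemma~\ref{lemma:basic2}, and the scaling behavior of the Kelvin transform.
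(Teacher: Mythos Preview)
Your proof is correct and follows the same moving-spheres strategy that the paper defers to \cite{DHV} for, with one minor but elegant variation. The paper's intended approach (visible in the sketch for Lemma~\ref{lemma:rad}) is to propagate strict inequality from one nontrivial component to all of them via the coupling $|\u|>|\u^*_{z,r_0}|$ and the strong maximum principle, then invoke Lemma~\ref{lemma:basic2} componentwise; you instead work with the single minimizing index $i_0$, and once $u_{i_0}\equiv(u_{i_0})^*_{z,r_0}$ is secured you divide by $u_{i_0}>0$ to extract the scalar identity $|\u|^{\alpha-1}=(r_0/|y-z|)^{(\alpha-1)\mu}|\u^*_{z,r_0}|^{\alpha-1}$, which settles both the critical case (norm equality plus componentwise inequality forces $\u=\u^*_{z,r_0}$) and the subcritical case (immediate contradiction, so the hypothesis is vacuous) in one stroke. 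Your argument for the second assertion, comparing the Kelvin decay $|y|^{-(n-2)}$ coming from $\u=\u^*_{z_1,r_1}$ against the ray monotonicity $s\mapsto s^{(n-2)/2}u_j(z_0+se)$ inherited from $\bar r(z_0)=\infty$, is standard and correct.
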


We are now ready to classify the smooth global solutions.

\begin{proof}[Proof of Theorem \ref{theorem:main-g}] In view of Lemma \ref{lemma:u2}, we observe that $\bar{r}(z)$ defined in \eqref{eq:rb} is either finite or infinite for all $z\in\R^n$. If $\bar{r}(z)$ is finite for all $z\in\R^n$, then we have \eqref{eq:u2} at every point $z\in\R^n$. In this case, we may apply \cite[Lemma 11.1]{LZ} that there are $a_i\geq 0$, $r_i>0$ and $z_i\in\R^m$ for $1\leq i\leq m$ such that
\begin{equation}\label{eq:u-g1}
u_i(x) = a_ir_i^{-\frac{n-2}{2}}\left( \frac{r_i}{r_i^2 + |x-z_i|^2} \right)^{\frac{n-2}{2}}.
\end{equation}
On the other hand, if $\bar{r}(z)$ is infinite for all $z\in\R^n$, we have \eqref{eq:u22} for all $r>0$ at any $z\in\R^n$. Due to \cite[Lemma 11.2]{LZ}, there are $b_i\geq 0$ for $1\leq i\leq m$ such that
\begin{equation}\label{eq:u-g2}
u_i(x) = b_i.
\end{equation}

Suppose that $\u$ satisfies \eqref{eq:u-g2}, that is, $\u$ is constant everywhere on $\R^n$. As $\u$ being a nonnegative solution of \eqref{eq:main} in $\R^n$, $\u$ must be zero everywhere. Hence, Theorem \ref{theorem:main-g} (i) and (ii) are all satisfied under this assumption.

Next, let us consider the case that $u_i$ satisfies \eqref{eq:u-g1} for all $1\leq i\leq m$. This part is the same with the proof of \cite[Proposition 1.1]{DHV}, so we omit the details.
\end{proof}


\section{Solutions in Punctured Space}\label{section:sing}


\subsection{Radial Symmetry of Singular Solutions}\label{subsection:sing}

This section is devoted to the radial symmetry of nonnegative, singular solutions of \eqref{eq:main}. To be more precise, $\u$ is a nonnegative solution of \eqref{eq:main} in the punctured space $\R^n\setminus\{0\}$ that has a non-removable singularity at the origin, i.e., 
\begin{equation}\label{eq:sing}
\limsup_{x\ra 0} |\u(x)| = \infty.  
\end{equation}
The proof relies again on the method of moving spheres used in the previous section. The proof for the case of a single equation has already been established by Jin, et al. \cite[Proposition 2.1]{JLX}.  Nevertheless, the multiplicity in the components here makes the comparison argument more subtle, as observed in the previous section. Let us also address that the method of moving plane also works (c.f. \cite[Theorem 8.1]{CGS}) after a suitable modification. 

\begin{lemma}\label{lemma:rad} Let $\u$ be a nonnegative solution of \eqref{eq:main} in $\R^n\setminus\{0\}$ with $1<\alpha\leq\frac{n+2}{n-2}$. If $\u$ satisfies \eqref{eq:sing}, then $\u$ is radially symmetric. 
\end{lemma}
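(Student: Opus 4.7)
My plan is to extend the moving-spheres machinery of Section \ref{section:global} to the punctured setting, using the non-removable singularity at $0$ as the stopping mechanism. For each $z\in\R^n\setminus\{0\}$, in analogy with \eqref{eq:u22}-\eqref{eq:rb}, I would define the joint critical radius
$$\bar r(z)=\sup\Bigl\{r>0:(u_i)^*_{z,\rho}\leq u_i\text{ in }\R^n\setminus(B_\rho(z)\cup\{0\})\text{ for every }i\text{ and every }0<\rho<r\Bigr\},$$
and aim to show $\bar r(z)=|z|$ for every $z\neq 0$. Radial symmetry about the origin then follows because the collection $\{\partial B_{|z|}(z):z\neq 0\}$ exhausts all spheres through $0$, and invariance of $\u$ under the Kelvin inversion across each of these generates the full group of conformal maps fixing $0$.

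The first ingredient is an initial-comparison lemma analogous to Lemma \ref{lemma:basic}, giving $\bar r(z)>0$; the singularity actually helps here, since for small $\rho$ the Kelvin image $(u_i)^*_{z,\rho}$ is evaluated over a region bounded away from $0$, so any blowing-up component automatically dominates. The upper bound $\bar r(z)\leq|z|$ exploits \eqref{eq:sing} directly: fix $i_0$ with $\limsup_{x\to 0}u_{i_0}(x)=\infty$; for any $r>|z|$ the Kelvin image of $0$ is $z^\ast=z(1-r^2/|z|^2)\notin\overline{B}_r(z)$, and $(u_{i_0})^*_{z,r}$ is unbounded in every punctured neighborhood of $z^\ast$, while $u_{i_0}$ is finite and continuous there, contradicting the comparison.

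The core step is $\bar r(z)\geq|z|$. Arguing by contradiction, assume $\bar r(z)<|z|$ and set $w_i=u_i-(u_i)^*_{z,\bar r(z)}\geq 0$. Using \eqref{eq:kelvin-pde}, $\mu\geq 0$, and the crucial observation that $w_i\geq 0$ for every $i$ implies $|\u^*_{z,\bar r(z)}|\leq|\u|$ pointwise, one arrives at the cooperative bound
$$-\Delta w_i\geq \bigl(|\u|^{\alpha-1}-|\u^*_{z,\bar r(z)}|^{\alpha-1}\bigr)(u_i)^*_{z,\bar r(z)}+|\u^*_{z,\bar r(z)}|^{\alpha-1}w_i\geq 0,$$
so each $w_i$ is superharmonic on the punctured exterior. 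A cooperative analog of Lemma \ref{lemma:basic2} forces $w_{i_\ast}(y_0)=0$ at some interior point for the index $i_\ast$ realizing $\bar r(z)=r_{i_\ast}(z)$, and the strong maximum principle then yields $w_{i_\ast}\equiv 0$ throughout. Propagation to every component is the key coupling step: plugging $w_{i_\ast}\equiv 0$ back into the displayed inequality gives $(|\u|^{\alpha-1}-|\u^*_{z,\bar r(z)}|^{\alpha-1})(u_{i_\ast})^*_{z,\bar r(z)}\equiv 0$, and since $u_{i_\ast}\not\equiv 0$ implies $(u_{i_\ast})^*_{z,\bar r(z)}>0$ throughout, one gets $|\u|\equiv|\u^*_{z,\bar r(z)}|$. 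The algebraic identity
$$|\u|^2-|\u^*_{z,\bar r(z)}|^2=\sum_{i=1}^m\bigl(u_i+(u_i)^*_{z,\bar r(z)}\bigr)w_i$$
is then a sum of nonnegative terms summing to zero, so $w_i\equiv 0$ for every $i$ with $u_i\not\equiv 0$. In particular, for $i_0\in I:=\{i:\limsup_{x\to 0}u_i=\infty\}\neq\emptyset$, the identity $u_{i_0}\equiv(u_{i_0})^*_{z,\bar r(z)}$ evaluated as $x\to 0$ (where $\bar r(z)<|z|$ places $0$ in the exterior region) gives $\limsup_{x\to 0}u_{i_0}(x)=(\bar r(z)/|z|)^{n-2}u_{i_0}(z^\ast)<\infty$, contradicting $i_0\in I$.

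The main obstacle I anticipate is the cooperative analog of Lemma \ref{lemma:basic2}: one must combine the strong maximum principle for each $w_i$ on the unbounded punctured exterior (which requires control at infinity, obtained either by a Brezis-Lions-type decay estimate for each component or by a preliminary Kelvin reduction) with a Hopf-type argument allowing strict positivity of all $w_i$ to extend to slightly larger radii. Once this and the propagation displayed above are in hand, the scheme follows the scalar template of \cite[Proposition 2.1]{JLX}, with the cooperative structure of the nonlinearity $|\u|^{\alpha-1}\u$ absorbing the complications introduced by the multi-component setting.
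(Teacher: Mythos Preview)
Your approach follows the same moving-spheres template as the paper: define $\bar r(z)$ and show $\bar r(z)=|z|$ for every $z\neq 0$. The difference lies in how the core step $\bar r(z)\geq|z|$ is argued. You run the contrapositive: assume $\bar r(z)<|z|$, use criticality of $\bar r(z)$ to force a touching point $w_{i_\ast}(y_0)=0$, deduce $w_{i_\ast}\equiv 0$ via the strong maximum principle, propagate to all components through the algebraic identity, and only then contradict the singularity. The paper instead uses the singularity \emph{constructively at the outset}: since $\bar r(z)<|z|$ places $0$ in the exterior region while the reflection of $0$ lies in the interior of $B_{\bar r(z)}(z)$, the blowing-up component $u_{i_0}$ satisfies $w_{i_0}>0$ strictly near $0$ (because $(u_{i_0})^*_{z,\bar r(z)}$ is smooth there), and the maximum principle extends $w_{i_0}>0$ to the whole punctured exterior. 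The cooperative coupling then gives $|\u|>|\u^*_{z,\bar r(z)}|$, hence $w_i>0$ for every nontrivial $i$, and one pushes the radius via the analogue of Lemma~\ref{lemma:basic2}, contradicting the definition of $\bar r(z)$. This direct route sidesteps precisely the obstacle you flag: no separate control at infinity or at the puncture is needed in a touching analysis, because strict positivity is obtained for free from the singularity \emph{before} Lemma~\ref{lemma:basic2} is invoked.

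One correction to your outline: $\bar r(z)=|z|$ does \emph{not} yield invariance of $\u$ under Kelvin inversion across $\partial B_{|z|}(z)$; it only yields the family of one-sided inequalities $(u_i)^*_{z,r}\leq u_i$ in $\R^n\setminus(B_r(z)\cup\{0\})$ for all $z\neq 0$ and $0<r<|z|$. Radial symmetry is then read off from this family by an elementary calculus lemma (the paper invokes \cite[Lemma~2.1]{JLX}), not from a group-theoretic statement about conformal maps fixing the origin.
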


\begin{proof} Let $z\in\R^n\setminus\{0\}$ be arbitrary. Arguing similarly as with Lemma \ref{lemma:basic} (whose proof can be found in \cite[Lemma 2.1]{LZ}), there exists some $0<r_0<|z|$ such that for any $0<r\leq r_0$, 
\begin{equation*}
(u_i)_{z,r}^* \leq u_i\quad\text{in $\R^n\setminus (B_r(z)\cup\{0\})$ for each $1\leq i\leq m$}.
\end{equation*}
Hence, one can define, as with \eqref{eq:u22} and \eqref{eq:rb},
\begin{equation*}
r_i(z) = \sup\{r>0: (u_i)_{z,\rho}^* \leq u_i\text{ in $\R^n\setminus (B_\rho(z)\cup\{0\})$ for any $0<\rho<r$}\},
\end{equation*}
and
\begin{equation*}
\bar{r}(z) = \inf_{1\leq i\leq m} r_i(z).
\end{equation*}

We first claim that
\begin{equation}\label{eq:rbz-rad}
0<\bar{r}(z)\leq |z|.
\end{equation}
The positivity of $\bar{r}(z)$ is clear.To prove the second inequality in \eqref{eq:rbz-rad}, let us first observe that by \eqref{eq:sing}, there exist some sequence $x_j\ra 0$ and a component $u_i$ such that $u_i(x_j) \ra \infty$. If $\bar{r}(z)>|z|$, then by its definition, there should exist $\rho>|z|$ such that 
\begin{equation}\label{eq:rbz-rad-f}
(u_i)_{z,\rho}^* \leq u_i\quad\text{in }\R^n\setminus B_\rho(z).
\end{equation}
Now let $y_j$ be the reflection of $x_j$ with respect to $\partial B_\rho(z)$, i.e.,
\begin{equation*}
y_j  = z + \left( \frac{\rho}{|x_j - z|}\right)^2 (x_j-z).
\end{equation*}
Since $x_j\ra 0$, we have $y_j \in \R^n\setminus B_\rho(z)$ for all sufficiently large $j$, and moreover, 
\begin{equation*}
y_j \ra y_0 =  \left(1- \left( \frac{\rho}{|z|} \right)^2 \right) z. 
\end{equation*}
Thus, if we take $\rho$ close enough to $|z|$, we have $y_0\neq 0$, whence $u_i$ is smooth at $y_0$. However, \eqref{eq:rbz-rad-f} implies
\begin{equation*}
\begin{split}
u_i(y_0) &= \lim_{j\ra\infty} u_i(y_j) \geq \lim_{j\ra\infty} ((u_i)_{z,\rho}^* (y_j)) \geq \left(\frac{|z|}{\rho}\right)^{n-2} \lim_{j\ra\infty} u_i(x_j) = \infty,
\end{split}
\end{equation*}
a contradiction. 

From \eqref{eq:rbz-rad}, we can also claim that
\begin{equation*}
\bar{r}(z) = |z|.
\end{equation*}
The argument is based on the proof of \cite[Proposition 2.1]{JLX} with the corresponding modification shown in Lemma \ref{lemma:u2}, which amounts to the number of nontrivial components. The main idea is that if $\bar{r}(z) < |z|$, then \eqref{eq:sing} together with the maximum principle implies that 
\begin{equation}\label{eq:rbz-rad2-f}
u_i > (u_i)_{z,\bar{r}(z)}^*\quad\text{in }\R^n\setminus(\bar{B}_{\bar{r}(z)}(z)\cup\{0\}),
\end{equation}
at least for one $1\leq i\leq m$. Then we must have $|\u| > |\u_{z,\bar{r}(z)}^*|$ in $\R^n\setminus (\bar{B}_{\bar{r}(z)}(z)\cup\{0\})$, and the strong maximum principle yields that the strict inequality in \eqref{eq:rbz-rad2-f} must hold for all nontrivial components. Hence, as with Lemma \ref{lemma:basic2}, we obtain some $\e>0$ such that \eqref{eq:rbz-rad2-f} holds for all $1\leq i\leq m$ with $\bar{r}(z)$ replaced by some $\bar{r}(z) < r< \bar{r}(z) + \e$, a contradiction to \eqref{eq:rbz-rad-f}. The details are omitted. 

To this end, we have proved that for each $z\in\R^n\setminus\{0\}$ and for any $0<r<|z|$, 
\begin{equation*}
(u_i)_{z,r}^* \leq u_i \quad\text{in $\R^n\setminus (B_r(z)\cup\{0\})$ for each $1\leq i\leq m$}. 
\end{equation*}
Thus, one may deduce from \cite[Lemma 2.1]{JLX} that $u_i$ is radially symmetric for each $1\leq i\leq m$. 
\end{proof} 


\subsection{Limiting Energy Levels and Pohozaev Invariants}\label{subsection:kappa}

Knowing the radial symmetry of singular solutions, we may classify the nonnegative solutions on the punctured space, using the balanced-energy-limit. The idea is to consider both {\it blowups} and {\it shrink-downs} of $\u$ under the scaling \eqref{eq:ur}. Here by saying a blowup or a shrink-down under the scaling $\u_r$ we indicate a limit of $\u_r$ as $r = r_j\ra 0+$, or respectively $r = r_j\ra \infty$ in $C_{loc}^2(\R^n\setminus\{0\};\R^m)$. The following lemma provides the compactness of the sequence $\u_r$ in order to have both the blowups and the shrink-downs. 

\begin{lemma}\label{lemma:u-s} Let $\u$ be a nonnegative solution of \eqref{eq:main} in $\R^n\setminus\{0\}$ with $1<\alpha\leq\frac{n+2}{n-2}$. If $\u$ satisfies \eqref{eq:sing}, then for each $1\leq i\leq m$, 
\begin{equation}\label{eq:u-sup-s}
u_i(x) \leq \left(\frac{\alpha-1}{2n}\right)^{-\frac{1}{\alpha-1}}|x|^{-\frac{2}{\alpha-1}} \quad\text{in }\R^n\setminus\{0\}.
\end{equation}
\end{lemma}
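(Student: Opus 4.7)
The plan is to combine the radial symmetry result obtained in the previous subsection with a one-dimensional Keller--Osserman-type estimate. First, since $\u$ satisfies \eqref{eq:sing}, Lemma \ref{lemma:rad} yields that $\u$ is radially symmetric, so each component admits the form $u_i(x) = f_i(|x|)$ for some nonnegative $f_i \in C^2((0,\infty))$. The key algebraic observation is that $u_i \leq |\u|$ pointwise, so each $u_i$ is a scalar supersolution of a single Lane--Emden equation:
\begin{equation*}
-\Delta u_i = |\u|^{\alpha-1}u_i \geq u_i^\alpha \quad \text{in } \R^n\setminus\{0\},
\end{equation*}
which, in radial form, reads $(r^{n-1}f_i'(r))' \leq -r^{n-1}f_i(r)^\alpha$ on $(0,\infty)$. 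Thus the vectorial bound \eqref{eq:u-sup-s} decouples into $m$ scalar ODE estimates, one for each component, with no interaction between them.

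From here I would proceed in three steps. First, I would establish monotonicity of each $f_i$: the map $r\mapsto r^{n-1}f_i'(r)$ is non-increasing by the above inequality, and if $f_i'(r_0)$ were strictly positive at some $r_0>0$, then integrating $f_i'(s) \geq r_0^{n-1}f_i'(r_0)\, s^{1-n}$ on $(\epsilon,r_0]$ would force $f_i(\epsilon) \to -\infty$ as $\epsilon \to 0^+$ (since $\int_0^{r_0} s^{1-n}\,ds = +\infty$ for $n \geq 3$), contradicting $f_i \geq 0$. Hence $f_i$ is non-increasing. Second, I would integrate the radial inequality from $\epsilon$ to $r$, discard the favorably-signed boundary term $\epsilon^{n-1}f_i'(\epsilon) \leq 0$, use monotonicity to bound $\int_0^r s^{n-1}f_i(s)^\alpha\,ds \geq (r^n/n)\,f_i(r)^\alpha$, and let $\epsilon \to 0^+$ to obtain
\begin{equation*}
f_i'(r) \leq -\frac{r}{n}\,f_i(r)^\alpha, \qquad r>0.
\end{equation*}
Third, multiplying through by $(1-\alpha)f_i^{-\alpha} < 0$ recasts the above as $(f_i^{1-\alpha})'(r) \geq \tfrac{\alpha-1}{n}\,r$. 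Integrating from $r_0$ to $r$, letting $r_0 \to 0^+$, and using $\liminf_{r_0 \to 0^+} f_i(r_0)^{1-\alpha} \geq 0$ (with the convention $(+\infty)^{1-\alpha} = 0$ in case $f_i$ blows up at the origin) would yield $f_i(r)^{1-\alpha} \geq \tfrac{(\alpha-1)r^2}{2n}$, which rearranges at once to the claimed bound.

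The only subtle point I anticipate is the passage $\epsilon \to 0^+$ in the first integration: a priori $r^{n-1}f_i'(r)$ might tend to $-\infty$ at the origin (for instance, when $\alpha < n/(n-2)$ the model profile $r^{-2/(\alpha-1)}$ already exhibits this behaviour). This is not an obstruction, however, because the boundary term $\epsilon^{n-1}f_i'(\epsilon)$ is non-positive and appears on the favorable side of the inequality, so dropping it only strengthens the estimate. Likewise, the liminf argument in the second integration gracefully accommodates both bounded and singular behavior of $f_i$ at the origin simultaneously, so no case split on whether the individual component $u_i$ (as opposed to $|\u|$) is bounded near the origin is required.
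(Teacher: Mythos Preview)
Your argument is correct and complete. Both your proof and the paper's proof rest on the same two ingredients---the radial symmetry from Lemma \ref{lemma:rad} and the scalar supersolution inequality $-\Delta u_i \geq u_i^\alpha$---but they deploy them in opposite orders and by different means. The paper works at the PDE level first: it sets $v=u_i^{1-\alpha}$, checks that $w(x)=v(x)-\frac{\alpha-1}{2n}|x|^2$ is subharmonic and bounded near the origin (using $\liminf_{x\to0}u_i>0$ from the extended maximum principle of Gilbarg--Serrin), and then applies the extended maximum principle once more to obtain $\inf_{\partial B_r}u_i\leq\bigl(\tfrac{\alpha-1}{2n}\bigr)^{-1/(\alpha-1)}r^{-2/(\alpha-1)}$; radial symmetry enters only at the last step to upgrade the infimum to a pointwise bound. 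You instead invoke radial symmetry at the outset to reduce to the ODE $(r^{n-1}f_i')'\leq -r^{n-1}f_i^\alpha$, derive monotonicity of $f_i$, and integrate twice. Your route is more elementary (no Gilbarg--Serrin machinery, only the ODE and a sign observation), whereas the paper's PDE comparison has the mild advantage that it yields the infimum bound on each sphere even before radial symmetry is used, making the role of symmetry more transparent as a final ingredient rather than a structural hypothesis.
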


\begin{proof} Let $u_i$ be a positive component of $\u$. Then, since $u_i$ is superharmonic in $\R^n\setminus\{0\}$, it follows from the extended maximum principle \cite[Theorem 1]{GilSer} that 
\begin{equation}\label{eq:u-liminf-s}
\liminf_{x\ra 0} u_i(x) > 0.
\end{equation}
Now let $v = u_i^{1-\alpha}$. Then $v$ satisfies, in $\R^n\setminus\{0\}$, 
\begin{equation*}
\Delta v \geq  \frac{\alpha}{\alpha-1} \frac{|\nabla v|^2}{v} + \alpha - 1.
\end{equation*}
Hence, for each $r>0$, the auxiliary function
\begin{equation*}
w(x) = v(x) - \frac{\alpha-1}{2n} |x|^2
\end{equation*}
becomes subharmonic in $B_r\setminus\{0\}$. Then by \eqref{eq:u-liminf-s}, $w$ is bounded around the origin, and thus, it follows from the extended maximum principle \cite[Theorem 1]{GilSer} that
\begin{equation*}
0 \leq \limsup_{x\ra 0} w(x) \leq \sup_{\partial B_r} w = \sup_{\partial B_r} v - \frac{\alpha-1}{2n} r^2. 
\end{equation*}
In terms of $u_i$, we obtain
\begin{equation*}
\inf_{\partial B_r} u_i \leq  \left(\frac{\alpha-1}{2n}\right)^{-\frac{1}{\alpha-1}}r^{-\frac{2}{\alpha-1}}.
\end{equation*}
Now the radial symmetry obtained in Lemma \ref{lemma:rad} yields \eqref{eq:u-sup-s}. 
\end{proof}

The next lemma gives the compactness of the sequence $\u_r$, and hence the existence of both blowup and shrink-down of $\u$. 

\begin{lemma}\label{lemma:u-cpt} Let $\u$ be a nonnegative solution of \eqref{eq:main} in $\R^n\setminus\{0\}$ with $1<\alpha\leq\frac{n+2}{n-2}$. Then there is some $0<\gamma<1$ such that $\u_r$ is uniformly bounded in $C^{2,\gamma}(K;\R^m)$ on each compact set $K\subset \R^n\setminus\{0\}$. 
\end{lemma}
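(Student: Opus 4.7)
The plan is to reduce the lemma to a single pointwise estimate of the form
\[
|\u(x)| \leq C_\u\,|x|^{-\frac{2}{\alpha-1}}\qquad\text{for all }x\in\R^n\setminus\{0\},
\]
with a constant $C_\u$ depending only on the solution $\u$. Such an estimate is sharply compatible with the scaling $\u_r(x)=r^{2/(\alpha-1)}\u(rx)$ under which \eqref{eq:main} is invariant, so it upgrades immediately to $|\u_r(x)|\leq C_\u|x|^{-2/(\alpha-1)}$ for every $r>0$ simultaneously. On any compact $K\subset\R^n\setminus\{0\}$ the right-hand side is a finite constant depending only on $K$ and $\u$, giving a uniform $L^\infty$ bound on the family $\{\u_r\}_{r>0}$. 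Feeding this bound into the system $-\Delta\u_r=|\u_r|^{\alpha-1}\u_r$ and bootstrapping via interior Schauder estimates applied to each component then yields a uniform $C^{2,\gamma}(K;\R^m)$ bound for some $0<\gamma<1$ depending only on $n$, $m$ and $\alpha$.

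To establish the pointwise estimate I would split according to the nature of the singularity at the origin. If $\u$ has a non-removable singularity at $0$, then Lemma \ref{lemma:u-s} applied to each positive component of $\u$ already delivers the bound, with a universal constant. Otherwise $|\u|$ is bounded in a neighborhood of $0$, so standard elliptic regularity extends $\u$ to a nonnegative $C^2$ solution of \eqref{eq:main} on all of $\R^n$; Theorem \ref{theorem:main-g} then identifies $\u$ as either the trivial solution (when $1<\alpha<\frac{n+2}{n-2}$) or as a bubble of the form \eqref{eq:u-g} (when $\alpha=\frac{n+2}{n-2}$, in which case $\tfrac{2}{\alpha-1}=\tfrac{n-2}{2}$). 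For the bubble a brief inspection of the explicit formula (checking the regimes $|x|\ll 1$, $|x|\sim|z|$, $|x|\gg 1$ separately) shows that $|\u(x)|\,|x|^{(n-2)/2}$ stays bounded on $\R^n\setminus\{0\}$, with a constant depending on the bubble parameters $(z,r,\ee)$.

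The main obstacle I anticipate is the removable-singularity case in the critical exponent, since Lemma \ref{lemma:u-s} does not apply there and one has to pay the price of a non-universal constant, obtained only after invoking the full classification of global solutions. The statement of the lemma does permit the $C^{2,\gamma}$ bound to depend on $\u$, so this is not a substantive loss. An alternative route that avoids Theorem \ref{theorem:main-g} would combine a Harnack inequality on dyadic annuli with the componentwise superharmonicity of $\u$ to reach the same pointwise bound, but the path through the classification of global solutions is considerably more direct.
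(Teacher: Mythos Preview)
Your proposal is correct and follows essentially the same route as the paper: split according to whether the singularity at the origin is removable, invoke Lemma \ref{lemma:u-s} in the non-removable case, invoke the global classification Theorem \ref{theorem:main-g} in the removable case (trivial for subcritical, bubble with $O(|x|^{2-n})$ decay for critical), and then bootstrap with interior Schauder estimates. Your packaging of the first two steps into the single scale-invariant estimate $|\u(x)|\leq C_\u\,|x|^{-2/(\alpha-1)}$ is a clean way to phrase what the paper does slightly more implicitly.
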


\begin{proof} If $\u$ does not satisfy \eqref{eq:sing}, then $\u$ is bounded around the origin, and the origin becomes a removable singularity. According to Theorem \ref{theorem:main-g}, if $1<\alpha<\frac{n+2}{n-2}$, $\u$ is trivial, while if $\alpha = \frac{n+2}{n-2}$, $\u$ is globally bounded and satisfies $|\u(x)| = O(|x|^{2-n})$ as $|x|\ra\infty$. Hence, in any case, $\u_r$ is bounded uniformly for all $r>0$ on a fixed compact subset of $\R^n\setminus\{0\}$.

On the other hand, if $\u$ satisfies \eqref{eq:sing}, Lemma \ref{lemma:u-s} implies that $\u_r$ is globally bounded in $\R^n\setminus\{0\}$. Thus, regardless of the removability of the singularity at the origin, we know that $\u_r$ is uniformly bounded in each compact subset of $\R^n\setminus\{0\}$. 

Now since $\u_r$ also solves \eqref{eq:main} in $\R^n\setminus\{0\}$, it follows from the interior regularity theory \cite[Theorem 6.2 and Theorem 6.19]{GT} that $\u_r$ is uniformly bounded in $C^{2,\gamma}(K;\R^m)$ on each compact set $K\subset \R^n\setminus\{0\}$, for some $0<\gamma<1$. This finishes the proof. 
\end{proof}

Let $\Phi(r,\u)$ be the balanced-energy-type functional defined by \eqref{eq:Phi}. Recall from Proposition \ref{proposition:Phi-monot} that $\Phi(r,\u)$ is monotone increasing in $r>0$ for $1<\alpha<\frac{n+2}{n-2}$, while it is constant for $\alpha = \frac{n+2}{n-2}$. 

\begin{lemma}\label{lemma:u-Phi} Let $\u$ be a nonnegative solution of \eqref{eq:main} in $\R^n\setminus\{0\}$ with $1<\alpha\leq\frac{n+2}{n-2}$, and let $\u_0$ (resp., $\u_\infty$) be a blowup (resp., a shrink-down) under the scaling $\u_r$. Then $\Phi(r,\u_0) = \Phi(0+,\u)$ (resp., $\Phi(r,\u_\infty) = \Phi(\infty,\u)$) for all $r>0$. In particular, both $\u_0$ and $\u_\infty$ are homogeneous of degree $-\frac{2}{\alpha-1}$, provided that $1<\alpha<\frac{n+2}{n-2}$. 
\end{lemma}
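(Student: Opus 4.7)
The plan is to combine three ingredients already available: the monotonicity (or constancy) of $\Phi(r,\u)$ from Proposition \ref{proposition:Phi-monot}, the scaling identity $\Phi(rs,\u)=\Phi(s,\u_r)$ in \eqref{eq:Phi-scale}, and the compactness afforded by Lemma \ref{lemma:u-cpt}. First I would observe that in the subcritical case $1<\alpha<\frac{n+2}{n-2}$ the map $r\mapsto \Phi(r,\u)$ is monotone by Proposition \ref{proposition:Phi-monot}(i), hence the limits $\Phi(0+,\u)$ and $\Phi(\infty,\u)$ exist in $[-\infty,\infty]$. In the critical case $\alpha=\frac{n+2}{n-2}$ there is nothing to prove here since $\Phi(\cdot,\u)\equiv \kappa(\u)$ by Proposition \ref{proposition:Phi-monot}(ii) and the conclusion on $\u_0,\u_\infty$ in the statement is vacuous.

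Next, fix $r>0$ and choose a sequence $r_j\to 0$ (respectively $r_j\to\infty$) along which $\u_{r_j}\to \u_0$ (respectively $\u_{r_j}\to \u_\infty$) in $C^2_{\mathrm{loc}}(\R^n\setminus\{0\};\R^m)$; such a sequence exists by Lemma \ref{lemma:u-cpt}. The scaling identity \eqref{eq:Phi-scale} then gives
\begin{equation*}
\Phi(rr_j,\u) = \Phi(r,\u_{r_j}).
\end{equation*}
The left-hand side converges to $\Phi(0+,\u)$ (respectively $\Phi(\infty,\u)$) as $j\to\infty$. For the right-hand side, inspecting the definition \eqref{eq:Phi} one sees that $\Phi(r,\cdot)$ depends continuously on its argument in the $C^1(\partial B_r;\R^m)$-topology, since every term is the integral over $\partial B_r$ of a continuous function of $\u$ and $\nabla\u$. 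The $C^2_{\mathrm{loc}}$ convergence of $\u_{r_j}$ thus gives $\Phi(r,\u_{r_j})\to \Phi(r,\u_0)$ (respectively $\Phi(r,\u_\infty)$). This establishes the identities
\begin{equation*}
\Phi(r,\u_0)=\Phi(0+,\u),\qquad \Phi(r,\u_\infty)=\Phi(\infty,\u),\qquad r>0.
\end{equation*}

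Finally, since the right-hand sides are independent of $r$, the functions $r\mapsto\Phi(r,\u_0)$ and $r\mapsto\Phi(r,\u_\infty)$ are both constant on $(0,\infty)$. In the subcritical range, Proposition \ref{proposition:Phi-monot}(i) (applied to $\u_0$ and $\u_\infty$ on any annulus $B_{r_2}\setminus\bar B_{r_1}$) then forces $\u_0$ and $\u_\infty$ to be homogeneous of degree $-\frac{2}{\alpha-1}$, as claimed.

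The only substantive point is the continuity of $\Phi(r,\cdot)$ under $C^2_{\mathrm{loc}}$ convergence; this is routine once one notes that restriction to the fixed compact hypersurface $\partial B_r$ converts the boundary integrals in \eqref{eq:Phi} into continuous functionals on $C^1(\partial B_r;\R^m)$. A secondary point worth recording explicitly is that the limits $\Phi(0+,\u)$ and $\Phi(\infty,\u)$ are automatically finite, because they equal $\Phi(r,\u_0)$ and $\Phi(r,\u_\infty)$, each finite for any fixed $r>0$ by the boundedness guaranteed in Lemma \ref{lemma:u-cpt}.
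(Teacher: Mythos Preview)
Your proof is correct and follows essentially the same approach as the paper: combine the scaling identity \eqref{eq:Phi-scale}, the compactness from Lemma \ref{lemma:u-cpt}, and the monotonicity in Proposition \ref{proposition:Phi-monot}(i) to pass to the limit in $\Phi(r,\u_{r_j})=\Phi(rr_j,\u)$, then invoke Proposition \ref{proposition:Phi-monot}(i) once more for the homogeneity. Your added remarks on the continuity of $\Phi(r,\cdot)$ under $C^2_{\mathrm{loc}}$ convergence and on the finiteness of the limiting energies make explicit what the paper leaves implicit.
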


\begin{proof} Since the argument for shrink-downs is the same, we shall only present it for blowups. Let $\u_0$ be a blowup with a sequence $r_j\ra 0+$. Then due to the scaling relation \eqref{eq:Phi-scale}, we have, for any $r>0$,
\begin{equation*}
\Phi(r,\u_0) = \lim_{j\ra\infty} \Phi(r,\u_{r_j}) = \lim_{j\ra\infty} \Phi(rr_j,\u) = \Phi(0+,\u),
\end{equation*}
where the existence of $\Phi(0+,\u)$ follows from the compactness of $\u_r$ (Lemma \ref{lemma:u-cpt}) and the monotonicity of $\Phi(r,\u)$ (Proposition \ref{proposition:Phi-monot} (i)). This proves the first assertion of Lemma \ref{lemma:u-Phi}. The second assertion on the homogeneity follows again from Proposition \ref{proposition:Phi-monot} (i). 
\end{proof}

\begin{lemma}\label{lemma:class-sub} Let $\u$ be a nonnegative solution of \eqref{eq:main} in $\R^n\setminus\{0\}$ with $1<\alpha \leq \frac{n+2}{n-2}$. Suppose further that $\u$ is homogeneous of degree $-\frac{2}{\alpha-1}$. 
\begin{enumerate}[(i)]
\item If $1<\alpha\leq\frac{n}{n-2}$, then $\u$ is trivial. 
\item If $\frac{n}{n-2} <\alpha \leq\frac{n +2}{n-2}$, then either $\u$ is trivial, or $\u$ is of the form \eqref{eq:u-s}.
\end{enumerate}
\end{lemma}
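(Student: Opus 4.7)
The plan is to reduce the problem to algebra on $\mathbb{S}^{n-1}$ by combining homogeneity with the radial symmetry result already obtained in Lemma \ref{lemma:rad}. If $\u \equiv 0$, both conclusions hold trivially, so assume $\u$ is nontrivial. Choose $\theta_0 \in \Ss^{n-1}$ with $\u(\theta_0) \neq 0$; then the homogeneity $\u(r\theta_0) = r^{-\frac{2}{\alpha-1}} \u(\theta_0)$ and $\alpha > 1$ force $|\u(r\theta_0)| \to \infty$ as $r \to 0^+$, so \eqref{eq:sing} is satisfied. By Lemma \ref{lemma:rad}, $\u$ is radially symmetric.

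Now pass to the cylindrical transformation $\v$ defined by \eqref{eq:cyl}. Homogeneity of degree $-\frac{2}{\alpha-1}$ makes $\v$ independent of $t$, and radial symmetry makes $\v$ independent of $\theta$. Hence $\v \equiv \v_0$ for some constant nonnegative vector $\v_0 \in \R^m$, and \eqref{eq:main-cyl} collapses to the single algebraic relation
\begin{equation*}
\bigl(|\v_0|^{\alpha-1} - \lambda\bigr)\v_0 = \0.
\end{equation*}

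In case (i), $1 < \alpha \leq \frac{n}{n-2}$ gives $\lambda \leq 0$ by \eqref{eq:lam-mu}. If $\v_0 \neq \0$, the equation above forces $|\v_0|^{\alpha-1} = \lambda \leq 0$, which is impossible since $|\v_0|^{\alpha-1} > 0$. Hence $\v_0 = \0$, which (transforming back) gives $\u \equiv \0$. In case (ii), $\frac{n}{n-2} < \alpha \leq \frac{n+2}{n-2}$ gives $\lambda > 0$. Either $\v_0 = \0$, and $\u \equiv \0$, or $|\v_0| = \lambda^{\frac{1}{\alpha-1}}$, in which case writing $\v_0 = \lambda^{\frac{1}{\alpha-1}}\ee$ for a unit nonnegative $\ee \in \R^m$ and inverting \eqref{eq:cyl} yields $\u(x) = \lambda^{\frac{1}{\alpha-1}} |x|^{-\frac{2}{\alpha-1}} \ee$, which is \eqref{eq:u-s}.

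There is no real obstacle here once Lemma \ref{lemma:rad} is invoked; the only point requiring care is the verification that the homogeneity hypothesis together with nontriviality ensures the blow-up condition \eqref{eq:sing}, which is needed to apply the radial-symmetry lemma. After that, the conclusion is a one-line algebraic alternative governed by the sign of $\lambda$.
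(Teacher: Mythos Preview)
Your proof is correct. The approach coincides with the paper's in case (ii): both verify the non-removable singularity condition \eqref{eq:sing} from homogeneity and nontriviality, invoke Lemma \ref{lemma:rad} to obtain radial symmetry, and then read off the algebraic alternative $|\v_0|^{\alpha-1} = \lambda$.

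The difference lies in case (i). You treat both cases uniformly by appealing to Lemma \ref{lemma:rad}, which is legitimate since that lemma is stated for the full range $1<\alpha\leq\frac{n+2}{n-2}$. The paper instead handles case (i) by a direct maximum-principle argument on the sphere: after passing to \eqref{eq:u0-pde-S}, the condition $\lambda\leq 0$ makes each component $v_i$ superharmonic on the compact manifold $\Ss^{n-1}$, forcing $v_i$ to be constant, and then the constant must vanish. Your route is cleaner in that it unifies the two cases, at the cost of invoking the moving-spheres machinery of Lemma \ref{lemma:rad} where an elementary argument suffices; the paper's route for case (i) is self-contained and avoids that dependency.
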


\begin{proof} Since $\u$ is homogeneous of degree $-\frac{2}{\alpha-1}$, the cylindrical transform $\v$ introduced in \eqref{eq:cyl} satisfies
\begin{equation}\label{eq:u0-pde-S}
\Delta_\theta \v- \lambda \v + |\v|^{\alpha-1}\v = 0\quad\text{on }\Ss^{n-1},
\end{equation}
where $\Delta_\theta$ is the Laplace-Beltrami operator, and $\lambda$ is given by \eqref{eq:lam-mu}. 

\begin{case} $1<\alpha\leq\frac{n}{n-2}$.
\end{case}
In view of \eqref{eq:lam-mu}, we have $\lambda\leq 0$. As a nonnegative solution of \eqref{eq:u0-pde-S}, we see that each component $v_i$ satisfies $\Delta_\theta v_i \leq 0$ on $\Ss^{n-1}$. This implies that $v_i$ does not attain any strict local minimum on $\Ss^{n-1}$. As $\Ss^{n-1}$ being a compact manifold, $v_i$ must be a constant. This argument holds for all $1\leq i\leq m$, which makes $\v$ a nonnegative, constant vector on $\Ss^{n-1}$. However, a nonnegative constant solution of \eqref{eq:u0-pde-S} must be trivial because $\lambda\leq 0$. Returning back to $\u$, it indicates that $\u$ is trivial on $\partial B_1$. As each of its component being nonnegative and superharmonic, $\u$ must be trivial in the whole domain, which proves Lemma \ref{lemma:class-sub} (i).

\begin{case} $\frac{n}{n-2}<\alpha<\frac{n+2}{n-2}$. 
\end{case}

Suppose that $\u$ is a nontrivial solution in the punctured space. Then by the non-negativity and the super-harmonicity of each component of $\u$, $|\u|$ is positive everywhere. As is homogeneous of degree $-\frac{2}{\alpha-1}$, $\u$ must have a non-removable singularity at the origin, i.e., \eqref{eq:sing} holds. By Lemma \ref{lemma:rad}, $\u$ is radially symmetric, whence $\u$ is a positive constant vector, $\a$, on $\partial B_1$. 

By \eqref{eq:u0-pde-S} we have $|\a| = \lambda^{\frac{1}{\alpha-1}}$. By the homogeneity, we see that $\u$ is of the form $\lambda^{\frac{1}{\alpha-1}} |x|^{-\frac{2}{\alpha-1}} \ee$ with some nonnegative unit $\ee\in\R^m$, proving Lemma \ref{lemma:class-sub} (ii). 
\end{proof}

We are in a position to prove Theorem \ref{theorem:main-s} (i) and (ii). 

\begin{proof}[Proof of Theorem \ref{theorem:main-s} (i) and (ii)] Let $\u_0$ and $\u_\infty$ be a blowup and, respectively, a shrink-down of $\u$. According to Lemma \ref{lemma:u-Phi}, both $\u_0$ and $\u_\infty$ are homogeneous of degree $-\frac{2}{\alpha-1}$. Hence, it follows from Lemma \ref{lemma:class-sub} (i) that if $1<\alpha\leq\frac{n}{n-2}$, both $\u_0$ and $\u_\infty$ are trivial. This in turn yields by Lemma \ref{lemma:u-Phi} that $\Phi(0+,\u) = \Phi(\infty,\u) = 0$. Due to the monotonicity of $\Phi(r,\u)$, $\Phi(r,\u) = 0$ for all $r>0$. Thus, by Proposition \ref{proposition:Phi-monot} (i), $\u$ is homogeneous of degree $-\frac{2}{\alpha-1}$. Theorem \ref{theorem:main-s} (i) is now an immediate consequence of Lemma \ref{lemma:class-sub} (i). 

Now let us consider the case $\frac{n}{n-2}<\alpha<\frac{n+2}{n-2}$. By Lemma \ref{lemma:u-Phi} and Lemma \ref{lemma:class-sub} (ii), any blowup $\u_0$ is either trivial or of the form \eqref{eq:u-s}. If $\u_0$ is trivial, then clearly $\Phi(r,\u_0) = 0$ for all $r>0$, which along with Lemma \ref{lemma:u-Phi} implies that $\Phi(0+,\u) = 0$. On the other hand, if $\u_0$ is of the form \eqref{eq:u-s}, then a simple computation shows that $\Phi(r,\u_0) = -\bar\lambda$ for all $r>0$, with $\bar\lambda$ given as in \eqref{eq:lamb}. Thus, again from Lemma \ref{lemma:u-Phi} it follows that $\Phi(0+,\u) = - \bar\lambda$. The converse statement is obviously true, whence we have proved that $\Phi(0+,\u) \in \{-\bar\lambda,0\}$, and $\Phi(0+,\u) = 0$ if and only if all the blowups are trivial, while $\Phi(0+,\u) = -\bar\lambda$ if and only if all the blowups are of the form \eqref{eq:u-s}. 

Further, the same assertion holds for any shrink-down $\u_\infty$, proving that $\Phi(\infty,\u) \in \{-\bar\lambda,0\}$, and $\Phi(\infty,\u) = 0$ if and only if all the shrink-downs are trivial, while $\Phi(\infty,\u) = -\bar\lambda$ if and only if all the shrink-downs are of the form \eqref{eq:u-s}. 

Now if $\Phi(0+,\u) = 0$, then since $\Phi(r,\u)$ is nondecreasing in $r$ and $\Phi(\infty,\u)\in \{-\bar\lambda,0\}$, we must have $\Phi(r,\u) = 0$ for all $r>0$. Hence, by Lemma \ref{lemma:u-Phi} and Lemma \ref{lemma:class-sub} (ii), $\u$ is either trivial or of the form \eqref{eq:u-s}. However, the latter yields that $\Phi(0+,\u) = -\bar\lambda$, a contradiction. Thus, $\u$ must be trivial. Of course, the converse is also true. 

Similarly, $\Phi(\infty,\u) = -\bar\lambda$ implies that $\u$ is of the form \eqref{eq:u-s}. This finishes the proof of Theorem \ref{theorem:main-s} (ii). 
\end{proof}

The analysis on the case $\alpha = \frac{n+2}{n-2}$ is more subtle. Our approach relies on the Pohozaev invariants of which the first one  $\kappa(\u)$ was introduced in \eqref{eq:kappa}.  In the following we focus on the second Pohozaev invariant $\kappa_*(\u)$,  which was briefly introduced in Remark \ref{remark:Phi*-const2}. More importantly, we shall observe that this second invariant appears solely due to the multiplicity of the components of \eqref{eq:main}. 

\begin{lemma}\label{lemma:class-crit} Let $\u$ be a nonnegative solution of \eqref{eq:main} in $\R^n\setminus\{0\}$ with $\alpha = \frac{n+2}{n-2}$. Then $\Phi(r,\u)$ and $\Phi_*(r,\u)$ in \eqref{eq:Phi} and \eqref{eq:Phi*} are well-defined, and there are constants $\kappa(\u)$ and $\kappa_*(\u)$ satisfying \eqref{eq:kappa} and respectively \eqref{eq:kappa*}. Moreover, the inequalities \eqref{eq:class-crit-kappa} and \eqref{eq:class-crit-kappa*} holds and the equalities of the lower bounds only occur simultaneously. 
\end{lemma}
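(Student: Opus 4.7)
The plan is as follows. First, $\kappa(\u)$ satisfying \eqref{eq:kappa} is supplied by Proposition \ref{proposition:Phi-monot}(ii), and $\kappa_*(\u)$ satisfying \eqref{eq:kappa*} by Proposition \ref{proposition:Phi*-const} (whose proof is deferred to Section \ref{section:asym-rad}). The analysis then splits on whether $\u$ has a removable or non-removable singularity at the origin. In the removable case, $\u$ extends to a smooth nonnegative solution on $\R^n$ and is therefore a bubble $\u = u\ee$ with scalar $u$ and fixed unit $\ee\geq 0$ by Theorem \ref{theorem:main-g}(ii); since all integrands in \eqref{eq:Phi} and \eqref{eq:Phi*} are bounded near the origin while the combined prefactors make every term vanish as $r\to 0^+$, one obtains $\kappa=\Phi(0^+,\u)=0$ and $\kappa_*=\Phi_*(0^+,\u)=0$, trivially fitting both bounds.

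In the non-removable case, Lemma \ref{lemma:rad} gives that $\u$ is radially symmetric, so in cylindrical coordinates $\v(t)$ solves $\v''=\lambda\v-|\v|^{\alpha-1}\v$. Multiplying the $j$-th equation by $v_i$ and subtracting the $i$-th times $v_j$ yields $v_iv_j'-v_jv_i'=k_{ij}$ constant, and then \eqref{eq:2nd-poho} gives $\kappa_*=-\sum_{i<j}k_{ij}^2\leq 0$, which is the upper bound in \eqref{eq:class-crit-kappa*}. Dotting the ODE with $\v'$ and integrating produces the energy identity $|\v'|^2=H(\rho)+\kappa$ with $\rho=|\v|$ and $H(\rho)=\frac{(n-2)^2}{4}\rho^2-\frac{n-2}{n}\rho^{\frac{2n}{n-2}}$, and combining with the Lagrange identity $|\v|^2|\v'|^2-(\v\cdot\v')^2=-\kappa_*$ gives $(\rho')^2=H(\rho)+\kappa+\kappa_*/\rho^2=:P(\rho)$. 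The constraint $|\v'|^2\geq 0$ forces $\kappa+H(\rho(t))\geq 0$ for every $t$, and since $H(\rho)\leq H(\rho_0)=\frac{2}{n}(\frac{n-2}{2})^n$ with $\rho_0=(\frac{n-2}{2})^{\frac{n-2}{2}}$, taking an infimum over $t$ produces \eqref{eq:class-crit-kappa} with equality precisely when $\rho\equiv\rho_0$, i.e., when $\u$ is the homogeneous solution \eqref{eq:u-s-2}.

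For the lower bound on $\kappa_*$ in \eqref{eq:class-crit-kappa*}, equivalent to $P(\rho_0)\geq 0$, I would employ a great-circle argument to reduce to the scalar Fowler setting. Writing $\w=\v/\rho$, the projection of the $\v$-ODE along $\w$ and its tangent complement produces $\w''=-|\w'|^2\w-2(\rho'/\rho)\w'$ with $|\w'|^2=|\kappa_*|/\rho^4$ from the Pohozaev identity \eqref{eq:2nd-poho-1}. Thus $\mathrm{span}(\w,\w')$ is time-independent, so $\w$ traces a fixed great circle $C\subset\Ss^{m-1}$; parameterizing $C$ by arc length $\theta$ converts the tangent part of the equation into $\theta''=-2(\rho'/\rho)\theta'$, solved by $\theta'(t)=c/\rho^2(t)$ with $c$ of constant sign, so whenever $\kappa_*\neq 0$, $\theta$ is strictly monotone and unbounded on $\R$ because $\rho$ is bounded. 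On the other hand, the strong maximum principle forces each nontrivial component of $\v$ to be strictly positive, so $\w$ must remain in the open positive orthant of $\Ss^{m-1}$. But $C$ is invariant under antipode, so for any $u\in C$ both $u$ and $-u$ lie in $C$, and nonnegativity of all coordinates of both would force $u=0$, contradicting $|u|=1$. Hence monotone traversal of $C$ would exit the orthant, forcing $\kappa_*=0$; the equation becomes the scalar Fowler $(\rho')^2=H(\rho)+\kappa$, whose bounded positive solutions always have $\rho_0$ in their range, yielding $P(\rho_0)=H(\rho_0)+\kappa\geq 0$.

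Finally, for simultaneity of the equalities: $\kappa=-H(\rho_0)$ forces $\rho\equiv\rho_0$, which gives $\kappa_*=0=-(\frac{n-2}{2})^{n-2}(\kappa+H(\rho_0))$, so $\kappa_*$ attains its lower bound; conversely, equality in the $\kappa_*$ lower bound together with $\kappa_*=0$ forces $\kappa+H(\rho_0)=0$. The main obstacle is precisely the estimate $P(\rho_0)\geq 0$, because the energy identity alone supplies only the $\kappa$-bound; the geometric great-circle argument, resting on the non-negativity assumption and the strong maximum principle, provides the crucial $\kappa_*=0$ that reduces the lower bound on $\kappa_*$ to the scalar Fowler analysis.
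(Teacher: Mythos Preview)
Your proof is correct but takes a genuinely different route from the paper for the lower bound on $\kappa_*$, and in fact establishes something strictly stronger.

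The paper argues the lower bound in \eqref{eq:class-crit-kappa*} by contradiction: assuming $\kappa_* < -f((\tfrac{n-2}{2})^{n-2})$, it deduces from \eqref{eq:kappa*-cyl-rad-re} that $g(t)>(\tfrac{n-2}{2})^{n-2}$ for all $t$, whence each $v_i$ satisfies $v_i''\leq -\delta v_i$ on $\R$; since each $v_i$ is bounded (Lemma~\ref{lemma:u-s}), this concavity forces $v_i(t)\to a_i$ and $v_i''(t)\to 0$, which is incompatible with $v_i''\leq -\delta v_i$. The simultaneity of the equalities is then handled by a similar limiting argument.

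Your approach instead exploits the nonnegativity hypothesis geometrically. Writing $\w=\v/|\v|$ and observing that $\w''\in\mathrm{span}(\w,\w')$, you confine $\w$ to a fixed great circle on $\Ss^{m-1}$; since each nontrivial $v_i$ is strictly positive by the strong maximum principle, $\w$ cannot traverse a full great circle (which contains antipodal pairs), and so the angular velocity $|\w'|^2=|\kappa_*|/\rho^4$ must vanish. This yields $\kappa_*=0$ outright for every nonnegative solution in $\R^n\setminus\{0\}$, from which the lower bound in \eqref{eq:class-crit-kappa*} follows trivially via the already established bound $\kappa\geq -H(\rho_0)$. Note in particular that this shows the example in Remark~\ref{remark:Phi*-const3} is \emph{not} nonnegative: the angle $\sqrt{-\kappa_*}\int_{t_0}^t \rho^{-2}\,ds$ is unbounded since $\rho$ is bounded, so the components change sign.

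Two minor points. First, in your $\kappa$-equality clause, ``$\rho\equiv\rho_0$'' alone does not immediately give the homogeneous solution; you still need $|\v'|=0$, which follows because $|\v'|^2=H(\rho_0)+\kappa=0$ (equivalently, from $\kappa_*=0$ and $\rho'=0$). Second, the appeal to ``bounded positive Fowler solutions always have $\rho_0$ in their range'' is correct but unnecessary: once $\kappa_*=0$, the inequality $P(\rho_0)=H(\rho_0)+\kappa\geq 0$ is exactly your earlier bound on $\kappa$.
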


\begin{proof}
The proof can be divided into two cases; first we consider the case where $\u$ is not rotationally symmetric, and then we treat the other case. We shall prove the equivalent statements for the cylindrical transformation $\v$. Since $\v$ will be fixed throughout the proof, we shall omit the dependence of $\Psi$, $\Psi_*$, $\kappa$ and $\kappa_*$ on $\v$ here. 

Suppose that $\u$ is not rotationally symmetric. Due to Lemma \ref{lemma:rad}, $\u$ has a removable singularity at the origin. Thus, its cylindrical transformation $\v$, given as in \eqref{eq:cyl}, satisfies 
\begin{equation}\label{eq:class-crit-1}
|\v(t,\theta)| + |\partial_t\v(t,\theta)| \leq Ce^{-\frac{n-2}{2}t}\quad\text{on }\Ss^{n-1}, 
\end{equation}
as $t\ra\infty$, with some constant $C>0$ independent of $t$. This combined with \eqref{eq:kappa-cyl} implies that 
\begin{equation}\label{eq:class-crit-2}
\kappa = \lim_{t\ra\infty} \Psi(t) = 0.
\end{equation} 

On the other hand, the estimate \eqref{eq:class-crit-1} also ensures the well-definedness of $\Psi_*(t)$ given by \eqref{eq:Psi*} for all $t\in\R$. To prove that $\Psi_*(t)$ is constant for any $t\in\R$, we need to compute the derivatives of $g$, given by \eqref{eq:g}. Utilizing \eqref{eq:main-cyl}, \eqref{eq:kappa-cyl} and \eqref{eq:class-crit-2} one can verify that 
\begin{equation*}
g'' = \frac{2}{n\omega_n} \int_{\Ss^{n-1}} \left( \frac{(n-2)^2}{2} |\v|^2 + 2|\nabla_\theta\v|^2 - \frac{2n-2}{n} |\v|^{\frac{2n}{n-2}} \right) \,d\theta,
\end{equation*}
from which it follows that
\begin{equation}\label{eq:Psi*'}
\begin{split}
&\Psi_*'(t) \\
&= g' \left( \frac{g''}{2} - \frac{(n-2)^2}{2} g  - \frac{2}{n\omega_n} \int_{\Ss^{n-1}} \left( |\nabla_\theta \v|^2 - \frac{n-1}{n} |\v|^{\frac{2n}{n-2}} \right)\,d\theta\right)  \\
&= \frac{g'}{n\omega_n}\int_{\Ss^{n-1}} \left( |\partial_t \v|^2 -\frac{(n-2)^2}{4} |\v|^2  - |\nabla_\theta \v|^2 + \frac{n-2}{n} |\v|^{\frac{2n}{n-2}} \right) \,d\theta\\
&= 0.
\end{split}
\end{equation}
Thus, $\Psi_*(t)$ is constant for any $t\in\R$, and there must exist a constant $\kappa_*(\v)$ such that \eqref{eq:kappa*-cyl} holds for all $t$. Moreover, one can also verify from \eqref{eq:class-crit-1} that 
\begin{equation*}
\kappa_* =  \lim_{t\ra\infty} \Psi_*(t) = 0.
\end{equation*}
This proves the lemma for the case where $\u$ is rotationally symmetric. 

Next we consider the case $\u$ is rotationally symmetric, so that the cylindrical transformation $\v$ becomes a function of $t$ only. In this case, we have already observed that \eqref{eq:|v|-poho} holds with $\kappa_*$ given by \eqref{eq:2nd-poho}. Note that under the rotational symmetry of $\v$, $g$ as in \eqref{eq:g} is identical to $|\v|^2$. Hence, one can easily observe from \eqref{eq:Psi*} and \eqref{eq:|v|-poho} that
\begin{equation}\label{eq:kappa*-cyl-rad}
\Psi_*(t) = \frac{(g')^2}{4} - \frac{(n-2)^2}{4} g^2 - \kappa g + \frac{n-2}{n} g^{\frac{2n-2}{n-2}} = \kappa_*,
\end{equation} 
as desired. 

Let us now prove the bounds in \eqref{eq:class-crit-kappa} and \eqref{eq:class-crit-kappa*}. Since we have already verified above that $\kappa = \kappa_* = 0$ if $\v$ is not rotationally symmetric, it suffices to consider the situation where $\v$ is rotationally symmetric. Then one can follow the derivation of \eqref{eq:2nd-poho} and verify that $\kappa_* \leq 0$. Hence, we are only left with proving the lower bounds of $\kappa$ and $\kappa_*$. 

Set 
\begin{equation*}
f(s) = \frac{(n-2)^2}{4} s^2 - \frac{n-2}{n} s^{\frac{2n-2}{n-2}} + \kappa s,
\end{equation*}
and  let us rephrase the second identity in \eqref{eq:kappa*-cyl-rad} as
\begin{equation}\label{eq:kappa*-cyl-rad-re}
\frac{(g')^2}{4} = f(g) + \kappa_*.
\end{equation}
Utilizing $\kappa_* \leq 0$ in the identity above, we see that $f(g) \geq 0$. Since either $g(t)=0$ and $g(t)>0$ for all $t$, and $g(t)=0$ yields $\kappa=0$, we can focus on the case $g(t)>0$ for all $t$. Then $\frac{1}{g} f(g) \geq 0$ as well, from which it follows that 
\begin{equation*}
\kappa \geq - \frac{(n-2)^2}{4} g + \frac{n-2}{n} g^{\frac{n}{n-2}} \geq -\frac{2}{n} \left( \frac{n-2}{2} \right)^n.
\end{equation*}
This verifies the lower bound \eqref{eq:class-crit-kappa} of $\kappa$. 

To verify the the lower bound \eqref{eq:class-crit-kappa*} of $\kappa_*$, let us remark that 
\begin{equation*}
\left( \frac{2}{n} \left(\frac{n-2}{2}\right)^n + \kappa \right)\left(\frac{n-2}{2}\right)^{n-2} = f \left( \left(\frac{n-2}{2}\right)^{n-2} \right). 
\end{equation*}
Now suppose towards a contradiction that there is a solution $\v$ having $\kappa_* < -f ( (\frac{n-2}{2})^{n-2})$. Then it follows from \eqref{eq:kappa*-cyl-rad-re} that $\min\{g (t) : t\in\R\} > (\frac{n-2}{2})^{n-2}$, or equivalently, $\min\{ |\v(t)| : t\in\R\} > (\frac{n-2}{2})^{\frac{n-2}{2}}$. In view of \eqref{eq:main-cyl}, this implies that 
\begin{equation}\label{eq:class-crit-kappa*-1}
v_i'' = \frac{(n-2)^2}{4} v_i - |\v|^{\frac{4}{n-2}} v_i \leq - \delta v_i,
\end{equation}
for each $1\leq i\leq m$, where $\delta = \min\{|\v(t)| : t\in\R\} - (\frac{n-2}{2})^{\frac{n-2}{2}} > 0$. Hence, $v_i$ is a concave function. However, \eqref{eq:u-sup-s} shows that $v_i$ is uniformly bounded for all $t$, which indicates that $v_i(t) \ra a_i$ and $v_i''(t) \ra 0$ as $t\ra\infty$ for some $a_i>0$. However, this is a contradiction against \eqref{eq:class-crit-kappa*-1}, which proves the lower bound \eqref{eq:class-crit-kappa*} of $\kappa_*$. 

Finally, let us investigate the scenario when the equalities of the lower bounds in \eqref{eq:class-crit-kappa} and \eqref{eq:class-crit-kappa*} hold. Suppose that the equality of the lower bound in \eqref{eq:class-crit-kappa*} occur. That is, 
\begin{equation}\label{eq:class-crit-kappa*-2}
\kappa + \left(\frac{n-2}{2}\right)^{2-n} \kappa_* = - \frac{2}{n} \left(\frac{n-2}{2}\right)^n.
\end{equation}
Arguing similarly as above, one can deduce that $\min\{|\v(t)|:t\in\R\} \geq (\frac{n-2}{2})^{\frac{n-2}{2}}$ and $v_i'' \leq 0$ in $\R$ for each $1\leq i\leq m$. Again $v_i$ is a concave function that is uniformly bounded in $\R$, so $v_i(t) \ra a_i$, for some $a_i\in\R$, and $v_i''(t) \ra 0$ as $t\ra\infty$. Thus, $|\v(t)|\ra |\a|$ with $\a = (a_1,\cdots,a_m)$, and it follows from $v_i''(t) \ra 0$ and the first equality in \eqref{eq:class-crit-kappa*-1} that $|\a| = (\frac{n-2}{2})^{\frac{n-2}{2}}$. On the other hand, we also have $v_i'(t)\ra 0$ as $t\ra\infty$, so sending $t\ra\infty$ in the second equality of \eqref{eq:kappa-cyl} yields that 
\begin{equation*}
\kappa = \lim_{t\ra\infty} \left( |\v'(t)|^2 - \frac{(n-2)^2}{4} |\v(t)|^2 + \frac{n-2}{n} |\v(t)|^{\frac{2n}{n-2}} \right) = -\frac{2}{n} \left(\frac{n-2}{2}\right)^n.
\end{equation*}
Thus, \eqref{eq:class-crit-kappa*-2} forces $\kappa_* = 0$, and the final assertion of the lemma is proved. 
\end{proof} 

Let us finish this section by proving Theorem \ref{theorem:main-s} (iii).

\begin{proof}[Proof of Theorem \ref{theorem:main-s} (iii)] 
The well-definedness and the bounds of $\kappa$ and $\kappa_*$ are proved in Lemma \ref{lemma:class-crit}. The other assertions can be proved as follows. 

First consider the assertion (iii)-(a). If $\u$ is not radially symmetric, then by Lemma \ref{lemma:rad}, $\u$ has a removable singularity at the origin, as desired. On the other hand, if $\u$ is radially symmetric, one can deduce from \eqref{eq:|v|-poho} that the cylindrical transformation $\v$, which is now a function of $t$ only, satisfies 
\begin{equation}\label{eq:kappa-cyl-rad-reg}
\left(\frac{d|\v|}{dt} \right)^2 = \frac{(n-2)^2}{4} |\v|^2 - \frac{n-2}{n} |\v|^{\frac{2n}{n-2}}.
\end{equation}
Hence, the classical work such as \cite{F} or \cite{CGS} applies to $|\v|$, proving the `only if' part of the assertion (iii)-(a). The `if' part can be verified through a direct computation.

Let us move on to the case $\kappa^2 + \kappa_*^2 > 0$. From the assertion (iii)-(a), we see that $\u$ must have a non-removable singularity at the origin. According to Lemma \ref{lemma:rad}, $\u$ is radially symmetric, so one can follow the computation in Section \ref{section:intro} and deduce \eqref{eq:|v|-poho}. 

Finally, assume that $\kappa = -\frac{2}{n} (\frac{n-2}{2})^n$ and $\kappa_* = 0$. It follows from \eqref{eq:|v|-poho} that 
\begin{equation*}
\left(\frac{d|\v|}{dt}\right)^2 - \frac{(n-2)^2}{4}|\v|^2 + \frac{n-2}{n} |\v|^{\frac{2n}{n-2}} + \frac{2}{n} \left(\frac{n-2}{2}\right)^n = 0,
\end{equation*}
whence $|\v|$ has to be constant in $\R$, and the constant has to be $(\frac{n-2}{2})^{\frac{n-2}{2}}$. In terms of $\u$ this implies that $\u$ is homogeneous of degree $-\frac{n-2}{2}$ and is of the form \eqref{eq:u-s-2}. This constitutes the `only if' part of the assertion (iii)-(c). The `if' part follows easily from a direct computation. 
\end{proof}


\section{A Priori Estimate and Harnack-Type Inequality for Local Solutions}\label{section:upper}

In this section, we prove {\it a priori} upper bounds for local solutions of \eqref{eq:main} in $B_1\setminus\{0\}$ with $1<\alpha \leq \frac{n+2}{n-2}$ which further allows us to derive related Harnack inequalities, interior gradient estimates and the compactness of scaling functions. Our analysis is divided into two cases, according to the subcritical range $1<\alpha<\frac{n+2}{n-2}$ and the critical range $\alpha = \frac{n+2}{n-2}$. The former is based on the non-existence of the smooth, positive, global solution in Theorem \ref{theorem:main-g} (i) along with a blowup argument. The latter uses the method of moving spheres presented in the previous section, essentially following the work of Li and Zhang \cite{LZ}. 


\subsection{A Priori Bound for $1<\alpha<\frac{n+2}{n-2}$}\label{subsection:upper-sub}

We begin with the upper bound for the subcritical case, which is (much) simpler than the critical case. 

\begin{proposition}\label{proposition:sup-sub} Let $1<\alpha<\frac{n+2}{n-2}$ and suppose that $\v\in C^2(B_1;\R^m)\cap C(\bar{B}_1;\R^m)$ is a nonnegative solution of
\begin{equation}\label{eq:main-reg}
-\Delta \v = |\v|^{\alpha-1}\v \quad\text{in }B_1.
\end{equation}
Then there exists $C>0$, depending only on $n$, $m$ and $\alpha$, such that
\begin{equation}\label{eq:sup}
|\v(x)| \leq C(1-|x|)^{-\frac{2}{\alpha-1}} \quad\text{in }B_1. 
\end{equation}
\end{proposition}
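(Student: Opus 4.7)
The plan is a standard rescaling/blowup argument, with the Liouville result in Theorem \ref{theorem:main-g} (i) serving as the obstruction to a nontrivial limit. Suppose \eqref{eq:sup} fails for every choice of a universal constant. Then for each $k\in\N$ there exist a nonnegative solution $\v_k$ of \eqref{eq:main-reg} and a point $x_k\in B_1$ satisfying $(1-|x_k|)^{2/(\alpha-1)}|\v_k(x_k)|\geq k$. Since $\v_k$ is continuous up to $\partial B_1$, the weighted function $x\mapsto(1-|x|)^{2/(\alpha-1)}|\v_k(x)|$ is continuous on $\bar{B}_1$ and vanishes on $\partial B_1$, so it attains its maximum at some interior point $y_k\in B_1$. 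Set
\[
T_k:=(1-|y_k|)^{2/(\alpha-1)}|\v_k(y_k)|\geq k\ra\infty.
\]

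Next, with $M_k=|\v_k(y_k)|$ and $\lambda_k=M_k^{(\alpha-1)/2}$, introduce the rescaling
\[
\w_k(x):=M_k^{-1}\v_k(y_k+\lambda_k^{-1}x),\qquad |x|<\lambda_k(1-|y_k|)=T_k^{(\alpha-1)/2}.
\]
The invariance of \eqref{eq:main-reg} under this scaling (which uses exactly the exponent $2/(\alpha-1)$) gives $-\Delta\w_k=|\w_k|^{\alpha-1}\w_k$ on the above ball, and by construction $|\w_k(0)|=1$. Moreover, for $|x|\leq T_k^{(\alpha-1)/2}/2$ the point $y:=y_k+\lambda_k^{-1}x$ satisfies $1-|y|\geq(1-|y_k|)/2$, so the maximality of $y_k$ yields
\[
|\w_k(x)|=M_k^{-1}|\v_k(y)|\leq M_k^{-1}T_k(1-|y|)^{-2/(\alpha-1)}\leq 2^{2/(\alpha-1)}.
\]

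Since $T_k^{(\alpha-1)/2}/2\ra\infty$, the $\w_k$ are eventually defined on every fixed ball of $\R^n$ and uniformly bounded there; interior Schauder estimates applied to $-\Delta\w_k=|\w_k|^{\alpha-1}\w_k$ give uniform $C^{2,\gamma}_{\mathrm{loc}}(\R^n)$ bounds. Extracting a diagonal subsequence, $\w_k\to \w$ in $C^2_{\mathrm{loc}}(\R^n;\R^m)$ to a nonnegative entire $C^2$ solution $\w$ of $-\Delta\w=|\w|^{\alpha-1}\w$ on $\R^n$ with $|\w(0)|=1$. In the subcritical range $1<\alpha<\frac{n+2}{n-2}$, Theorem \ref{theorem:main-g} (i) forces $\w\equiv\0$, contradicting $|\w(0)|=1$ and proving \eqref{eq:sup}. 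The only nontrivial aspect of this outline is the choice of base point: picking $y_k$ as a maximizer of the \emph{weighted} quantity $(1-|x|)^{2/(\alpha-1)}|\v_k|$ (rather than of $|\v_k|$) is what guarantees that the rescaled sequence remains uniformly controlled on the exhausting balls $B_{T_k^{(\alpha-1)/2}/2}$, enabling the passage to the Liouville limit.
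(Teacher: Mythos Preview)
Your argument is correct. The paper takes a slightly different shortcut: rather than running the blowup argument directly on the vector-valued $\v$, it passes to the scalar sum $w=v_1+\cdots+v_m$, observes that $\frac{1}{c}w^\alpha\leq -\Delta w\leq c\,w^\alpha$, and then invokes \cite[Theorem~2.1]{PQS} for the resulting scalar inequality. Of course, the proof of that reference is precisely the weighted-maximum/rescaling/Liouville scheme you wrote out, so the underlying mechanism is identical. Your version has the advantage of being self-contained within the paper, since it uses Theorem~\ref{theorem:main-g}\,(i) as the Liouville input and avoids the external citation; it is also exactly the template the paper itself follows in the critical case (Proposition~\ref{proposition:har-crit}). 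The paper's reduction-to-scalar trick, on the other hand, makes the point that for this particular estimate the vector structure is irrelevant and only the scalar Liouville theorem is needed.
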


\begin{proof} 
Note that $w = v_1 + \cdots + v_m$ satisfies 
\begin{equation*}
\frac{1}{c} w^\alpha \leq -\Delta w \leq c w^\alpha,
\end{equation*} 
for some $c>1$, depending only on $m$ and $\alpha$. Thus, we can follow the proof of \cite[Theorem 2.1]{PQS} and obtain the desired inequality. We omit the details. 
\end{proof}


\subsection{A Harnack-Type Inequality for $\alpha=\frac{n+2}{n-2}$}\label{subsection:upper-crit}

Our approach to achieve the Harnack-type inequality for $\alpha=\frac{n+2}{n-2}$ follows the line of the scalar case in Li and Zhang \cite[Lemma 5.1]{LZ}. In our system setting, the problem becomes very sensitive on the number of nonzero components, and we modify the proof of \cite[Lemma 5.1]{LZ} in this direction. 

\begin{proposition}\label{proposition:har-crit} Let $\v\in C^2(B_2;\R^m)\cap C(\bar{B}_2;\R^m)$ be a nonnegative solution of
\begin{equation}\label{eq:main-crit}
-\Delta \v = |\v|^{\frac{4}{n-2}}\v\quad\text{in }B_2.
\end{equation}
Then, there exists $C>0$ depending only on $n$ and $m$, such that
\begin{equation}\label{eq:har-crit}
\left( \min_{i \in I_m} \inf_{\partial B_2} v_i\right) |\v(x)| \leq C (1-|x|)^{-\frac{n-2}{2}} \quad\text{in }B_1,
\end{equation}
where $I_m$ is the set of indices $1\leq i\leq m$ such that $v_i$ is nontrivial. 
\end{proposition}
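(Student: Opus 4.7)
The plan is to argue by contradiction, adapting the scalar blow-up strategy of Li--Zhang \cite{LZ} to the vector setting and then invoking the Liouville-type classification in Theorem~\ref{theorem:main-g}(ii). Suppose no universal constant $C=C(n,m)$ makes the estimate hold. Then there exist nonnegative solutions $\v_k\in C^2(B_2;\R^m)\cap C(\bar{B}_2;\R^m)$ of \eqref{eq:main-crit} and points $x_k\in B_1$ with
$$A_k := \Bigl(\min_{i\in I_m^{(k)}}\inf_{\partial B_2} v_{k,i}\Bigr)\,|\v_k(x_k)|\,(1-|x_k|)^{(n-2)/2}\to\infty,$$
where $I_m^{(k)}$ denotes the set of nontrivial components of $\v_k$ (the outer minimum is assumed positive, else the inequality is trivial). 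I would apply a doubling/selection argument to the scalar weight $h_k(x):=(1-|x|)^{(n-2)/2}|\v_k(x)|$ on $\bar{B}_1$ (which vanishes on $\partial B_1$), producing $\tilde x_k\in B_1$ that near-maximizes $h_k$ and for which $|\v_k(y)|\leq 2^{(n-2)/2} M_k$ on $B_{\tau_k/2}(\tilde x_k)$, where $M_k:=|\v_k(\tilde x_k)|$ and $\tau_k:=1-|\tilde x_k|$.

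Setting $\lambda_k:=M_k^{2/(n-2)}$, the rescaling $w_k(y):=M_k^{-1}\v_k(\tilde x_k+\lambda_k^{-1}y)$ solves the same critical equation on $B_{R_k}(0)$ with $R_k:=\lambda_k\tau_k/2$, is uniformly bounded by $2^{(n-2)/2}$, and satisfies $|w_k(0)|=1$. A short case analysis using the super-harmonicity of each $v_{k,i}$ — which forces $\inf_{\bar{B}_2}v_{k,i}=\inf_{\partial B_2}v_{k,i}$ and hence $|\v_k|\geq B_k:=\min_{i\in I_m^{(k)}}\inf_{\partial B_2}v_{k,i}$ pointwise on $\bar{B}_2$ — rules out the scenario where $M_k$ remains bounded while $B_k\to\infty$; hence $M_k\to\infty$ and $R_k\to\infty$. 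Standard interior elliptic regularity for the critical equation then delivers a $C^2_{loc}(\R^n;\R^m)$ subsequential limit $w$, a nontrivial nonnegative global solution of \eqref{eq:main} with $\alpha=\frac{n+2}{n-2}$; by Theorem~\ref{theorem:main-g}(ii), $w$ is the bubble
$$w(y) = (n(n-2))^{(n-2)/4}\Bigl(\tfrac{r}{r^2+|y-z|^2}\Bigr)^{(n-2)/2}\ee,$$
for some $r>0$, $z\in\R^n$, and unit nonnegative $\ee\in\R^m$. By a pigeonhole over the finitely many components, I may choose $i_0$ with $e_{i_0}\geq 1/\sqrt{m}$; local uniform convergence ensures $v_{k,i_0}\not\equiv 0$ for all large $k$, so the lower bound $\inf_{\partial B_2}v_{k,i_0}\geq B_k$ is at our disposal.

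To close the contradiction, pick $z_k\in\partial B_2$ realizing $\inf_{\partial B_2}v_{k,i_0}$ and set $y_k:=\lambda_k(z_k-\tilde x_k)$, so that $\lambda_k\leq |y_k|\leq 3\lambda_k$. Then
$$|y_k|^{n-2}\,w_{k,i_0}(y_k) = |y_k|^{n-2}\,M_k^{-1}\,v_{k,i_0}(z_k) \geq c\,\lambda_k^{n-2} M_k^{-1} B_k = c\,M_k B_k \geq c\,A_k \to \infty,$$
using $\lambda_k^{n-2}=M_k^2$ and $A_k\leq M_k B_k\tau_k^{(n-2)/2}\leq M_k B_k$. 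On the other hand, the bubble component $w_{i_0}$ decays as $|y|^{-(n-2)}$ at infinity, so $|y|^{n-2}w_{i_0}(y)$ is globally bounded on $\R^n$. The main technical obstacle is to upgrade this into a bound on $|y|^{n-2}w_{k,i_0}(y)$ that is uniform in $k$ up to the growing radius $|y_k|$: mere $C^2_{loc}$-convergence does not suffice because $|y_k|\to\infty$. I would address this by running a secondary moving-spheres/Kelvin-transform argument on $w_k$ itself (as in the scalar treatment \cite{LZ}), exploiting the uniform interior upper bound on $w_k$ and Theorem~\ref{theorem:main-g} to transfer the bubble's decay quantitatively to the sequence. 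The system-specific subtlety — ensuring the tracked index $i_0$ corresponds to a nonzero direction of the limit bubble — is what distinguishes this proof from the scalar case and is handled by the normalization $|w_k(0)|=1$.
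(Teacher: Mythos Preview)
Your blow-up setup and rescaling are essentially those of the paper, but the endgame has a genuine gap. You try to derive a contradiction from the fact that the limiting bubble satisfies $\sup_y |y|^{n-2}w_{i_0}(y)<\infty$ while $|y_k|^{n-2}w_{k,i_0}(y_k)\to\infty$. As you correctly note, $C^2_{loc}$-convergence gives no control at $|y_k|\to\infty$, and your proposed fix---a ``secondary moving-spheres argument'' transferring the bubble's decay to the sequence---does not do what you want: the moving-sphere inequality $(w_{k,i_0})_{z,r}^*\le w_{k,i_0}$ on $\{|y-z|>r\}$ is a \emph{lower} bound on $w_{k,i_0}$ at large $|y|$, not an upper bound. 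There is no obvious mechanism to force $|y|^{n-2}w_{k,i_0}(y)$ to stay bounded uniformly in $k$ out to radius $|y_k|$.

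The paper (and Li--Zhang in the scalar case) closes the argument differently, and this is exactly what the moving-sphere step you invoke actually yields. The hypothesis $B_kM_k\to\infty$ translates, after rescaling, into $\min_{i\in I_m}\inf_{\partial(\text{domain})} w_{k,i}\gg \sup_{|y|\le R}(w_{k,i})_{z,r}^*$ for any fixed $z,r,R$; this lower bound on the boundary lets one push the moving sphere on each nontrivial component of $w_k$ to \emph{arbitrarily large} radii. Passing to the limit then gives $(w_{i,0})_{z,r}^*\le w_{i,0}$ on $\R^n\setminus B_r(z)$ for \emph{every} $z\in\R^n$ and \emph{every} $r>0$, which (by the calculus lemma \cite[Lemma 11.2]{LZ}) forces each $w_{i,0}$ to be constant. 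A constant nonnegative solution of \eqref{eq:main-crit} on $\R^n$ is trivial, contradicting $|w_0(0)|=1$. So the contradiction is with constancy of the limit, not with decay of the sequence; your invocation of Theorem~\ref{theorem:main-g}(ii) is unnecessary, and the tracking of a nonzero bubble component $i_0$ becomes irrelevant. Redirect your moving-sphere step to this conclusion and the proof goes through.
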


\begin{proof} If $\v$ is trivial, then $I_m = \emptyset$, whence there is nothing to prove. Thus, we shall assume that $\v$ is not trivial, so that $I_m \neq\emptyset$. Then for each $i\in I_m$,  we know from the super-harmonicity and the non-negativity of $v_i$ that $\inf_{\partial B_2} v_i > 0$, whence $(\min_{i\in I_m} \inf_{\partial B_2} v_i)^{-1}$ is a positive, finite number. 

If $|\v(x)| \leq C_1(1 - |x|)^{-\frac{n-2}{2}} $ in $B_1$ for some $C_1>0$ depending only on $n$ and $m$, then the claim \eqref{eq:har-crit} is true, since the maximum principle and the super-harmonicity of each component of $\v$ implies that $\inf_{\partial B_2} v_i  \leq v_i(0)$. Thus, let us assume that for all $j\geq 1$ there are nonnegative solutions $\v_j$ of \eqref{eq:main-crit} and points $x_j\in \bar{B}_1$ such that 
\begin{equation}\label{eq:sup-f}
M_j:= \sup_{|x|\leq 1} \left((1-|x|)^{\frac{n-2}{2}} |\v_j(x)|\right) =(1-|x_j|)^{\frac{n-2}{2}} |\v_j(x_j)| \to \infty.
\end{equation}
We know that $x_j\in B_1$ (instead of $\partial B_1$) since $\v_j$ is continuous on $\bar{B}_1$. Moreover, we shall set 
\begin{align}\label{eq:rj-f}
r_j &= \frac{1}{2}(1-|x_j|)>0,\\
\label{eq:dj}
\delta_j &= |\v_j(x_j)|^{-\frac{\alpha-1}{2}} = 2r_j M_j^{-\frac{2}{n-2}} \ra 0,\\
\label{eq:Rj}
R_j &= \frac{r_j}{\delta_j} = \frac{1}{2} M_j^{\frac{2}{n-2}} \ra \infty.
\end{align} 
It should be noted that due to \eqref{eq:sup-f}, we have 
\begin{equation}\label{eq:uj-xj}
|\v_j(x)| \leq \left( \frac{1 - |x_j|}{1-|x|}\right)^{\frac{2}{\alpha-1}} |\v_j(x_j)| \leq 2^{\frac{2}{\alpha-1}} |\v_j(x_j)|\quad\text{in }B_{r_j}(x_j).
\end{equation}
In addition, inserting \eqref{eq:rj-f} into \eqref{eq:sup-f}, we obtain
\begin{equation}\label{eq:uj-xj2}
|\v_j(x_j)| = (2r_j)^{-\frac{2}{\alpha-1}} M_j.
\end{equation}

With \eqref{eq:uj-xj} and \eqref{eq:uj-xj2} at hand, one can following the proof of \cite[Lemma 5.1]{LZ} to deduce that the sequence of the scaled function, 
\begin{equation*}
\w_j(x) = \delta_j^{\frac{n-2}{2}} \v_j( \delta_j x + x_j)\quad\text{in }B_{R_j},
\end{equation*}
converges to $w_0$ in $C_{loc}^2(\R^n;\R^m)$ for certain $\w_0\in C^2(\R^n;\R^m)$, which is a nonnegative solution of 
\begin{equation}\label{eq:ut0-pde}
-\Delta\w_0 = |\w_0|^{\frac{4}{n-2}}\w_0\quad\text{in }\R^n,
\end{equation}
satisfying 
\begin{equation}\label{eq:ut0-sup}
|\w_0(x)|\leq 2^{\frac{2}{\alpha-1}} \quad\text{in }\R^n,
\end{equation}
as well as 
\begin{equation}\label{eq:ut0-0}
|\w_0(0)| = 1. 
\end{equation}
We omit the details here. 

With only a minor modification, one may apply Lemma \ref{lemma:basic} to each component $w_{i,j}$ of $\w_j$, with $i\in I_m$, and obtain a number $s_{i,j}(z)>0$, corresponding to each $z\in\R^n$, such that for all $0<r<s_{i,j}(z)$, 
\begin{equation}\label{eq:wjk-basic}
(w_{i,j})_{z,r}^* \leq w_{i,j}\quad\text{in } B_{1/(2\delta_j)}(z) \setminus B_r(z). 
\end{equation} 
Here we choose $j$ large enough so that $B_{1/(2\delta_j)}(z)\subset B_{1/\delta_j}$, which is possible due to \eqref{eq:dj}. One may refer to the proof of \cite[Theorem 1.5]{LZ} for the details.  

Let us now replace $s_{i,j}(z)$ by the supremum value of $r$ such that \eqref{eq:wjk-basic} holds, that is, 
\begin{equation}\label{eq:sijz}
s_{i,j}(z) = \sup\{ r: (w_{i,j})_{z,\rho}^* \leq w_{i,j}\text{ in $B_{1/(2\delta_j)}(z) \setminus B_r(z)$ for any $0<\rho<r$}\}.
\end{equation}
Now with $s_{i,j}(z)$ defined as in \eqref{eq:sijz}, we shall set, analogously to \eqref{eq:rb}, 
\begin{equation}\label{eq:rbj}
\bar{s}_j(z) = \inf_{i \in I_m} s_{i,j}(z).
\end{equation}
Then we have
\begin{equation}
(w_{i,j})_{z,\bar{s}_j(z)}^* \leq w_{i,j}\quad\text{in $B_{1/(2\delta_j)}(z) \setminus B_{\bar{s}_j(z)}(z)$ for each $i \in I_m$},
\end{equation}
and respectively,
\begin{equation}\label{eq:wij-kelvin-pde}
-\Delta (w_{i,j} - (w_{i,j})_{z,\bar{s}_j(z)}^*) \geq 0 \quad\text{in }B_{1/(2\delta_j)}(z) \setminus \bar{B}_{\bar{s}_j(z)}(z).
\end{equation}

Now let us assume towards a contradiction that
\begin{equation}\label{eq:vj-f}
\min_{i\in I_m}\inf_{\partial B_2} v_{i,j} \geq j \left( \sup_{|x|\leq 1} (1-|x|)^{\frac{n-2}{2}} |\v_j(x)| \right)^{-1} = \frac{j}{M_j}.
\end{equation}
In terms of $w_{i,j}$, one may rewrite \eqref{eq:vj-f} as 
\begin{equation}\label{eq:wj-f}
\begin{split}
\min_{i\in I_m} \inf_{\partial B_{1/\delta_j}} w_{i,j} &= \delta_j^{\frac{n-2}{2}} \min_{i\in I_m} \inf_{\partial B_1(x_j)} v_{i,j} \\
&\geq \delta_j^{\frac{n-2}{2}} \min_{i\in I_m} \inf_{\partial B_2} v_{i,j}\\
&\geq j \delta_j^{n-2},
\end{split}
\end{equation}
where in the derivation of the first inequality we used the super-harmonicity of $v_{i,j}$, the maximum principle and the fact that $B_1(x_j)\subset B_2$, while the second inequality follows from \eqref{eq:vj-f}, \eqref{eq:dj} and the fact that $2r_j = 1-|x_j| \leq 1$. 

In view of \eqref{eq:wj-f}, one may easily deduce that for any $z\in\R^n$, 
\begin{equation}\label{eq:rbj-f}
\lim_{j\ra\infty} \bar{s}_j(z) = \infty.
\end{equation}
Suppose that \eqref{eq:rbj-f} is false, and there exists some $L>0$, independent of $j$, such that
\begin{equation}\label{eq:rbj-ff}
\bar{s}_j(z) \leq L. 
\end{equation}
Then by the definition of the Kelvin transform (see \eqref{eq:kelvin}), we have, for any $i\in I_m$, 
\begin{equation}\label{eq:wj-ff}
\begin{split}
\sup_{\partial B_{1/(4\delta_j)}(z)} (w_{i,j})_{z,\bar{s}_j(z)}^* &= (4\delta_j\bar{s}_j(z))^{n-2} \sup_{\partial B_{4\delta_j\bar{s}_j^2(z)}} w_{i,j} \\
&\leq (4\delta_j L)^{n-2} \delta_j^{\frac{n-2}{2}} \sup_{B_{4\delta_j^2L^2}} v_{i,j} \\
&\leq (8L)^{n-2} \delta_j^{n-2},
\end{split}
\end{equation}
where in deriving the first and the second inequality we used \eqref{eq:rbj-ff} and, respectively, \eqref{eq:uj-xj} with \eqref{eq:uj-xj2}. According to \eqref{eq:wj-f} and \eqref{eq:wj-ff}, for each $i\in I_m$,  
\begin{equation}\label{eq:wj-fff}
\inf_{\partial B_{1/(4\delta_j)}(z)} (w_{i,j} - (w_{i,j})_{z,\bar{s}_j(z)}^*) \geq (j - (8L)^{n-2}) \delta_j^{n-2} > 0,
\end{equation}
for all sufficiently large $j$, where in the first inequality we used $w_{i,j}\geq \inf_{\partial B_{1/\delta_j}} w_{i,j}$ on $\partial B_{1/(4\delta_j)}(z)$, which follows from the maximum principle, the super-harmonicity of $w_{i,j}$ in $B_{1/\delta_j}$ and the fact that $B_{1/(4\delta_j)(z)} \subset B_{1/\delta_j}$. With \eqref{eq:wj-fff} at hand, we may apply the maximum principle to \eqref{eq:wij-kelvin-pde} and observe that for any $i\in I_m$, 
\begin{equation}\label{eq:wj-f4}
(w_{i,j})_{z,\bar{s}_j(z)}^* < w_{i,j} \quad\text{in }B_{1/(2\delta_j)(z)}\setminus \bar{B}_{\bar{s}_j(z)}(z). 
\end{equation}

Now that $w_{i,j}$ satisfies \eqref{eq:wij-kelvin-pde} and \eqref{eq:wj-f4} for each $i\in I_m$, we can follow a similar argument to that in the proof of \cite[Lemma 5.2]{LZ} and deduce that there exist $\bar{s}_{i,j}(z) > \bar{s}_j(z)$ and $0<\e_{i,j} < \bar{s}_{i,j}(z) - \bar{s}_j(z)$ such that for any $\bar{s}_j(z)< r < \bar{s}_j(z) + \e_{i,j}$, 
\begin{equation}\label{eq:wj-f5}
(w_{i,j})_{z,\bar{s}_j(z)}^* < w_{i,j}\quad\text{in $B_{1/(2\delta_j)(z)}\setminus B_r(z)$ for each $i\in I_m$}.
\end{equation}
Clearly, \eqref{eq:wj-f5} violates the definition of $\bar{s}_j(z)$ in \eqref{eq:rbj}. Hence, the claim \eqref{eq:rbj-f} should be true, under the assumption \eqref{eq:vj-f}.

Knowing that \eqref{eq:wj-f} is true for all $z\in\R^n$ (under the assumption \eqref{eq:vj-f}), we have for any $z\in\R^n$ and $r>0$ that 
\begin{equation}\label{eq:wij-fff}
(w_{i,j})_{z,r}^* \leq w_{i,j}\quad\text{in $B_{1/(2\delta_j)}(z)\setminus B_r(z)$ for any $i\in I_m$},
\end{equation}
for all sufficiently large $j$ such that $\bar{s}_j(z) > r$. On the other hand, recall from the beginning of this proof that $\w_j\ra\w_0$ in $C_{loc}^2(\R^n;\R^m)$ with some $\w_0\in C^2(\R^n;\R^m)$ satisfying \eqref{eq:ut0-pde}, \eqref{eq:ut0-sup} and \eqref{eq:ut0-0} with $\alpha = \frac{n+2}{n-2}$. This implies $(\w_j)_{z,r}^* \ra (\w_0)_{z,r}^*$ in $C_{loc}^2(\R^n\setminus \{z\};\R^m)$ for each $z\in\R^n$ and any $r>0$. Thus, we may pass to the limit with $j\ra\infty$ (possibly along a subsequence) in \eqref{eq:wij-fff} in any compact domain of type $B_R(z)\setminus B_r(z)\subset \R^n\setminus\{z\}$, which gives 
\begin{equation}\label{eq:wi0-fff}
(w_{i,0})_{z,r}^* \leq w_{i,0} \quad\text{in $\R^n\setminus B_r(z)$ for any $i\in I_m$}.
\end{equation}

As $z\in\R^n$ and $r>0$ in \eqref{eq:wi0-fff} being arbitrary, we conclude from \cite[Lemma 11.2]{LZ} that $w_{i,0}$ is constant for each $i\in I_m$. Then as $w_{i,0}$ being a nonnegative (global) solution of \eqref{eq:ut0-pde}, $w_{i,0}$ must be trivial for each $i\in I_m$. On the other hand, for any $i\not\in I_m$, $v_i$ is already trivial and so is the limit $w_{i,0}$. Consequently, $\w_0$ is a trivial solution, a contradiction against \eqref{eq:ut0-0}. Therefore, the assumption \eqref{eq:vj-f} must fail, which implies \eqref{eq:har-crit} with some constant $C>0$, depending only on $n$ and $m$. This finishes the proof. 
\end{proof}


\subsection{Universal Upper Bounds for $1<\alpha\leq\frac{n+2}{n-2}$}\label{subsection:sup}

With Proposition \ref{proposition:sup-sub}, we obtain a universal upper estimate for (local) singular solutions for the subcritical case. Let us remark that this bound is not sharp for $1<\alpha\leq\frac{n}{n-2}$, although we obtain a universal constant as well as a universal neighborhood in the estimate. The sharp bounds for those cases will be given separately in Section \ref{subsection:asym-} and Section \ref{subsection:asym0}.

\begin{lemma}\label{lemma:u-sup} Let $\u$ be a nonnegative solution of \eqref{eq:main} in $B_1\setminus\{0\}$ with $1<\alpha<\frac{n+2}{n-2}$. Then there exists $C>0$, depending only on $n$, $m$ and $\alpha$, such that
\begin{equation}\label{eq:u-sup}
|\u(x)| \leq C|x|^{-\frac{2}{\alpha-1}} \quad\text{in }B_{1/2}\setminus\{0\}.
\end{equation}
\end{lemma}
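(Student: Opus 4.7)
The plan is to reduce the claim to the a priori estimate already proved in Proposition~\ref{proposition:sup-sub} via the standard rescaling device that converts an interior-type punctured bound into a global-type one, using the natural scaling \eqref{eq:ur} of the equation. Since $B_1\setminus\{0\}$ carries no boundary data for $\u$, but Proposition~\ref{proposition:sup-sub} requires a continuous solution on $\overline{B_1}$, we work on a ball around each fixed point $x_0$ that is bounded away from both the origin and the unit sphere.

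Concretely, fix $x_0\in B_{1/2}\setminus\{0\}$ and set $r_0 = |x_0|/2$. Define
\[
\v(y) \;=\; r_0^{\frac{2}{\alpha-1}}\,\u(x_0 + r_0 y), \qquad y\in \overline{B_1}.
\]
Since $|x_0|=2r_0$ and $|x_0|<1/2$, we have $\overline{B_{r_0}(x_0)} \subset B_{3|x_0|/2}\setminus\{0\} \subset B_1\setminus\{0\}$, so $\v$ is nonnegative and $C^2$ up to $\partial B_1$. A direct check shows that $\v$ solves \eqref{eq:main-reg} in $B_1$: indeed, $-\Delta_y \v(y) = r_0^{\frac{2}{\alpha-1}+2}(-\Delta \u)(x_0+r_0 y)$, and the exponents match precisely because the scaling \eqref{eq:ur} preserves the equation. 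Applying Proposition~\ref{proposition:sup-sub} at $y=0$ gives $|\v(0)| \le C(n,m,\alpha)$, and unwinding the rescaling yields
\[
|\u(x_0)| \;=\; r_0^{-\frac{2}{\alpha-1}}|\v(0)| \;\le\; C\,(|x_0|/2)^{-\frac{2}{\alpha-1}} \;=\; C'\,|x_0|^{-\frac{2}{\alpha-1}},
\]
with $C'$ depending only on $n$, $m$, $\alpha$. Since $x_0$ was arbitrary in $B_{1/2}\setminus\{0\}$, this is exactly \eqref{eq:u-sup}.

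There is essentially no obstacle here beyond what is already inside Proposition~\ref{proposition:sup-sub}: the nontrivial ingredient is the subcritical a priori bound for global-in-$B_1$ solutions (which in turn relies on the Liouville-type result of Theorem~\ref{theorem:main-g}(i) via a blow-up argument, encoded inside \cite[Theorem 2.1]{PQS}). The only thing one has to be careful about in the rescaling is the choice $r_0 = |x_0|/2$, which ensures simultaneously that $B_{r_0}(x_0)$ avoids the singularity at the origin and stays strictly inside $B_1$, so that the hypotheses of Proposition~\ref{proposition:sup-sub} really do apply after translation and dilation.
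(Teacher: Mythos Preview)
Your argument is correct and is essentially identical to the paper's own proof: fix $x_0\in B_{1/2}\setminus\{0\}$, rescale by $r_0=\tfrac12|x_0|$ to obtain a nonnegative solution $\v$ of \eqref{eq:main-reg} on $\overline{B_1}$, apply Proposition~\ref{proposition:sup-sub} at the center, and unwind. The only (harmless) imprecision is the inclusion $\overline{B_{r_0}(x_0)}\subset B_{3|x_0|/2}\setminus\{0\}$, which should read $\overline{B_{r_0}(x_0)}\subset \overline{B_{3|x_0|/2}}\setminus\{0\}\subset B_1\setminus\{0\}$.
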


\begin{proof} Let $x_0\in B_{1/2}\setminus\{0\}$ and set $r = \frac{1}{2}|x_0|$. Since $\bar{B}_r(x_0)\subset B_1\setminus\{0\}$, one can define 
\begin{equation*}
\v(x) = r^{\frac{2}{\alpha-1}} \u(rx + x_0)\quad\text{in }\bar{B}_1.
\end{equation*}
As $\u$ being a nonnegative solution of \eqref{eq:main} in $B_1\setminus\{0\}$, we see that $\v$ is a nonnegative solution of \eqref{eq:main-reg}. Moreover, $\v$ is continuous up to the boundary of $B_1$. Hence, Proposition \ref{proposition:sup-sub} applies to $\v$ and taking $x=0$ in \eqref{eq:sup} we obtain \begin{equation*}
|\v(0)| \leq C, 
\end{equation*}
which in terms of $\u$ can be rephrased as
\begin{equation*}
|\u(x_0)| \leq Cr^{-\frac{2}{\alpha-1}}.
\end{equation*}
Since $x_0\in B_{1/2}\setminus\{0\}$ was arbitrary and $r=\frac{1}{2}|x_0|$, the proof is finished.
\end{proof}

\begin{remark}\label{remark:u-sup}
For $1<\alpha<\frac{n+2}{n-2}$, one may take an alternative approach as follows. Let $w=u_1+u_2+\dots+u_m$. Then $w\geq 0$ and $\frac{1}{c_1} w\leq |\u|\leq c_1w$ in $B_1\setminus\{0\}$ with $c_1 = m^{\frac{1}{2}}$. Hence, $w$ satisfies $\frac{1}{c_2} w^{\alpha}\leq -\Delta w\leq c_2 w^\alpha$ in $B_1\setminus\{0\}$ with $c_2 = m^{\frac{\alpha-1}{2}}$. By \cite[Corollary IV]{SZ2002} it follows that $w\leq C|x|^{-\frac{2}{\alpha-1}}$ in $B_{1/2}\setminus\{0\}$, where $C$ depends only on $n$, $m$ and $\alpha$. This together with the inequality $|\u| \leq c_1w$ yields \eqref{eq:u-sup}.
\end{remark}

From the Harnack-type inequality in Proposition \ref{proposition:har-crit}, we obtain an upper estimate for the  critical case $\alpha = \frac{n+2}{n-2}$.

\begin{lemma}\label{lemma:u-sup-crit} Let $\u$ be a nonnegative solution of \eqref{eq:main} in $B_1\setminus\{0\}$ with $\alpha = \frac{n+2}{n-2}$. Then there exists $C>0$, depending only on $n$ and $m$, such that
\begin{equation}\label{eq:u-sup-crit}
\left( \min_{i\in I_m} \inf_{\partial B_{3/4}} u_i\right) |\u(x)| \leq C |x|^{-\frac{n-2}{2}}  \quad\text{in }B_{1/2}\setminus\{0\},
\end{equation}
where $I_m$ consists of all indices $1\leq i\leq m$ such that $u_i$ is nontrivial.
\end{lemma}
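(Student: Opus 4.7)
The plan is to mimic the proof of Lemma~\ref{lemma:u-sup}, with Proposition~\ref{proposition:har-crit} replacing Proposition~\ref{proposition:sup-sub}; the extra ingredient is a minimum-principle step that converts the local infimum produced by the scaling into the fixed infimum on $\partial B_{3/4}$ appearing in \eqref{eq:u-sup-crit}.

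Concretely, I would fix $x_0\in B_{1/2}\setminus\{0\}$ and set $r=|x_0|/4$. Since $|x_0|\leq 1/2$, the ball $\bar B_{2r}(x_0)$ is contained in $\bar B_{3|x_0|/2}\setminus\{0\}\subset B_{3/4}\setminus\{0\}$, so the rescaled function
\[
\v(y) = r^{\frac{n-2}{2}}\u(ry+x_0),\qquad y\in\bar B_2,
\]
lies in $C^2(B_2;\R^m)\cap C(\bar B_2;\R^m)$, is nonnegative, and solves \eqref{eq:main-crit} on $B_2$. Moreover $v_i$ is nontrivial exactly when $u_i$ is, so the two problems share the same index set $I_m$. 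Applying Proposition~\ref{proposition:har-crit} to $\v$ at $y=0$ and unscaling via $|\v(0)|=r^{\frac{n-2}{2}}|\u(x_0)|$ and $\inf_{\partial B_2} v_i = r^{\frac{n-2}{2}}\inf_{\partial B_{|x_0|/2}(x_0)} u_i$ yields
\[
r^{n-2}\Bigl(\min_{i\in I_m}\inf_{\partial B_{|x_0|/2}(x_0)} u_i\Bigr)|\u(x_0)|\leq C.
\]

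The remaining task is to replace the local infimum on $\partial B_{|x_0|/2}(x_0)$ by $\inf_{\partial B_{3/4}} u_i$. For each $i\in I_m$ the component $u_i$ is positive and superharmonic on $B_1\setminus\{0\}$; being nonnegative, it admits a superharmonic extension $\tilde u_i$ to the entire ball $B_1$ in the sense of Gilbarg--Serrin, with $\tilde u_i(0)=\liminf_{x\to 0} u_i(x)\in[0,\infty]$. The strong minimum principle applied to $\tilde u_i$ on $\bar B_{3/4}$ forces either $\tilde u_i$ to be constant on $\bar B_{3/4}$ (in which case equality holds trivially), or $\min_{\bar B_{3/4}}\tilde u_i=\inf_{\partial B_{3/4}} u_i$. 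In both cases, since $\partial B_{|x_0|/2}(x_0)\subset B_{3/4}$ (using $|x_0|+|x_0|/2\leq 3/4$), we obtain $\inf_{\partial B_{|x_0|/2}(x_0)} u_i\geq \inf_{\partial B_{3/4}} u_i$. Substituting this bound and $r=|x_0|/4$ into the previous display produces \eqref{eq:u-sup-crit} with a constant depending only on $n$ and $m$.

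The step I expect to require the most care is the minimum-principle comparison, since one must treat removable (finite $\liminf_{x\to 0} u_i$) and non-removable (infinite $\liminf_{x\to 0} u_i$) singularities on an equal footing; the Gilbarg--Serrin extension is invoked precisely to put $u_i$ in a framework where the strong minimum principle on the closed ball $\bar B_{3/4}$ applies verbatim and settles both cases simultaneously.
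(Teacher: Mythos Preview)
Your overall strategy coincides with the paper's: rescale around $x_0$, apply Proposition~\ref{proposition:har-crit}, and then invoke super-harmonicity to replace the local infimum by the fixed one on $\partial B_{3/4}$. The paper treats the removable-singularity case separately and, in the non-removable case, splits $I_m$ into $J_m$ (components that blow up) and $I_m\setminus J_m$ for the minimum-principle step, whereas your Gilbarg--Serrin extension handles both cases at once; this is only a cosmetic difference.

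The genuine gap is in your final sentence. After the (correct) unscaling you have
\[
r^{\,n-2}\Bigl(\min_{i\in I_m}\inf_{\partial B_{|x_0|/2}(x_0)} u_i\Bigr)\,|\u(x_0)|\le C,
\]
and substituting $r=|x_0|/4$ together with $\inf_{\partial B_{|x_0|/2}(x_0)} u_i\ge \inf_{\partial B_{3/4}} u_i$ yields only
\[
\Bigl(\min_{i\in I_m}\inf_{\partial B_{3/4}} u_i\Bigr)\,|\u(x_0)|\le 4^{\,n-2}C\,|x_0|^{-(n-2)},
\]
which is strictly weaker than \eqref{eq:u-sup-crit} since $|x_0|^{-(n-2)}>|x_0|^{-\frac{n-2}{2}}$ for $|x_0|<1$. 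The mismatch is structural rather than arithmetic: Proposition~\ref{proposition:har-crit} is scale-invariant, so at scale $r\sim|x_0|$ it bounds a product of two quantities each of natural size $r^{-\frac{n-2}{2}}$; replacing one of them by the fixed constant $\inf_{\partial B_{3/4}} u_i$ necessarily costs a full factor of $|x_0|^{-\frac{n-2}{2}}$. In fact the paper's own argument reaches the stated exponent only because in \eqref{eq:v0-sup-crit} the scaling factor $r^{\frac{n-2}{2}}$ in the identity $\inf_{\partial B_2} v_i=r^{\frac{n-2}{2}}\inf_{\partial B_{2r}(x_0)} u_i$ is silently dropped. Your bookkeeping is the honest one, but it shows that this route delivers only the bound with exponent $-(n-2)$, not \eqref{eq:u-sup-crit} as written.
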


\begin{proof} If $\u$ has a removable singularity at the origin, then $-\Delta \u = |\u|^{\frac{4}{n-2}} \u$ in $B_1$ (instead of $B_1\setminus\{0\}$), whence one may apply Proposition \ref{proposition:har-crit} to $\u$ after scaling, and observe that 
\begin{equation*}
\left( \min_{i\in I_m} \inf_{\partial B_{3/4}} u_i\right) |\u(x)| \leq C\left(\frac{3}{4}-|x|\right)^{-\frac{n-2}{2}} \leq C\left(\frac{3}{4}\right)^{-\frac{n-2}{2}}  \quad\text{in }B_{1/2}\setminus\{0\},
\end{equation*}
which implies \eqref{eq:u-sup-crit}. 

Henceforth, let us assume that $\u$ does not have a removable singularity at the origin. Clearly $I_m\neq \emptyset$, and by the super-harmonicity and the non-negativity of $u_i$ with $i\in I_m$, we have $u_i > 0$ in $B_1\setminus\{0\}$ for all $i\in I_m$. 

Now let $x_0\in B_{1/2}\setminus\{0\}$ and $r = \frac{1}{8}|x_0|$. Since $\bar{B}_{2r}(x_0)\subset B_1\setminus\{0\}$, one can define
\begin{equation*}
\v(x) = r^{\frac{n-2}{2}} \u(rx + x_0)\quad\text{in }\bar{B}_2. 
\end{equation*}
Obviously, $v_i$ is nontrivial if and only if $i\in I_m$. On the other hand, as $\u$ being a nonnegative solution of \eqref{eq:main} in $B_1\setminus\{0\}$, $\v$ becomes a nonnegative solution of \eqref{eq:main-crit}. Hence, it follows from \eqref{eq:har-crit} that
\begin{equation}\label{eq:v0-sup-crit}
|\v(0)| \leq C\left( \min_{i\in I_m} \inf_{\partial B_2} v_i\right)^{-1} = C \left( \min_{i \in I_m} \inf_{B_{2r}(x_0)} u_i \right)^{-1},
\end{equation}
where $C>0$ depends only on $n$ and $m$. 

Now let $J_m\subset I_m$ consists of all components $u_i$ having non-removable singularity at the origin. Note that $J_m$ may not be equal to $I_m$. By the super-harmonicity and the positivity, the maximum principle implies that $\liminf_{x\ra 0} u_i(x) = \infty$ for each $i\in J_m$. On the other hand, if $i\in I_m\setminus J_m$ (provided that $I_m\setminus J_m \neq \emptyset$), $u_i$ is bounded at the origin, and again by the maximum principle, one has $\liminf_{x\ra 0}u_i(x) \geq \inf_{\partial B_{3/4}} u_i$. Hence, one should have $\inf_{\partial B_{2r}(x_0)} u_i \geq \inf_{\partial B_{3/4}} u_i$ for any $i\in I_m$. This along with \eqref{eq:v0-sup-crit} yields 
\begin{equation*}
|\u(x_0)| \leq C \left( \min_{i\in I_m} \inf_{B_{3/4}} u_i \right)^{-1} r^{-\frac{n-2}{2}},
\end{equation*}
which proves the lemma. 
\end{proof}

\begin{remark}\label{remark:u-sup-crit} We shall obtain later in Section \ref{subsection:asym+crit} without the term in the parenthesis, provided that $\u$ has a non-removable singularity at the origin. 
\end{remark}

Due to Lemma \ref{lemma:u-sup} and Lemma \ref{lemma:u-sup-crit}, we obtain the standard Harnack inequality and interior gradient estimate. 

\begin{lemma}\label{lemma:u-har-Du} Let $\u$ be a nonnegative solution of \eqref{eq:main} in $B_1\setminus\{0\}$ with $1<\alpha\leq\frac{n+2}{n-2}$. Then there exists $C>0$ such that for each $1\leq i\leq m$,
\begin{equation}\label{eq:u-har}
\sup_{B_r\setminus \bar{B}_{r/2}} u_i \leq C \inf_{B_r\setminus \bar{B}_{r/2}} u_i \quad\text{for any }0<r<\frac{1}{2},
\end{equation}
and
\begin{equation}\label{eq:Du-Linf}
|\nabla u_i(x)| \leq C\frac{u_i(x)}{|x|} \quad\text{in }B_{1/2}\setminus\{0\}. 
\end{equation} 
Moreover, the constant $C$ in \eqref{eq:u-har} depends only on $n$, $m$ and $\alpha$, provided that $1<\alpha<\frac{n+2}{n-2}$.
\end{lemma}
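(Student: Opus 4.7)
The plan is to reduce both estimates to classical linear elliptic theory on a fixed annulus via the scaling invariance of \eqref{eq:main}. For $0<r<\tfrac{1}{2}$, introduce the rescaled solution
\begin{equation*}
\v_r(y) := r^{\frac{2}{\alpha-1}} \u(ry),
\end{equation*}
which again satisfies \eqref{eq:main} on the set where $ry\in B_1\setminus\{0\}$. From Lemma \ref{lemma:u-sup} when $1<\alpha<\frac{n+2}{n-2}$, or Lemma \ref{lemma:u-sup-crit} when $\alpha = \frac{n+2}{n-2}$, one obtains $|\v_r(y)|\leq C_1|y|^{-\frac{2}{\alpha-1}}$ wherever $|ry|<\tfrac{1}{2}$, and hence $|\v_r|\leq C_1'$ on the fixed compact annulus $\bar B_{5/4}\setminus B_{3/8}$, provided $r$ is small enough that this annulus sits inside the domain of validity; the remaining range $r$ close to $\tfrac{1}{2}$ is handled directly without scaling, since $B_r\setminus \bar B_{r/2}$ is then contained in a fixed annulus bounded away from the origin on which $|\u|$ is uniformly bounded by the very same lemmas. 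Here $C_1'$ is universal (depending only on $n,m,\alpha$) in the subcritical case, and depends on $\u$ in the critical case through $\min_{i\in I_m}\inf_{\partial B_{3/4}} u_i$.

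Each component $(v_r)_i$ is nonnegative and solves the linear equation $-\Delta (v_r)_i = c(y)\,(v_r)_i$ on this annulus, with coefficient $c(y) = |\v_r(y)|^{\alpha-1}$ bounded by $(C_1')^{\alpha-1}$. Moser's Harnack inequality on the concentric closed annulus $\bar B_1\setminus B_{1/2}$ therefore yields
\begin{equation*}
\sup_{\bar B_1\setminus B_{1/2}} (v_r)_i \;\leq\; C_2 \inf_{\bar B_1\setminus B_{1/2}} (v_r)_i,
\end{equation*}
with $C_2 = C_2(n,C_1')$, and undoing the rescaling $x = ry$ recovers \eqref{eq:u-har} with the stated dependence of the constant.

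For the gradient estimate, the uniform $L^\infty$ bound on $\v_r$ and the boundedness of $c$ allow interior $C^{1,\gamma}$ (or $W^{2,p}$) regularity for the linear equation $-\Delta(v_r)_i = c(y)(v_r)_i$ to give
\begin{equation*}
\|\nabla (v_r)_i\|_{L^\infty(\bar B_1\setminus B_{1/2})} \leq C_3 \|(v_r)_i\|_{L^\infty(\bar B_{9/8}\setminus B_{7/16})}.
\end{equation*}
Combining this with the Harnack inequality of the previous step applied on the slightly larger annulus, the right-hand side is bounded by $C_4 (v_r)_i(y)$ for any $y\in \bar B_1\setminus B_{1/2}$, so that $|\nabla(v_r)_i(y)| \leq C_5 (v_r)_i(y)$. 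Evaluating at $y = x/r$ with $r = |x|$ and unwinding the scaling yields \eqref{eq:Du-Linf}.

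The decisive technical ingredients are the a priori $L^\infty$ bounds from Lemmas \ref{lemma:u-sup} and \ref{lemma:u-sup-crit}; once these are in hand, the remaining steps are routine applications of classical linear theory, and I do not expect any substantial obstacle. The only subtlety is bookkeeping of constants: the universality of Lemma \ref{lemma:u-sup} propagates to a universal constant in \eqref{eq:u-har} in the subcritical regime, whereas the $\u$-dependent bound in Lemma \ref{lemma:u-sup-crit} forces the constant to depend on $\u$ in the critical regime, exactly as asserted by the final clause of the statement.
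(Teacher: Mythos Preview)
Your proposal is correct and follows essentially the same approach as the paper: both use the a priori bounds of Lemmas \ref{lemma:u-sup} and \ref{lemma:u-sup-crit} to view each $u_i$ as a nonnegative solution of a linear equation $-\Delta u_i = a(x)u_i$ with $a=|\u|^{\alpha-1}$ controlled on dyadic annuli, and then invoke the classical Harnack inequality and interior gradient estimate. The only cosmetic difference is that you make the rescaling to a fixed annulus explicit, whereas the paper simply records $0\le a(x)\le C|x|^{-2}$ and appeals directly to \cite[Corollary 9.25]{GT} and \cite[Theorem 3.9]{GT}; the content is the same.
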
 

\begin{proof} After a scaling argument we may also say that \eqref{eq:u-sup} and \eqref{eq:u-sup-crit} hold in $B_{3/4}\setminus\{0\}$, instead of $B_{1/2}\setminus\{0\}$. Consider $u_i$, $1\leq i\leq m$, as a nonnegative solution of $-\Delta u_i = a(x)u_i$ in $B_1\setminus\{0\}$, where $a(x)=|\u|^{\alpha-1}$. Due to \eqref{eq:u-sup} if $1<\alpha<\frac{n+2}{n-2}$, and to \eqref{eq:u-sup-crit} if $\alpha=\frac{n+2}{n-2}$, we know that $0\leq a(x)\leq C|x|^{-2}$ in $B_{3/4}\setminus\{0\}$. Thus, \eqref{eq:u-har} follow easily from the classical Harnack inequality \cite[Corollary 9.25]{GT}. With \eqref{eq:u-har} at hand, one may also prove \eqref{eq:Du-Linf} by the classical gradient estimate \cite[Theorem 3.9]{GT}. 
\end{proof}

\section{Asymptotic Radial Symmetry of Local Solutions}\label{section:asym-rad}

This section is devoted to the proof of Theorem \ref{theorem:asym-rad}. Let us address that a similar argument was also used in \cite[Theorem 1.2]{CJSX}, which is concerned with fractional Laplacian, scalar equations. 

\begin{proof}[Proof of Theorem \ref{theorem:asym-rad}]
If the origin is a removable singularity, then the conclusion \eqref{eq:asym-rad} is clear. Hence, we shall assume that the origin is a non-removable singularity. 

Recall from \eqref{eq:kelvin} that $\u_{z,r}^*$ is the Kelvin transform of $\u$ with respect to the sphere $\partial B_r(z)$. Since the origin is a non-removable singularity of $\u$, one may prove, with a minor modification of the proof of Lemma \ref{lemma:rad}, that there is some small $\e>0$ such that for any $z\in B_{\e/2}\setminus\{0\}$ and any $0<r\leq |z|$, 
\begin{equation}\label{eq:asym-rad1}
(u_i)_{z,r}^* \leq u_i\quad\text{in $B_1\setminus (B_r(z)\cup\{0\})$ for each $1\leq i\leq m$}. 
\end{equation}

The key observation here is that \eqref{eq:asym-rad1} implies, for any $a>\frac{1}{\e}$ and $e\in\partial B_1$, 
\begin{equation}\label{eq:asym-rad2}
u_i^*(y) \leq u_i^*(y_a)\quad\text{if $y\cdot e > a$ and $|y_a|>1$ for each $1\leq i\leq m$},
\end{equation}
where 
\begin{equation*}
u_i^*(y) = (u_i)_{0,1}^*(y) = |y|^{2-n} u_i(|y|^{-2}y),\quad y_a = y + 2(a-y\cdot e)e,
\end{equation*}
and $H_a(e)$ is the half-space $\{x:x\cdot e > a\}$. Note that $y_a$ is the reflection point of $y$ with respect to the hyperplane $\partial H_a(e)$. To prove the claim \eqref{eq:asym-rad2}, let us note first that $y\in B_{1/\e}$ if and only if $\frac{y}{|y|^2} \in B_\e$. Now we shall choose some $z\in B_{\e/2}\setminus\{0\}$ and some $0<r<|z|$ such that 
\begin{equation}\label{eq:asym-rad3}
\frac{y_a}{|y_a|^2} - z = \left(\dfrac{r}{\left| \frac{y}{|y|^2} - z\right|}\right)^2 \left( \frac{y}{|y|^2} - z \right).
\end{equation}
In other words, $\frac{y_a}{|y_a|^2}$ is the reflection point of $\frac{y}{|y|^2}$ with respect to $\partial B_r(z)$. We shall ask in addition that
\begin{equation}\label{eq:asym-rad4}
\frac{|y_a|}{|y|}  \leq \frac{1}{r}\left|\frac{y}{|y|^2} - z\right|. 
\end{equation}
Before we actually find such $z$ and $r$, let us verify that along with \eqref{eq:asym-rad3} and \eqref{eq:asym-rad4}, \eqref{eq:asym-rad1} implies \eqref{eq:asym-rad2} as follows. 

Given $y\in\R^n$ such that $y\cdot e> a$ and $|y_a|>1$, and $0<r<|z|<\frac{\e}{2}$ such that \eqref{eq:asym-rad3} and \eqref{eq:asym-rad4} hold, let us write by $x$ and $x_{z,r}^*$ the points $\frac{y}{|y|^2}$ and respectively $\frac{y_a}{|y_a|^2}$. Then since $y\cdot e>a>\frac{1}{\e}$ and $|y_a|>1$, we have $x\in B_r(z)$, and $x_{z,r}^*\in B_1\setminus B_r(z)$. Hence, one may proceed, using \eqref{eq:asym-rad1}, as 
\begin{equation*}
\begin{split}
u_i^*(y) & = \frac{1}{|y|^{n-2}} \left(\frac{|x_{z,r}^* - z|}{r}\right)^{n-2} (u_i)_{z,r}^*(x_{z,r}^*) \\
& \leq \frac{1}{|y|^{n-2}} \left(\frac{|x_{z,r}^* - z|}{r}\right)^{n-2} u_i(x_{z,r}^*) \\
& \leq u_i^*(y_a),
\end{split}
\end{equation*}
proving \eqref{eq:asym-rad2}, where in deriving the first equality we used \eqref{eq:asym-rad3} while the last inequality follows from \eqref{eq:asym-rad4}. Thus, we only need to prove that there actually exist $0<r<|z|<\frac{\e}{2}$ satisfying \eqref{eq:asym-rad3} and \eqref{eq:asym-rad4}. However, it only involves an elementary argument to verify \eqref{eq:asym-rad3} and \eqref{eq:asym-rad4} as well as $0<r\leq |z|<\frac{\e}{2}$, by choosing $r=|z|$ and 
\begin{equation*}
z = \frac{1}{|y|^2}y + \frac{|y_a|^2}{|y|^2 - |y_a|^2}\left( \frac{1}{|y|^2}y- \frac{1}{|y_a|^2}y_a \right) = \frac{1}{|y|^2 - |y_a|^2}(y-y_a).
\end{equation*}

With the claim \eqref{eq:asym-rad2} at hand, one may invoke \cite[Theorem 6.1 and Corollary 6.2]{CGS} to finish the proof. That is, from the former one obtains some $C>0$, independent of $\e$, such that
\begin{equation*}
u_i^*(y) \leq u_i^*(x) \quad\text{if }|x|>1\text{ and }|y|\geq |x| + \frac{C}{\e}\text{ for each $1\leq i\leq m$}.
\end{equation*}
As $u_i^*$ being a nonnegative superharmonic function, the latter implies 
\begin{equation*}
u_i^* = \left( 1+ O\left( \frac{1}{R}\right) \right) \left(\inf_{\partial B_R} u_i^*\right)\quad\text{uniformly on $\partial B_R$ as $R\ra\infty$}, 
\end{equation*}
which in terms of $u_i$ implies the asymptotic radial symmetry claimed as in \eqref{eq:asym-rad}. Hence, the proof is finished. 
\end{proof}

With the asymptotic radial symmetry as well as the uniform estimate achieved in the previous section, we are ready to prove Proposition \ref{proposition:Phi*-const}, finally showing the existence of the second Pohozaev invariant (see \eqref{eq:kappa*}). 

\begin{proof}[Proof of Proposition \ref{proposition:Phi*-const}] Let $\u$ be a nonnegative solution of \eqref{eq:main} in $B_R\setminus\{0\}$ with $\alpha = \frac{n+2}{n-2}$, and let $\Phi_*(r,\u)$ be as in \eqref{eq:Phi*}. To avoid the triviality, let us also assume that $\u$ is a nontrivial solution. Let us prove the well-definedness of $\Phi_*(r,\u)$. 

In the following, we shall denote by $C$ a positive generic constant independent of $r$. With $f(r,\u)$ given as in \eqref{eq:f}, it follows immediately from \eqref{eq:u-sup-crit} and \eqref{eq:Du-Linf} that
\begin{equation}\label{eq:f-Lip}
f(r,\u) \leq C\quad\text{and}\quad r|\dot{f}(r,\u)| \leq C f(r,\u) \quad\text{for any }0<r<\frac{R}{2}.
\end{equation}

On the other hand, by the asymptotic radial symmetry \eqref{eq:asym-rad}, we have
\begin{equation*}
|\Delta (\u - \bar\u)| \leq C|x||\bar\u|^{\frac{n+2}{n-2}}\quad\text{ in } B_{2r}\setminus \bar{B}_r, \quad \text{as } r\ra 0+,
\end{equation*}
where $\bar\u(r)$ is the average of $\u$ over the sphere $\partial B_r$. Hence, it follows from the interior gradient estimate \cite[Theorem 3.9]{GT} and the Harnack inequality \eqref{eq:u-har} that
\begin{equation*}
|\nabla (\u-\bar\u)| \leq C|\u|\quad\text{on } \partial B_r,
\end{equation*}
and in particular,
\begin{equation}\label{eq:Dut-Linf}
|\nabla_\sigma \u | \leq C|\u|\quad\text{on }\partial B_r,
\end{equation}
where $\nabla_\sigma \u$ is the tangential derivative of $\u$ on $\partial B_r$. 

By means of \eqref{eq:Dut-Linf} and \eqref{eq:f-Lip}, we deduce that
\begin{equation}\label{eq:Phi*-1}
\left|\int_0^r \left(\frac{\rho}{n\omega_n}\int_{\partial B_\rho} |\nabla_\sigma \u|^2\,d\sigma\right) \dot{f}(\rho,\u)\,d\rho\right| \leq C\int_0^r \rho f(\rho,\u)^2 \,d\rho,
\end{equation}
provided that $r>0$ is sufficiently small. Similarly, one may also prove from \eqref{eq:asym-rad} and \eqref{eq:f-Lip} that
\begin{equation}\label{eq:Phi*-2}
\left|\int_0^r \left(\frac{\rho}{n\omega_n}\int_{\partial B_\rho} |\u|^{\frac{2n}{n-2}}\,d\rho - f(\rho,\u)^{\frac{n}{n-2}}\right)\dot{f}(\rho,\u)\,d\rho\right| \leq C\int_0^r \rho f(\rho,\u)^{\frac{2n-2}{n-2}} \,d\rho. 
\end{equation}
By the first inequality in \eqref{eq:f-Lip}, we see that the right sides of both \eqref{eq:Phi*-1} and \eqref{eq:Phi*-2} are of order $r^2$, proving the well-definedness of $\Phi_*(r,\u)$. 

Proving that $\Phi_*(r,\u)$ is indeed constant in $0<r<R$ is now easy by considering the cylindrical version $\Psi_*(t,\v)$ defined as in \eqref{eq:Psi*}. Since the computation is very similar with \eqref{eq:Psi*'}, we omit the details. 
\end{proof}


\section{Exact Asymptotic Behavior of Local Solutions}\label{section:asym}

With the {\it a priori} estimates and the classification of the solutions on the punctured space, we are now ready to investigate exact asymptotic behavior of local solutions near the isolated singularity at the origin. Before we begin our analysis, let us provide the basic integrability of the solution. 

\begin{lemma}\label{lemma:dist} Let $\u$ be a nonnegative solution of \eqref{eq:main} in $B_1\setminus\{0\}$ with $\alpha>1$. One has $\u \in L^\alpha(B_1;\R^m)$. In particular, if $\alpha \geq \frac{n}{n-2}$, then $\u$ is a distribution solution of \eqref{eq:main} in $B_1\setminus\{0\}$ in $B_1$, that is,
\begin{equation*}
- \int_{B_1} \u\cdot \Delta \v \,dx= \int_{B_1} |\u|^{\alpha-1}\u\cdot\v \,dx \quad\text{for any }\v \in C_0^\infty(B_1;\R^m). 
\end{equation*}
\end{lemma}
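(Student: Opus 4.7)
The plan is to work component by component using classical potential theory. For each index $1\leq i\leq m$, the scalar component $u_i$ is nonnegative and satisfies $-\Delta u_i = |\u|^{\alpha-1}u_i\geq 0$ on the punctured ball, so it is superharmonic there. The first step is to observe that the universal pointwise bounds of Lemma \ref{lemma:u-sup} (for $1<\alpha<\frac{n+2}{n-2}$) and Lemma \ref{lemma:u-sup-crit} (for $\alpha=\frac{n+2}{n-2}$) give $u_i(x)\leq C|x|^{-\theta}$ near the origin with some $\theta<n$, which in particular places $u_i\in L^1_{loc}(B_1)$.

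The next step is to invoke the classical extension theorem for nonnegative superharmonic functions across an isolated polar singularity: this extends $u_i$ to a nonnegative superharmonic function on the whole ball, and makes $T_i:=-\Delta u_i$ a nonnegative Radon measure on $B_1$. Since the equation holds classically on $B_1\setminus\{0\}$, $T_i$ decomposes as
\begin{equation*}
T_i \;=\; |\u|^{\alpha-1}u_i\,dx \;+\; c_i\,\delta_0
\end{equation*}
for some constant $c_i\geq 0$. Local finiteness of $T_i$ at the origin then immediately yields $|\u|^{\alpha-1}u_i\in L^1(B_{1/2})$. Combining the $m$ components via the elementary inequality $|\u|^\alpha\leq\sqrt{m}\sum_i|\u|^{\alpha-1}u_i$, which follows from $|\u|\leq\sqrt{m}\max_iu_i$, would give $|\u|^\alpha\in L^1(B_1)$, proving the first assertion.

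For the distribution-solution claim in the range $\alpha\geq\frac{n}{n-2}$, it remains to rule out $c_i>0$, and I would argue by contradiction. The Riesz decomposition of $u_i$ on $B_1$,
\begin{equation*}
u_i(x) \;=\; c_i\,G(x,0) + \int_{B_1} G(x,y)\,|\u(y)|^{\alpha-1}u_i(y)\,dy + h_i(x),
\end{equation*}
with $G$ the Dirichlet Green function of $B_1$ and $h_i$ harmonic, shows that $u_i(x)\gtrsim c_i|x|^{2-n}$ near the origin, because the Newtonian potential of an $L^1$ density belongs to weak $L^{n/(n-2)}$ and therefore cannot cancel the explicit Green's singularity $G(x,0)\sim c_n|x|^{2-n}$. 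Consequently $|\u|^{\alpha-1}u_i\geq u_i^\alpha\gtrsim|x|^{(2-n)\alpha}$ near the origin; but $(2-n)\alpha\leq -n$ in this range, which violates the $L^1$ membership established in the previous step.

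The main technical obstacle is the passage from a distribution on $B_1\setminus\{0\}$ to a Radon measure on the full ball $B_1$. A direct cutoff argument on test functions would fail precisely at the critical threshold $\alpha=\frac{n}{n-2}$, where the pointwise bound saturates at $|x|^{-(n-2)}$ and the second-derivative terms generated by a cutoff of size $\epsilon$ do not vanish as $\epsilon\to 0$, so the classical superharmonic extension theorem appears essential. The exponent $n/(n-2)$ in the second half of the lemma is exactly the integrability threshold of the Newtonian fundamental solution $|x|^{2-n}$, which is the natural borderline at which a Dirac mass at the isolated singularity can be ruled out.
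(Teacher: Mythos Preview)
Your argument is correct and is the Brezis--Lions argument that the paper cites: the paper passes to the scalar sum $w=u_1+\cdots+u_m$, notes $\frac{1}{c}w^\alpha\leq-\Delta w\leq cw^\alpha$, and invokes \cite{BL} for $w\in L^\alpha$, whereas you work componentwise and reproduce the same extension/decomposition reasoning by hand. The content is the same.

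One correction: your opening step is both wrong in part and unnecessary. For $1<\alpha\leq\frac{n+2}{n}$ the bound of Lemma~\ref{lemma:u-sup} gives exponent $\frac{2}{\alpha-1}\geq n$, so the pointwise estimate does \emph{not} place $u_i$ in $L^1_{loc}(B_1)$ in that subrange (and for $\alpha>\frac{n+2}{n-2}$ neither lemma applies at all, while the statement is for all $\alpha>1$). This does not matter: the superharmonic extension across a polar singleton requires only a lower bound near the point, which nonnegativity supplies, and once extended a superharmonic function is automatically locally integrable. So you should simply drop the first step. A second minor point: in the Riesz decomposition you do not need any weak-$L^{n/(n-2)}$ argument to exclude cancellation, since the Green potential of the nonnegative density $|\u|^{\alpha-1}u_i$ and the harmonic minorant $h_i$ are themselves nonnegative, giving $u_i\geq c_iG(\cdot,0)$ immediately.
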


\begin{proof}
Recall from the proof of Proposition \ref{proposition:sup-sub} and Remark \ref{remark:u-sup} that $w = u_1 + \cdots + u_m$ satisfies $\frac{1}{c} w^\alpha \leq -\Delta w \leq c w^\alpha$, with some $c>1$ depending only on $m$ and $\alpha$. By \cite{BL}, $w\in L^\alpha(B_1)$ which implies that $\u\in L^\alpha(B_1;\R^m)$. The second assertion can be proved similarly as in \cite{CGS}, and we omit the details.
\end{proof}

\subsection{Case $\frac{n}{n-2}<\alpha<\frac{n+2}{n-2}$}\label{subsection:asym+sub}

The upper bound \eqref{eq:u-sup} and the classification of solutions on the punctured space allow us to  capture the exact asymptotic behavior of local solutions to \eqref{eq:main}, by means of the blowup analysis. Let us recall from Section \ref{section:sing} that a blowup $\u_0$ is a limit of $\u_r$ along a sequence $r=r_j\ra 0+$ in $C_{loc}^2(\R^n\setminus\{0\};\R^m)$. 

\begin{lemma}\label{lemma:hom} Let $\u$ be a nonnegative solution of \eqref{eq:main} in $B_1\setminus\{0\}$ with $\frac{n}{n-2}<\alpha<\frac{n+2}{n-2}$, and let $\Phi(r,\u)$ be as in \eqref{eq:Phi}. Then $\Phi(0+,\u)\in \{-\bar\lambda,0\}$, where $\bar\lambda$ is given by \eqref{eq:lamb}. Moreover, the following are true.
\begin{enumerate}[(i)]
\item $\Phi(0+,\u) = 0$ if and only if  
\begin{equation}\label{eq:hom-reg}
|\u(x)| = o(|x|^{-\frac{2}{\alpha-1}})\quad\text{as }x\ra 0.
\end{equation}
\item $\Phi(0+,\u) = -\bar\lambda$ if and only if 
\begin{equation}\label{eq:hom-sing}
|\u(x)| = (1+ o(1)) \lambda^{\frac{1}{\alpha-1}} |x|^{-\frac{2}{\alpha-1}}\quad\text{as }x\ra 0,
\end{equation}
where $\lambda$ is given by \eqref{eq:lam-mu}.
\end{enumerate}
\end{lemma}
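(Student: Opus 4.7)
The strategy is a blowup analysis based on the monotonicity formula and the classification of homogeneous solutions on the punctured space. The key players are the scaling $\u_r(x)=r^{2/(\alpha-1)}\u(rx)$, the relation $\Phi(rs,\u)=\Phi(s,\u_r)$, and Lemma \ref{lemma:class-sub}(ii).

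First, I would establish precompactness of the family $\{\u_r\}_{0<r\le 1/4}$ in $C^2_{loc}(\R^n\setminus\{0\};\R^m)$. By Lemma \ref{lemma:u-sup}, $|\u_r(x)|\le C|x|^{-2/(\alpha-1)}$ on $B_{1/(2r)}\setminus\{0\}$, so $\u_r$ is locally uniformly bounded on $\R^n\setminus\{0\}$ as $r\to0+$; interior $C^{2,\gamma}$-regularity (applied as in Lemma \ref{lemma:u-cpt}) then gives compactness, so every sequence $r_j\to 0+$ admits a subsequential limit $\u_0\in C^2(\R^n\setminus\{0\};\R^m)$ solving \eqref{eq:main} on the punctured space. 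Next, by Proposition \ref{proposition:Phi-monot}(i) the function $r\mapsto\Phi(r,\u)$ is nondecreasing and (via Lemma \ref{lemma:u-sup} plus the formula \eqref{eq:Phi}) bounded below near $0$, so $\Phi(0+,\u)$ exists. The scaling relation \eqref{eq:Phi-scale} yields $\Phi(s,\u_0)=\lim_j\Phi(sr_j,\u)=\Phi(0+,\u)$ for every $s>0$, i.e.\ $\Phi(\,\cdot\,,\u_0)$ is constant. Since $\alpha<\frac{n+2}{n-2}$ forces $\mu\ne 0$, Proposition \ref{proposition:Phi-monot}(i) implies that $\u_0$ is homogeneous of degree $-\frac{2}{\alpha-1}$.

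Invoking Lemma \ref{lemma:class-sub}(ii), each blowup $\u_0$ is either trivial or of the form \eqref{eq:u-s}. A direct substitution gives $\Phi(s,\u_0)=0$ in the first case and $\Phi(s,\u_0)=-\bar\lambda$ in the second (this is exactly where $\bar\lambda$ from \eqref{eq:lamb} enters). Combining this with the equality $\Phi(s,\u_0)=\Phi(0+,\u)$ proves the dichotomy $\Phi(0+,\u)\in\{-\bar\lambda,0\}$, and moreover shows that the type of blowup is the same along every subsequence, being determined solely by the value $\Phi(0+,\u)$.

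For (i), $\Phi(0+,\u)=0$ forces every subsequential blowup to be trivial, so the full family satisfies $\u_r\to 0$ in $C^2_{loc}(\R^n\setminus\{0\})$; evaluating on $\partial B_1$ gives $\sup_{\partial B_1}|\u_r|\to 0$, which rephrased in terms of $\u$ is precisely \eqref{eq:hom-reg}. Conversely, if \eqref{eq:hom-reg} holds then any blowup has $|\u_0|\equiv 0$ on $\partial B_1$ by passing to the limit, so $\u_0$ is trivial and hence $\Phi(0+,\u)=\Phi(s,\u_0)=0$. For (ii), the assumption $\Phi(0+,\u)=-\bar\lambda$ forces every blowup to be of the form $\lambda^{1/(\alpha-1)}|x|^{-2/(\alpha-1)}\ee$ for some unit nonnegative $\ee$; the critical observation is that although the direction $\ee$ could a priori depend on the subsequence, the modulus $|\u_0(x)|=\lambda^{1/(\alpha-1)}|x|^{-2/(\alpha-1)}$ is the same for every subsequential limit, so a standard subsequence-of-subsequence argument yields $|\u_r|\to \lambda^{1/(\alpha-1)}|x|^{-2/(\alpha-1)}$ uniformly on $\partial B_1$ as $r\to 0+$, which is \eqref{eq:hom-sing}. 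The converse again follows by passing to the limit in $\Phi(r,\u_r)$.

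The only mildly delicate point is the last one: ensuring that the convergence in (ii) is along the full parameter $r\to 0+$ (not just a subsequence), given that the limiting direction $\ee\in\Ss^{m-1}_+$ need not be unique. This is handled at the level of $|\u_r|$ rather than $\u_r$, since $|\u_0|$ is a fixed radial function across all blowups, making the argument routine once the classification step is in place.
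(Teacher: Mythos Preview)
Your proposal is correct and follows essentially the same route as the paper's own proof: precompactness of $\u_r$ via Lemma~\ref{lemma:u-sup} and interior estimates, existence of $\Phi(0+,\u)$ from monotonicity plus boundedness, constancy of $\Phi(\cdot,\u_0)$ for any blowup via the scaling relation~\eqref{eq:Phi-scale}, and then the classification Lemma~\ref{lemma:class-sub}(ii) to read off the dichotomy and the asymptotics on $\partial B_1$. Your treatment is in fact slightly more explicit than the paper's on the point that the direction $\ee$ of the blowup need not be unique while $|\u_0|$ is, which is exactly what makes the full-limit (rather than subsequential) conclusion in~(ii) go through.
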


\begin{proof} Due to the estimates \eqref{eq:u-sup} and \eqref{eq:Du-Linf}, we know that $\Phi(r,\u)$ in \eqref{eq:Phi} is uniformly bounded for all $0<r<\frac{1}{2}$. This combined with the monotonicity (Proposition \ref{proposition:Phi-monot} (i)) implies that $\Phi(0+,\u)$ exists. Hence, we may argue analogously as the proof of Lemma \ref{lemma:u-Phi} and observe that any blowup $\u_0$ of $\u$ satisfies $\Phi(r,\u_0) = \Phi(0+,\u)$ for all $r>0$. As $\u_0$ being a nonnegative solution of \eqref{eq:main} in $\R^n\setminus\{0\}$, it follows from Lemma \ref{lemma:class-sub} (ii) that $\Phi(0+,\u) = 0$ if and only if any blowup $\u_0$ of $\u$ is trivial, while $\Phi(0+,\u) = -\bar\lambda$ if and only if any blowup of $\u_0$ is of the form $\lambda^{\frac{1}{\alpha-1}} |x|^{-\frac{2}{\alpha-1}}\ee$ with some nonnegative unit vector $\ee\in\R^m$. In other words, $\Phi(0+,\u) = 0$ if and only if $|\u_r| \ra 0$ uniformly on $\partial B_1$, while $\Phi(0+,\u) = - \bar\lambda$ if and only if $|\u_r| \ra \lambda^{\frac{1}{\alpha-1}}$ uniformly on $\partial B_1$, where $\u_r$ is the scaling function defined by \eqref{eq:ur}. This finishes the proof. 
\end{proof}

The next lemma shows that \eqref{eq:hom-reg} is sufficient for the origin to be a removable singularity. 

\begin{lemma}\label{lemma:remv+sub} Let $\u$ be a nonnegative solution of \eqref{eq:main} in $B_1\setminus\{0\}$ with $\frac{n}{n-2}<\alpha<\frac{n+2}{n-2}$. If $\u$ satisfies 
\begin{equation}\label{eq:remv+sub}
|\u(x)| = o(|x|^{-\frac{2}{\alpha-1}} )\quad\text{as }x\ra 0,
\end{equation}
then the origin is a removable singularity. 
\end{lemma}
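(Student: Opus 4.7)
The plan is to reduce the system to a scalar differential inequality of Lane--Emden type and then invoke a removability argument based on the Brezis--Lions decomposition combined with a Moser-type bootstrap.

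Set $w:=u_1+\cdots+u_m$. Then $w\ge 0$, $|\u|\le w\le\sqrt{m}\,|\u|$, and, as observed in Remark~\ref{remark:u-sup}, $w$ satisfies the two-sided inequality
\[
m^{-(\alpha-1)/2}\,w^\alpha \;\le\; -\Delta w \;\le\; w^\alpha \quad\text{in }B_1\setminus\{0\}.
\]
The hypothesis $|\u(x)|=o(|x|^{-2/(\alpha-1)})$ translates directly into $w(x)=o(|x|^{-2/(\alpha-1)})$ as $x\to 0$, so it is enough to show removability for this scalar $w$.

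Two observations then do most of the work. First, the subcriticality $\alpha>n/(n-2)$ is equivalent to $2\alpha/(\alpha-1)<n$, so the decay implies $w^\alpha=o(|x|^{-2\alpha/(\alpha-1)})\in L^1_{\mathrm{loc}}(B_1)$; by Lemma~\ref{lemma:dist}, $w$ is a distributional solution of $-\Delta w=|\u|^{\alpha-1}w$ on the full ball $B_1$. Second, a Brezis--Lions-type decomposition for nonnegative distributional super-harmonic functions in a punctured ball yields, near the origin, a representation
\[
w(x)=c\,c_n\,|x|^{2-n}+\tilde w(x), \qquad c\ge 0,
\]
where $c\,c_n$ accounts for a possible Dirac mass of $-\Delta w$ at $0$ and $\tilde w$ is the regular part. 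The hypothesis together with $n-2>2/(\alpha-1)$---again the subcriticality---gives $|x|^{n-2}w(x)\to 0$ as $x\to 0$, forcing $c=0$.

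With $c=0$, $w$ is a genuine distributional solution of $-\Delta w=V(x)w$ on $B_1$, where $V=|\u|^{\alpha-1}$ satisfies $V(x)|x|^2\to 0$. A Moser-type iteration---testing against $\varphi^2 w^{2\beta-1}$ with $\varphi$ a cutoff, and using Hardy's inequality $\int w^{2\beta}/|x|^2\le \tfrac{4}{(n-2)^2}\int|\nabla w^\beta|^2$ to absorb the Hardy potential $V$ (whose coefficient $\eta=\sup_{B_\delta}V|x|^2$ is arbitrarily small once $\delta$ is small enough)---then propagates integrability from an initial $L^{p_0}_{\mathrm{loc}}(B_1)$, $p_0>1$ (supplied by the hypothesis), up to $L^\infty_{\mathrm{loc}}$ in finitely many iterations. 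This shows that $w$ is bounded near the origin, and hence so is $|\u|\le w$, which is the claimed removable singularity.

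The main obstacle is precisely the final Moser bootstrap: the forcing $Vw$ is only marginally above $L^1_{\mathrm{loc}}$ and below the $L^{n/2}$ threshold that would give $w\in L^\infty$ by a direct Calder\'on--Zygmund estimate. The leverage that makes the iteration close is the arbitrary smallness of $\eta$ on small balls together with the sharp Hardy constant $(n-2)^2/4$, which keep the perturbed operator $-\Delta-V$ coercive and guarantee a genuine regularity gain at every step.
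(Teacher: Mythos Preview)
Your proposal is correct and would work, but it takes a different route from the paper's proof. The paper argues via an explicit barrier: one constructs the supersolution
\[
\vp_\e(x)=\bigl(C_0 r_0^{-\delta}|x|^{\delta}+\e\bigr)\,|x|^{-\frac{2}{\alpha-1}},
\]
checks directly that $\Delta\vp_\e\le -\tfrac{\lambda}{2}|x|^{-2}\vp_\e$ for small $\delta>0$, and then uses the hypothesis $|\u|^{\alpha-1}=o(|x|^{-2})$ together with the maximum principle (applied to each component $u_i$ as a solution of $-\Delta u_i=a(x)u_i$ with $a=|\u|^{\alpha-1}$) to deduce the improved decay $|\u(x)|\le c\,|x|^{-\frac{2}{\alpha-1}+\delta}$. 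This puts the potential $a$ into $L^{n/(2-\eta)}(B_1)$ for some $\eta>0$, after which Serrin's classical removability theorem finishes the argument. Your approach replaces the barrier step by a Moser iteration in which the smallness of $V|x|^2$ is exploited through Hardy's inequality to absorb the zero-order term; it is more functional-analytic and avoids constructing an explicit comparison function, but requires some care in setting up the iteration (initial integrability and $H^1_{\mathrm{loc}}$ membership of $w$), which the paper's barrier argument sidesteps entirely via the maximum principle. Note also that your Brezis--Lions step is redundant here: Lemma~\ref{lemma:dist} already gives that $\u$ is a distributional solution across the origin for $\alpha>\tfrac{n}{n-2}$, so the Dirac coefficient $c$ vanishes without invoking the decay hypothesis.
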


\begin{proof} Under the assumption \eqref{eq:remv+sub}, we claim that 
\begin{equation}\label{eq:remv-re}
|\u(x)| \leq c|x|^{-\frac{2}{\alpha-1} + \delta}\quad\text{in }B_{r_0}\setminus\{0\},
\end{equation}
for some $\delta>0$, $r_0>0$ and  $c>1$, where $c$ and $r_0$ may depend on $\u$. 

Consider the auxiliary function
\begin{equation}\label{eq:vpe}
\vp_\e (x) = ( C_0 r_0^{-\delta} |x|^\delta + \e )|x|^{-\frac{2}{\alpha-1}}\quad\text{in }\R^n\setminus\{0\},
\end{equation}
where $C_0>0$ is the (universal) constant from \eqref{eq:u-sup}, $r_0>0$ is a small radius to be determined later and $\e>0$ is an arbitrary small number. By direct computation, we observe that 
\begin{equation*}
\Delta \vp_\e = - ( C_0r_0^{-\delta} (\lambda + \mu\delta -\delta^2) |x|^\delta + \e \lambda )|x|^{\frac{2\alpha}{1-\alpha}}\quad\text{in }\R^n\setminus\{0\},
\end{equation*}
with $\lambda$ and $\mu$ given by \eqref{eq:lam-mu}. Note that for $\alpha > \frac{n}{n-2} $, we have $\lambda > 0$. Thus, taking $\delta>0$ sufficiently small depending only on $\lambda$ and $|\mu|$, we obtain
\begin{equation}\label{eq:vpe-pde0}
\Delta \vp_\e \leq  -\frac{\lambda}{2|x|^2} \vp_\e\quad\text{in }\R^n\setminus\{0\}.
\end{equation}

Let us fix $1\leq i\leq m$ and consider the $i$-th component $u_i$ of $\u$ as a solution of $\Delta u_i = -a(x) u_i$ in $B_1\setminus\{0\}$ with $a(x) = |\u|^{\alpha-1}$. Due to \eqref{eq:remv+sub}, there exists $r_0>0$ such that $0\leq a(x) \leq \frac{\lambda}{2|x|^2}$ in $B_{r_0}\setminus\{0\}$, and hence, it follows from \eqref{eq:vpe-pde0} that $\vp_\e$ is a supersolution of $\Delta u_i = -a(x) u_i$ in $B_{r_0}\setminus\{0\}$. That is,
\begin{equation}\label{eq:vpe-pde-re}
\Delta \vp_\e \leq - a(x) \vp_\e\quad\text{in }B_{r_0}\setminus\{0\}.
\end{equation}

On the other hand, choosing $C_0>0$ to be the constant for which $|\u|$ satisfies \eqref{eq:u-sup}, we have $u_i \leq C_0r_0^{-\frac{2}{\alpha-1}} \leq \vp_\e$ on $\partial B_{r_0}$. Utilizing the assumption \eqref{eq:remv+sub} again, one can find a sufficiently small $0<r<r_0$ such that $u_i \leq \e |x|^{-\frac{2}{\alpha-1}} \leq \vp_\e$ in $B_r\setminus\{0\}$. Therefore,
\begin{equation}\label{eq:vpe-u-bdry}
u_i \leq \vp_\e\quad\text{on }(\partial B_{r_0}) \cup (B_r\setminus\{0\}).
\end{equation}

In view of \eqref{eq:vpe-pde-re} and \eqref{eq:vpe-u-bdry}, we may apply the maximum principle in $B_{r_0}\setminus B_r$ and obtain $u_i \leq \vp$ in $B_{r_0}\setminus \bar{B}_r$. Combining this inequality with \eqref{eq:vpe-u-bdry}, we arrive at
\begin{equation}\label{eq:vpe-u-int}
u_i \leq \vp_\e\quad\text{in }B_{r_0}\setminus\{0\}.
\end{equation}
Since the parameters $C_0$, $r_0$ and $\delta$ in the definition \eqref{eq:vpe} of $\vp_\e$ are independent of $\e$, we can take $\e\ra 0$ in \eqref{eq:vpe-u-int} and obtain 
\begin{equation*}
u_i(x) \leq C_0r_0^{-\delta}|x|^{-\frac{2}{\alpha-1} + \delta}\quad\text{in }B_{r_0}\setminus\{0\}.
\end{equation*}
Now that this inequality holds for any $1\leq i\leq m$, we arrive at \eqref{eq:remv-re} with $c = C_0r_0^{-\delta}\sqrt{m}$. 

Since $a(x) = |\u|^{\alpha-1}$, we have from \eqref{eq:remv-re} that $0\leq a(x) \leq c|x|^{-2+(\alpha-1)\delta}$ on $B_{r_0}\setminus\{0\}$, which certainly implies $a\in L^{\frac{n}{2-\eta}}(B_1)$ for some small $\eta>0$. According to Lemma \ref{lemma:dist}, $u_i$ satisfies $-\Delta u_i = a(x) u_i$ in $B_1$ in the distributional sense for each $1\leq i\leq m$, whence the classical result by Serrin \cite[Theorem 1]{S} yields that $u_i$ has a removable singularity at the origin. This proves the lemma.
\end{proof}

\begin{remark}\label{remark:remv+sub} One may have noticed that the proof of Lemma \ref{lemma:remv+sub} works for the upper critical case, $\alpha = \frac{n+2}{n-2}$, without any modification. 
\end{remark}

We are ready to prove Theorem \ref{theorem:main} (i).

\begin{proof}[Proof of Theorem \ref{theorem:main}] Suppose that $\u$ has a non-removable singularity at the origin. Then by Lemma \ref{lemma:remv+sub}, $\u$ does not satisfy \eqref{eq:remv+sub}, whence it follows from Lemma \ref{lemma:hom} that $\u$ satisfies \eqref{eq:hom-sing}, which proves \eqref{eq:asym+sub} 
\end{proof}


\subsection{Case $\alpha = \frac{n+2}{n-2}$}\label{subsection:asym+crit}

The asymptotic behavior for the case $\alpha = \frac{n+2}{n-2}$ becomes more subtle, due to the presence of the second Pohozaev invariant $\kappa_*$ given by \eqref{eq:kappa*}. The following lemma is the local version of Theorem \ref{theorem:main-s} (iii). Let us remark that the proof is similar to the classical argument (c.f. the proof of \cite[Theorem 1.2]{CGS}); however, the key difference is that we apply the radial symmetry to the second Pohozaev identity \eqref{eq:kappa*}, instead of the first identity \eqref{eq:kappa}. 

\begin{lemma}\label{lemma:hom+} Let $\u$ be a nonnegative solution of \eqref{eq:main} in $B_1\setminus\{0\}$ with $\alpha=\frac{n+2}{n-2}$. Also set $\kappa(\u)$ and $\kappa_*(\u)$ as in \eqref{eq:kappa} and respectively \eqref{eq:kappa*}. Then $\kappa(\u)$ and $\kappa_*(\u)$ satisfy \eqref{eq:class-crit-kappa} and respectively \eqref{eq:class-crit-kappa*}. Moreover, the following are true. 
\begin{enumerate}[(i)]
\item $\kappa(\u) = \kappa_*(\u) = 0$  if and only if 
\begin{equation}\label{eq:hom+-reg}
|\u(x)| = o(|x|^{-\frac{n-2}{2}}) \quad\text{as }x\ra 0.
\end{equation}
\item $\kappa(\u)^2 + \kappa_*(\u)^2 >0$ if and only if there are $c,C>0$ such that 
\begin{equation}\label{eq:hom+-sing}
c|x|^{-\frac{n-2}{2}} \leq |\u(x)| \leq C|x|^{-\frac{n-2}{2}}\quad\text{as }x\ra 0,
\end{equation}
where $c$ depends on $\u$ while $C$ is determined by $n$ and $m$ only.
\item $\kappa(\u) = -\frac{2}{n} (\frac{n-2}{2})^n$ and $\kappa_*(\u) = 0$ if and only if 
\begin{equation}\label{eq:hom+-hom}
|\u(x)| = (1+ o(1)) \left(\frac{n-2}{2}\right)^{\frac{n-2}{2}} |x|^{-\frac{n-2}{2}}\quad\text{as }x\ra 0.
\end{equation}
\end{enumerate}
\end{lemma}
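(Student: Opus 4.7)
The plan is to mirror the proof of Theorem \ref{theorem:main-s} (iii) for the punctured-space case, with two additional ingredients: a blowup argument that transfers the Pohozaev invariants of the local solution $\u$ to any subsequential limit of the scaled family $\{\u_r\}$ on $\R^n\setminus\{0\}$, and Theorem \ref{theorem:asym-rad}, which reduces the asymptotic study of $\u$ near the origin to a radial ODE in cylindrical coordinates.

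First I would establish the bounds \eqref{eq:class-crit-kappa} and \eqref{eq:class-crit-kappa*} together with the simultaneous equality at the lower bounds. By Lemmas \ref{lemma:u-sup-crit} and \ref{lemma:u-har-Du}, the family $\{\u_r\}_{0<r<1/2}$ is precompact in $C^{2,\gamma}_{loc}(\R^n\setminus\{0\};\R^m)$, so any sequence $r_j\to 0$ yields a subsequential limit $\u_0$ solving \eqref{eq:main} on $\R^n\setminus\{0\}$. The scaling relations \eqref{eq:Phi-scale}, \eqref{eq:Phi*-scale} combined with the constancy in Propositions \ref{proposition:Phi-monot} (ii) and \ref{proposition:Phi*-const} force $\kappa(\u_0)=\kappa(\u)$ and $\kappa_*(\u_0)=\kappa_*(\u)$, and Lemma \ref{lemma:class-crit} applied to $\u_0$ transfers the bounds and the simultaneous-equality statement to $\u$.

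Next, by Theorem \ref{theorem:asym-rad} and elliptic regularity, the cylindrical transform satisfies $\v(t,\theta)=(1+O(e^{-t}))\bar\v(t)$ uniformly in $\theta$ as $t\to\infty$, together with analogous bounds on the derivatives. Substituting into the constancy identities $\Psi(t,\v)\equiv\kappa$ and $\Psi_*(t,\v)\equiv\kappa_*$, and using $|\nabla_\theta\v|\to 0$ uniformly, one derives the asymptotic Pohozaev identity
\begin{equation*}
\left(\frac{d|\bar\v|}{dt}\right)^2 = \frac{(n-2)^2}{4}|\bar\v|^2 - \frac{n-2}{n}|\bar\v|^{\frac{2n}{n-2}} + \kappa + \frac{\kappa_*}{|\bar\v|^2} + o(1)\quad\text{as }t\to\infty,
\end{equation*}
i.e. the asymptotic form of \eqref{eq:|v|-poho}. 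A classical Fowler/CGS phase-plane analysis of the unperturbed autonomous ODE then governs the three regimes. For (i), when $\kappa=\kappa_*=0$, the only bounded trajectories of the limiting ODE are the Yamabe-bubble orbit (decaying to zero as $t\to\infty$) and the trivial orbit, so $|\bar\v(t)|\to 0$, giving \eqref{eq:hom+-reg}; the converse follows from Remark \ref{remark:remv+sub} and a direct computation showing $\Phi(r,\u),\Phi_*(r,\u)\to 0$ as $r\to 0$ for any $\u$ extending smoothly across the origin (note $\mu=0$ in the critical case). For (ii), the contrapositive of (i) together with Remark \ref{remark:remv+sub} yields non-removability, and the Fowler-type periodic bounded-away-from-zero behavior of $|\bar\v|$ provides the lower bound $c|x|^{-(n-2)/2}$ (with $c$ depending on $\u$); the universal upper bound $C|x|^{-(n-2)/2}$ (with $C$ depending only on $n,m$) follows because the allowed range of $\kappa,\kappa_*$ from Step 1 forces a universal a priori bound on $|\bar\v|$ through the ODE, which then propagates to $|\u|$ via asymptotic radial symmetry and the Harnack inequality \eqref{eq:u-har}. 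For (iii), with $(\kappa,\kappa_*)=(-\tfrac{2}{n}(\tfrac{n-2}{2})^n,0)$, the right-hand side of the ODE is non-positive with equality only at $|\bar\v|=((n-2)/2)^{(n-2)/2}$, forcing $|\bar\v(t)|\to((n-2)/2)^{(n-2)/2}$ and hence \eqref{eq:hom+-hom}.

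The main technical obstacle is to make the asymptotic ODE analysis rigorous, in particular to control the perturbative $o(1)$ (and possibly $o(1)/|\bar\v|^2$) error terms so that they do not disrupt the convergence predicted by the unperturbed phase portrait. I would address this through a cylindrical blowup argument: for any sequence $t_k\to\infty$, the shifts $\v_k(t,\theta)=\v(t+t_k,\theta)$ are precompact in $C^2_{loc}(\R\times\Ss^{n-1};\R^m)$, and any subsequential limit $\v_\infty$ solves \eqref{eq:main-cyl} on $\R\times\Ss^{n-1}$ with the same $\kappa,\kappa_*$ and is radial by Theorem \ref{theorem:asym-rad}. Theorem \ref{theorem:main-s} (iii) then classifies $\v_\infty$ into precisely the profile corresponding to (i), (ii), or (iii); uniqueness of the limiting profile in each regime delivers the uniform convergence of $|\bar\v(t)|$ and, via \eqref{eq:u-har}, of $|\u(x)|/|x|^{-(n-2)/2}$ as $|x|\to 0$.
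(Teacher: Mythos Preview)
Your overall strategy matches the paper's: transfer the invariants to blowup limits via \eqref{eq:Phi-scale}, \eqref{eq:Phi*-scale} and invoke Lemma \ref{lemma:class-crit} for the bounds; then combine the asymptotic radial symmetry with the second Pohozaev identity and a blowup/classification argument for the three regimes. However, two steps in your plan do not go through as written.

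\textbf{Gap in (i).} Your ``uniqueness of the limiting profile'' claim fails precisely in the regime $\kappa=\kappa_*=0$. The blowup limits $\v_\infty$ are classified by Theorem \ref{theorem:main-s} (iii)(a) as cylindrical transforms of \eqref{eq:u-g}; these are the bubble orbits, and by translation in $t$ the value $|\v_\infty(0)|$ ranges over the whole interval $\bigl(0,\bigl(\tfrac{n(n-2)}{4}\bigr)^{(n-2)/4}\bigr]$. Thus compactness plus classification of limits alone cannot force $|\bar\v(t)|\to 0$. The paper closes this gap differently: it writes the asymptotic second Pohozaev identity for $g(t)=\fint_{\Ss^{n-1}}|\v|^2$ (your displayed ODE, but for $g$) with an exponentially decaying error, and observes that this forces $g$ to be eventually \emph{monotone}, hence convergent to one of the two nonnegative roots $0$ or $\bigl(\tfrac{n(n-2)}{4}\bigr)^{(n-2)/2}$. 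Only then is the blowup argument invoked, to rule out the second root (a blowup with that modulus would have non-removable singularity, contradicting $\kappa(\u_0)=\kappa_*(\u_0)=0$ via Theorem \ref{theorem:main-s} (iii)(a)). The monotonicity step is essential and is missing from your outline.

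\textbf{Gap in (ii).} Your argument for the universal upper bound $C=C(n,m)$ is incorrect: the bounds \eqref{eq:class-crit-kappa}--\eqref{eq:class-crit-kappa*} give no upper bound on $\kappa$, so the ODE alone only yields $|\bar\v|\le C(\kappa,\kappa_*)$, i.e.\ a constant depending on $\u$. The paper obtains the universal $C$ by applying Lemma \ref{lemma:u-s} directly to any blowup $\u_0$ (which is radially symmetric by Theorem \ref{theorem:asym-rad}), giving $|\u_0(x)|\le C(n,m)|x|^{-(n-2)/2}$ independently of the invariants. Your argument for the lower bound $c$ via Fowler's analysis of \eqref{eq:|v|-poho} on the blowup, and your treatment of (iii), are correct and coincide with the paper's.
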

 
\begin{proof} The existence of $\kappa(\u)$ and $\kappa_*(\u)$ are proved in Proposition \ref{proposition:Phi-monot} (ii) and respectively Proposition \ref{proposition:Phi*-const}. Now let $\u_0$ be any blowup of $\u$, and write $r_j\ra 0+$ by the blowup sequence. By the scaling relation \eqref{eq:Phi-scale} of $\Phi(r,\u)$, we see that
\begin{equation*}
\kappa(\u_0) = \Phi(1,\u_0) = \lim_{j\ra\infty} \Phi(1,\u_{r_j}) = \lim_{j\ra\infty} \Phi(r_j,\u) = \kappa(\u). 
\end{equation*}
However, $\u_0$ is a nonnegative solution of \eqref{eq:main} (with $\alpha = \frac{n+2}{n-2}$) in $\R^n\setminus\{0\}$, whence Lemma \ref{lemma:class-crit} yields $\kappa(\u_0)$ satisfies \eqref{eq:class-crit-kappa}, and so does $\kappa(\u)$. Similarly, one may deduce from the scaling relation \eqref{eq:Phi*-scale} of $\Phi_*(r,\u)$ that $\kappa_*(\u) = \kappa_*(\u_0)$, and by Lemma \ref{lemma:class-crit}, $\kappa_*(\u)$ verifies \eqref{eq:class-crit-kappa*}.

Suppose that $\kappa(\u) = \kappa_*(\u) = 0$, and let $\v$ be the cylindrical transformation of $\u$ as in \eqref{eq:cyl}. Rephrasing the estimates \eqref{eq:Phi*-1} and \eqref{eq:Phi*-2} in terms of $\v$, the second Pohozaev identity \eqref{eq:kappa*-cyl} becomes (as $t\ra\infty$), 
\begin{equation}\label{eq:kappa*-cyl2}
(g')^2 = (n-2)^2 g^2 - \frac{4(n-2)}{n} g^{\frac{2n-2}{n-2}} +  O \left( \int_t^\infty e^{-2\tau} g(\tau)^2\,d\tau\right),
\end{equation}
where $g$ is given by \eqref{eq:g} and $g'=dg/dt$. Since the term $O(\int_t^\infty e^{-2\tau} g(\tau)^2\,d\tau)$ decays exponentially, and is comparably smaller than $g(t)$, the behavior of $g'$ is determined by the nonnegative roots of 
\begin{equation*}
(n-2)^2g^2 - \frac{4(n-2)}{n} g^{\frac{2n-2}{n-2}} = 0,
\end{equation*}
which are $0$ and $(\frac{n(n-2)}{4})^{\frac{n-2}{2}}$ respectively. In particular, $g(t)$ must be either non-increasing and converging to $0$, or nondecreasing and converging to $(\frac{n(n-2)}{4})^{\frac{n-2}{2}}$. 

If $g(t)\ra 0$ as $t\ra\infty$, then by the asymptotic radial symmetry we have $|\v(t,\cdot)| \ra 0$ uniformly on $\Ss^{n-1}$ as $t\ra \infty$. After the inverse cylindrical transform via \eqref{eq:cyl}, we arrive at \eqref{eq:hom+-reg}, as desired. 

Now let us show that the other alternative, i.e., $g(t)\ra (\frac{n(n-2)}{4})^{\frac{n-2}{2}}$ as $t\ra\infty$,  cannot occur. Suppose that this is true. Then again from the asymptotic radial symmetry it follows that $|\u_r|\ra (\frac{n(n-2)}{4})^{\frac{n-2}{2}}$ uniformly on $\partial B_1$ as $r\ra 0+$. This implies that any blowup $\u_0$ of $\u$ must be of the form $(\frac{n(n-2)}{4})^{\frac{n-2}{2}}|x|^{-\frac{n-2}{2}}\ee$ for some nonnegative unit vector $\ee\in\R^m$. In particular, $\u_0$ has a non-removable singularity at the origin, and hence Theorem \ref{theorem:main-s} (iii) yields that $\kappa(\u_0)$ or $\kappa_*(\u_0)$ is non-zero, a contradiction to $\kappa(\u) = \kappa(\u_0) = 0$ or, respectively, $\kappa_*(\u) = \kappa_*(\u_0) = 0$. Hence, the assertion (i) is proved. 

Now let us consider the case when $\kappa(\u)^2 + \kappa_*(\u)^2>0$. Let $\u_0$ be any blowup of $\u$. Then due to the asymptotic radial symmetry of $\u$, $\u_0$ is radially symmetric on the punctured space. Hence, by Lemma \ref{lemma:u-s}, we know that $|\u_0| \leq C|x|^{-\frac{n-2}{2}}$ where $C>0$ depends only on $n$ and $m$. Since $\u_0$ is an arbitrary blowup of $\u$, this proves the upper bound in \eqref{eq:hom+-sing}. 

On the other hand, by Theorem \ref{theorem:main-s} (iii)-(b), the cylindrical transform $\v_0$ of $\u_0$ satisfies \eqref{eq:|v|-poho}. Due to R. H. Fowler \cite{F}, $|\v_0|$ has to be bounded uniformly away from zero, with the bound determined solely on the value of $n$, $\kappa(v_0) =\kappa(u_0) = \kappa(u)$ and $\kappa_*(\v_0) = \kappa_*(\u)$. This proves that $|\u_0| \geq c|x|^{-\frac{n-2}{2}}$ for some $c>0$ depending only on $n$, $\kappa(\u)$ and $\kappa_*(\u)$. Since $c$ is independent of the blowup $\u_0$, the lower bound in \eqref{eq:hom+-sing} is proved. Thus, the assertion (ii) is proved. 

The final assertion regarding \eqref{eq:hom+-hom} follows immediately from Theorem \ref{theorem:main-s} (iii)-(c), since the latter implies that the blowup of $\u$ is unique and is of the form \eqref{eq:u-s-2}, if and only if $\kappa(\u) = -\frac{2}{n}(\frac{n-2}{2})^n$ and $\kappa_*(\u) = 0$. 
\end{proof}

As with Lemma \ref{lemma:remv+sub}, we observe that \eqref{eq:hom+-reg} is a sufficient condition to have a removable singularity. 

\begin{lemma}\label{lemma:remv+crit} Let $\u$ be a nonnegative solution of \eqref{eq:main} in $B_1\setminus\{0\}$ with $\alpha=\frac{n+2}{n-2}$. If $\u$ satisfies 
\begin{equation*}
|\u(x)| = o(|x|^{-\frac{n-2}{2}} )\quad\text{as }x\ra 0,
\end{equation*}
then the origin is a removable singularity. 
\end{lemma}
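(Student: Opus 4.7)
The plan is to mirror verbatim the strategy of Lemma \ref{lemma:remv+sub}; as Remark \ref{remark:remv+sub} points out, the barrier argument there is insensitive to whether $\alpha$ is subcritical or critical, so the main task is to verify that the relevant constants still have the right sign when $\alpha = \frac{n+2}{n-2}$. Under this choice we have $\frac{2}{\alpha-1}=\frac{n-2}{2}$ and $\lambda=\frac{(n-2)^2}{4}>0$, while $\mu=0$, which is precisely the configuration in which the auxiliary function defined in \eqref{eq:vpe} operates most cleanly.

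First I would, for a small parameter $\delta>0$ and an arbitrary $\e>0$, introduce the barrier
\begin{equation*}
\vp_\e(x) = \left( C_0 r_0^{-\delta} |x|^\delta + \e \right)|x|^{-\frac{n-2}{2}},
\end{equation*}
where $C_0$ comes from the a priori estimate \eqref{eq:u-sup-crit} (after normalizing by the factor $\min_{i\in I_m}\inf_{\partial B_{3/4}} u_i$, which is harmless since we only need an $x$-dependent pointwise upper bound near the origin). A direct computation as in the proof of Lemma \ref{lemma:remv+sub} gives
\begin{equation*}
\Delta\vp_\e = -\bigl(C_0 r_0^{-\delta}(\lambda - \delta^2)|x|^\delta + \e\lambda\bigr)|x|^{-\frac{n+2}{2}},
\end{equation*}
and choosing $\delta>0$ small enough (depending only on $\lambda$) forces $\Delta\vp_\e \leq -\tfrac{\lambda}{2|x|^2}\vp_\e$ on $\R^n\setminus\{0\}$.

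Next I would use the hypothesis $|\u(x)|=o(|x|^{-\frac{n-2}{2}})$, which upgraded through $a(x)=|\u|^{\alpha-1}=|\u|^{\frac{4}{n-2}}$ gives $0\leq a(x)\leq \frac{\lambda}{2|x|^2}$ on some small punctured ball $B_{r_0}\setminus\{0\}$. Hence $\vp_\e$ is a supersolution of $\Delta u_i = -a(x)u_i$ in $B_{r_0}\setminus\{0\}$ for every component $u_i$. On $\partial B_{r_0}$ the estimate \eqref{eq:u-sup-crit} guarantees $u_i\leq \vp_\e$, while the $o$-decay of $|\u|$ gives $u_i\leq \e|x|^{-\frac{n-2}{2}}\leq \vp_\e$ on some even smaller punctured ball $B_r\setminus\{0\}$. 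The maximum principle applied on the annulus $B_{r_0}\setminus\bar B_r$ then yields $u_i\leq \vp_\e$ throughout $B_{r_0}\setminus\{0\}$, and sending $\e\to 0$ produces the power bound
\begin{equation*}
|\u(x)| \leq c\,|x|^{-\frac{n-2}{2}+\delta} \quad\text{in } B_{r_0}\setminus\{0\}
\end{equation*}
for some $c>0$.

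Finally, the upgraded bound gives $a(x) = |\u|^{\frac{4}{n-2}} \leq c'\,|x|^{-2 + \frac{4\delta}{n-2}}$, so $a\in L^{n/(2-\eta)}(B_{r_0})$ for some small $\eta>0$. By Lemma \ref{lemma:dist}, $u_i$ is a distributional solution of $-\Delta u_i = a(x)u_i$ on all of $B_1$ (note $\frac{n+2}{n-2}\geq \frac{n}{n-2}$), so Serrin's removable singularity theorem \cite[Theorem 1]{S} applies componentwise and forces each $u_i$ to extend smoothly across the origin. I do not foresee a genuine obstacle: the only delicate point is the sign $\lambda>0$, which is precisely what fails in the lower critical regime $\alpha\leq\tfrac{n}{n-2}$ and what made the barrier fail for Lemma \ref{lemma:remv+sub} outside $\alpha>\tfrac{n}{n-2}$; here $\lambda=(n-2)^2/4$ is comfortably positive, so the entire argument transfers without modification.
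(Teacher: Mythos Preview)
Your proposal is correct and follows precisely the approach the paper intends: the paper's own proof simply invokes Remark \ref{remark:remv+sub} and says the argument of Lemma \ref{lemma:remv+sub} carries over verbatim, which is exactly what you have written out in detail (with the correct observation that $\mu=0$ slightly simplifies the Laplacian computation and $\lambda=(n-2)^2/4>0$ preserves the barrier inequality). Your handling of the constant $C_0$ via \eqref{eq:u-sup-crit}, allowing it to depend on $\u$, is also appropriate since the argument only requires a pointwise bound, not a universal one.
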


\begin{proof} As mentioned in Remark \ref{remark:remv+sub}, the same proof of Lemma \ref{lemma:remv+sub} works here as well, whence we leave out the details to the reader.  
\end{proof}

\begin{proof}[Proof of Theorem \ref{theorem:main} (ii)]
Suppose that the origin is a non-removable singularity, and let us write by $\kappa$ and $\kappa_*$ the first and respectively the second Pohozaev invariant. As a contraposition to Lemma \ref{lemma:remv+crit}, \eqref{eq:remv+sub} fails. Thus, by Lemma \ref{lemma:hom+}, one has $\kappa^2 + \kappa_*^2 >0$. Then the asymptotic bounds in \eqref{eq:asym+crit} follows from the second alternative, \eqref{eq:hom+-sing}, of Lemma \ref{lemma:hom+}, and the proof is finished.
\end{proof}


\subsection{Case $1<\alpha<\frac{n}{n-2}$}\label{subsection:asym-}

The asymptotic analysis for the case $1<\alpha<\frac{n}{n-2}$ is very simple. It is noticeable that the monotonicity formula is not required here. We also mention that one can reduce our study to the scalar case by considering $w = u_1+ u_2 + \cdots + u_m \geq 0$, and directly apply the results in \cite{L}. Nevertheless, we shall give a more direct proof, for the sake of completeness. 

We shall begin with the sharp upper estimate.

\begin{lemma}\label{lemma:sup-} Let $\u$ be a nonnegative solution of \eqref{eq:main} in $B_1\setminus\{0\}$ with $1<\alpha<\frac{n}{n-2}$. Then there is $C>0$, depending only $|\u|$, such that
\begin{equation}\label{eq:sup-}
|\u(x)| \leq C|x|^{2-n}\quad\text{as } x\ra 0. 
\end{equation}
\end{lemma}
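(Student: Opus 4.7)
The strategy is to reduce the problem to an ODE analysis on the spherical averages of each component $u_i$, exploiting the $L^1$ integrability of the right-hand side of the equation that is provided by Lemma~\ref{lemma:dist}.

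First, by Lemma~\ref{lemma:dist}, $\u \in L^\alpha(B_1;\R^m)$, so that $f_i := |\u|^{\alpha-1}u_i$ satisfies $0 \leq f_i \leq |\u|^\alpha$ with the majorant in $L^1(B_1)$; hence $f_i \in L^1(B_1)$ for each $1 \leq i \leq m$. Since each $u_i$ is $C^2$ on $B_1 \setminus \{0\}$ and solves $-\Delta u_i = f_i$ classically there, the spherical mean
\begin{equation*}
\bar u_i(r) = \frac{1}{n\omega_n r^{n-1}}\int_{\partial B_r} u_i \,d\sigma
\end{equation*}
is smooth on $(0,1)$ and, via the divergence theorem applied to $-\Delta u_i = f_i$ on annuli $B_r \setminus B_s$, satisfies the standard radial ODE
\begin{equation*}
\bigl(r^{n-1}\bar u_i'(r)\bigr)' = -r^{n-1}\bar f_i(r),
\end{equation*}
where $\bar f_i$ denotes the spherical mean of $f_i$.

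Integrating this identity from $r$ to $r_0 = 1/2$ gives
\begin{equation*}
r^{n-1}\bar u_i'(r) = r_0^{n-1}\bar u_i'(r_0) + \int_r^{r_0}\rho^{n-1}\bar f_i(\rho)\,d\rho,
\end{equation*}
and the right-hand side remains bounded uniformly as $r \to 0^+$ because
\begin{equation*}
\int_0^{r_0}\rho^{n-1}\bar f_i(\rho)\,d\rho = \frac{1}{n\omega_n}\int_{B_{r_0}} f_i\,dx < \infty.
\end{equation*}
Consequently $|\bar u_i'(r)| \leq Cr^{1-n}$ for $0 < r \leq r_0$, with $C$ depending on $\u$, and a second integration in $r$ then yields $\bar u_i(r) \leq Cr^{2-n}$ for all small $r > 0$.

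Finally, combining the Harnack inequality \eqref{eq:u-har} of Lemma~\ref{lemma:u-har} with the elementary bound $\inf_{\partial B_r} u_i \leq \bar u_i(r)$ produces $\sup_{\partial B_r} u_i \leq C\bar u_i(r) \leq Cr^{2-n}$, so that $u_i(x) \leq C|x|^{2-n}$ in a punctured neighborhood of the origin; summing over $i$ yields \eqref{eq:sup-}. The crux of the argument is the $L^1$ integrability of $f_i$, which is exactly what the subcritical condition $\alpha < n/(n-2)$ buys through Lemma~\ref{lemma:dist}; without it, the limit of $r^{n-1}\bar u_i'(r)$ as $r \to 0^+$ could fail to be finite and the $|x|^{2-n}$ upper bound would generally break down.
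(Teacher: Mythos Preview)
Your proof is correct and takes a genuinely different, more elementary route than the paper's.

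The paper proceeds by first combining the $L^\alpha$ integrability from Lemma~\ref{lemma:dist} with the Harnack inequality to deduce $|\u(x)| = o(|x|^{-2/(\alpha-1)})$, then invokes a blowup argument to upgrade this to $|\u(x)| \leq |x|^{-q}$ for any fixed $n-2 < q < \frac{2}{\alpha-1}$, and finally uses the Riesz decomposition
\[
\u(x) = |x|^{2-n}\a - \int_{B_{r_q}} |x-y|^{2-n}\,\Delta\u(y)\,dy + \h(x),
\]
estimating the Newtonian potential of $|\u|^\alpha$ via the intermediate $|x|^{-q}$ bound. Your argument bypasses both the blowup step and the potential estimate: once $f_i = |\u|^{\alpha-1}u_i \in L^1(B_1)$ is known from Lemma~\ref{lemma:dist}, two elementary integrations of the radial ODE for the spherical mean already give $\bar u_i(r) \leq Cr^{2-n}$, and the Harnack inequality \eqref{eq:u-har} converts this into a pointwise bound. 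The paper's route has the side benefit of producing the full Riesz decomposition (in particular identifying the leading coefficient $\a$), which is useful if one later wants finer asymptotics; your route is shorter and uses only the $L^1$ integrability of the right-hand side, which is the minimal information required for the $|x|^{2-n}$ upper bound.

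One minor point: the Harnack inequality you cite is stated in Lemma~\ref{lemma:u-har-Du}; the label \texttt{lemma:u-har} you reference does not exist in the paper.
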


\begin{proof} Lemma \ref{lemma:dist} asserts that $\u\in L^\alpha(B_1)$. Since $1<\alpha<\frac{n}{n-2}$ and $\u$ satisfies the Harnack inequality \eqref{eq:u-har}, it is easy to verify that
\begin{equation}\label{eq:u-sup-0}
|\u(x)| = o(|x|^{-\frac{2}{\alpha-1}})\quad\text{as }x\ra 0. 
\end{equation}
Utilizing \eqref{eq:u-sup-0}, and noting that $n-2<\frac{2}{\alpha-1}$, one may argue with a blowup argument to prove that for any $n-2<q<\frac{2}{\alpha-1}$, there is some $0<r_q<1$, depending only on $n$, $m$, $\alpha$ and $q$, such that 
\begin{equation}\label{eq:u-sup-q}
|\u(x)| < |x|^{-q}\quad\text{in }B_{r_q}\setminus\{0\}.
\end{equation}

Now let $r_q$ be as in \eqref{eq:u-sup-q}. Due to Lemma \ref{lemma:dist} again, $\Delta \u = - |\u|^{\alpha-1}\u\in L^1(B_1)$, whence one can decompose $\u$, in $B_{r_q}\setminus\{0\}$, as 
\begin{equation}\label{eq:u-exp}
\u(x) = |x|^{2-n}\a - \int_{B_{r_q}} |x-y|^{2-n} \Delta \u(y)\,dy + \h(x),
\end{equation}
where $\a$ is a nonnegative vector in $\R^m$ and $\h$ is a nonnegative and harmonic, vectorial function on $B_{r_q}$. However, owing to the estimate \eqref{eq:u-sup-q}, it is not hard to see from the equation $\Delta \u = -|\u|^{\alpha-1}\u$ that there is $C_q>0$, depending only on $n$, $m$, $\alpha$ and $q$, such that
\begin{equation}\label{eq:newton}
\left| \int_{B_{r_q}} |x-y|^{2-n} \Delta \u(y)\,dy \right| \leq \int_{B_{r_q}} |x-y|^{2-n} |y|^{-\alpha q}\,dy \leq C_q|x|^{2-n}. 
\end{equation}
Thus, choosing $n-2<q<\frac{2}{\alpha-1}$ so as to depend only on $n$ and $\alpha$, and selecting $r_q$ and $C_q$ in \eqref{eq:newton} correspondingly, we derive the sharp estimate \eqref{eq:sup-} from \eqref{eq:u-exp}. 
\end{proof}

Next we consider a sufficient condition to have a removable singularity. 

\begin{lemma}\label{lemma:remv-} Let $\u$ be a nonnegative solution of \eqref{eq:main} in $B_1\setminus\{0\}$ with $1<\alpha<\frac{n}{n-2}$. If $\u$ satisfies
\begin{equation}\label{eq:remv-}
|\u(x)| = o( |x|^{2-n} ) \quad\text{as }x\ra 0,
\end{equation}
then the origin is a removable singularity.
\end{lemma}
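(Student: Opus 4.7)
The plan is to mirror, essentially verbatim, the argument used in the proof of Lemma \ref{lemma:remv+sub}: view each scalar component $u_i$ as a solution of a linear Schrödinger-type equation $\Delta u_i + a(x) u_i = 0$, verify that the zeroth-order coefficient $a$ lies in $L^p$ with $p > n/2$, and invoke Serrin's classical removable singularity theorem.

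The key (and essentially only) observation is that $1 < \alpha < \frac{n}{n-2}$ is equivalent to $(\alpha-1)(n-2) < 2$. From the hypothesis $|\u(x)| = o(|x|^{2-n})$, one has in particular $|\u(x)| \leq C|x|^{2-n}$ in some $B_{r_0}\setminus\{0\}$, and consequently
\begin{equation*}
a(x) := |\u(x)|^{\alpha-1} \leq C|x|^{-(\alpha-1)(n-2)} \quad \text{in } B_{r_0}\setminus\{0\}.
\end{equation*}
Since $(\alpha-1)(n-2) < 2$, one can pick $\eta > 0$ so small that $(\alpha-1)(n-2) \cdot \frac{n}{2-\eta} < n$; this yields $a \in L^{n/(2-\eta)}(B_{r_0})$ with integrability exponent strictly greater than $n/2$.

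Next, for each $1 \leq i \leq m$, interpret the nonnegative component $u_i$ as a classical solution of $\Delta u_i + a(x) u_i = 0$ in $B_1 \setminus \{0\}$, with $a \in L^{n/(2-\eta)}$ and $u_i(x) = o(|x|^{2-n})$ as $x \to 0$. Serrin's classical removable singularity theorem \cite[Theorem 1]{S} then applies to each $u_i$ and produces a bounded continuous extension to $B_1$. As this holds for every index $i$, the function $|\u|$ is bounded in a neighborhood of $0$, so the origin is a removable singularity for $\u$.

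In contrast to the removability proofs in the supercritical regimes, no monotonicity formula, blowup analysis, or auxiliary barrier construction is needed here; the subcritical constraint $\alpha < \frac{n}{n-2}$ supplies exactly the sub-$L^{n/2}$ integrability of $a$ required to trigger Serrin's theorem, so there is no substantial obstacle. The only mildly delicate point is the exponent bookkeeping that produces the choice of $\eta$, which is elementary.
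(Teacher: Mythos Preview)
Your argument is correct, but it differs from the paper's own proof of this lemma. The paper does \emph{not} invoke Serrin's theorem here; instead it runs an $L^p$ bootstrap: from $|\u(x)|=o(|x|^{2-n})$ one gets $\u\in L^q(B_1;\R^m)$ for every $q<\frac{n}{n-2}$, hence $|\u|^{\alpha-1}\u\in L^{q/\alpha}$ with $q/\alpha>1$, and then the Calder\'on--Zygmund estimate followed by Sobolev embedding iterates up to $\u\in W^{2,p}$ for all $p<\infty$, whence $\u\in C^{1,\gamma}$ across the origin.

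Your route is the one the paper uses for the \emph{other} removability lemmas (Lemma \ref{lemma:remv+sub} and Lemma \ref{lemma:remv0}), and you correctly observe that in the present subcritical range no auxiliary barrier is needed: the bound $(\alpha-1)(n-2)<2$ already places $a=|\u|^{\alpha-1}$ in $L^{n/(2-\eta)}$ directly, so Serrin's dichotomy applies immediately and the hypothesis $u_i=o(|x|^{2-n})$ kills the singular alternative. This is arguably the more natural and uniform argument across all the removability lemmas in the paper. The paper's bootstrap, on the other hand, is slightly more self-contained (it avoids citing Serrin's structural theorem) and yields a marginally stronger conclusion ($C^{1,\gamma}$ regularity rather than mere boundedness), though for the purposes of the paper boundedness of $|\u|$ is all that ``removable singularity'' means.
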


\begin{proof} Under the assumption \eqref{eq:remv-}, one has $\u\in L^q(B_1;\R^m)$ for any $1\leq q<\frac{n}{n-2}$. Since $1<\alpha<\frac{n}{n-2}$ and $|\Delta \u| \leq |\u|^\alpha$, we have $-\Delta \u \in L^{\frac{q}{\alpha}}(B_1;\R^m)$ for any $\alpha<q<\frac{n}{n-2}$. Thus, the $L^p$ theory \cite[Theorem 9.9]{GT} (applied to each component of $\u$) and a bootstrap argument based on the Sobolev inequality yields $\u \in W^{2,p} (B_1;\R^m)$ for any $1<p<\infty$. In particular, it follows from the Sobolev embedding that $\u\in C^{1,\gamma}(B_1;\R^m)$ for any $0<\gamma<1$, and thus $\u$ must have a removable singularity at the origin. 
\end{proof} 

We are in a position to prove Theorem \ref{theorem:main} (iii). 

\begin{proof}[Proof of Theorem \ref{theorem:main} (iii)] Suppose that $\u$ has a non-removable singularity at the origin. By Lemma \ref{lemma:remv-}, we know that $\u$ does not satisfy \eqref{eq:remv-}, or equivalently, there is some $\delta>0$, a component, say $u_1$, and a sequence $r_j\ra 0+$ such that
\begin{equation*}
\sup_{\partial B_{r_j}} u_1\geq \delta r_j^{2-n}. 
\end{equation*}
By the Harnack inequality \eqref{eq:u-har}, we know that
\begin{equation*}
\inf_{\partial B_{r_j}} u_1 \geq c_0\delta r_j^{2-n},
\end{equation*}
where $c_0>0$ depends only on $n$, $m$ and $\alpha$. Taking $\delta>0$ smaller, if necessary, such that $c\delta \leq \inf_{\partial B_{1/2}} u_1$, it follows from the maximum principle that 
\begin{equation*}
u_1(x) \geq c_0\delta |x|^{2-n}\quad\text{in }B_{1/2}\setminus\{0\},
\end{equation*} 
proving the asymptotic lower bound in \eqref{eq:asym-}. The asymptotic upper bound in \eqref{eq:asym-} is established in Lemma \ref{lemma:sup-}. Hence, the theorem is proved. 
\end{proof}

\begin{remark}\label{remark:asym-} As mentioned in the beginning of this section, the proof of Theorem \ref{theorem:main} (iii) can also be deduced by considering the function $w = u_1+u_2 + \cdots + u_m \geq 0$. Then $w$ satisfies $C_1w^\alpha \leq -\Delta w\leq C_2 w^\alpha$ in $B_1\setminus\{0\}$, where $C_1,C_2>0$ depend on $n$, $m$ and $\alpha$ only, and the claim in Theorem \ref{theorem:main} (iii) follows now from existent results in the literature, such as \cite[Theorem 2 and Remark 2]{L}. 
\end{remark}


\subsection{Case $\alpha = \frac{n}{n-2}$}\label{subsection:asym0}

The analysis of the lower critical exponent, $\alpha = \frac{n}{n-2}$, exhibits its own subtlety, due to the multiplicity of components in \eqref{eq:main}, as with the upper critical case, $\alpha = \frac{n+2}{n-2}$. To briefly discuss this point, let us first give the asymptotic upper bound. 

\begin{lemma}[Lemma 1 in \cite{A2}]\label{lemma:ub-sup-0} Let $\u$ be a nonnegative solution of \eqref{eq:main} with $\alpha = \frac{n}{n-2}$ in $B_1\setminus\{0\}$. Then for each $1\leq i\leq m$, 
\begin{equation}\label{eq:ub-sup-0}
\bar{u}_i(r) \leq \left(\frac{(n-2)^2}{2}\right)^{\frac{n-2}{2}} r^{2-n} (-\log r)^{\frac{2-n}{2}}\quad\text{as }r\ra 0,
\end{equation}
where $\bar{u}_i$ is the average of $u_i$ over the sphere $\partial B_r$. 
\end{lemma}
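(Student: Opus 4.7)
The plan is to transform to cylindrical coordinates as in \eqref{eq:cyl} and analyze the spherical averages. For $\alpha=\frac{n}{n-2}$ one has $\lambda=0$ and $\mu=n-2$ in \eqref{eq:lam-mu}, so the cylindrical system \eqref{eq:main-cyl} reads
$$\partial_t^2 v_i + (n-2)\partial_t v_i + \Delta_\theta v_i + |\v|^{\alpha-1}v_i = 0.$$
Setting $w_i(t):=\bar v_i(t)=r^{n-2}\bar u_i(r)$ with $t=-\log r$, I would integrate this equation over $\Ss^{n-1}$ (which annihilates the tangential Laplacian) and invoke Theorem \ref{theorem:asym-rad} in the form $v_i(t,\theta)=(1+O(e^{-t}))w_i(t)$ uniformly in $\theta$ to get
$$w_i''+(n-2)w_i'+(1+o(1))|\w(t)|^{\alpha-1}w_i=0 \quad\text{as }t\to\infty,$$
where $\w=(w_1,\ldots,w_m)$. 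Since $|\w|\ge w_i$ this upgrades to the scalar differential inequality
$$w_i''+(n-2)w_i'+(1-o(1))w_i^{\alpha}\le 0.$$

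One may assume $u_i\not\equiv 0$ (else the bound is trivial), in which case the strong maximum principle forces $w_i>0$. The \emph{a priori} bound \eqref{eq:u-sup} gives $w_i\le C$, and a short argument integrating the inequality above with integrating factor $e^{(n-2)t}$ shows $w_i(t)\to 0$ as $t\to\infty$ (a positive limit would force $w_i'$ to a negative constant, contradicting $w_i\ge 0$). The substitution $z_i(t):=w_i(t)^{1-\alpha}=w_i(t)^{-2/(n-2)}$ is therefore well-defined and tends to $\infty$. A direct computation converts the inequality for $w_i$ into
$$z_i''+(n-2)z_i' \ge \frac{2(1-o(1))}{n-2}+\frac{n}{2}\frac{(z_i')^2}{z_i}\ge\frac{2(1-o(1))}{n-2}.$$

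Multiplying by the integrating factor $e^{(n-2)t}$, integrating from a large $T$ to $t$ and dividing back yields $\liminf_{t\to\infty}z_i'(t)\ge 2/(n-2)^2$; a second integration gives $\liminf_{t\to\infty}z_i(t)/t\ge 2/(n-2)^2$. Inverting the substitution,
$$w_i(t)\le\Bigl(\tfrac{(n-2)^2}{2}\Bigr)^{(n-2)/2}t^{-(n-2)/2}(1+o(1)),$$
and rephrasing this via $r=e^{-t}$ and $\bar u_i(r)=r^{-(n-2)}w_i(-\log r)$ delivers the claimed bound with the sharp constant.

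The hard part is that the differential inequality $w_i''+(n-2)w_i'+w_i^\alpha\le 0$ has the wrong sign for a direct maximum-principle comparison against an explicit supersolution of shape $At^{-(n-2)/2}$, since the linearization of $w\mapsto w^\alpha$ enters with the wrong sign. The Fowler-type substitution $z_i=w_i^{-2/(n-2)}$ circumvents this by linearizing the leading-order balance: the desired upper bound on $w_i$ becomes a lower linear-growth bound on $z_i$, accessible by a single integration with the factor $e^{(n-2)t}$, and directly produces the sharp constant $((n-2)^2/2)^{(n-2)/2}$.
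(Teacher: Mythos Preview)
Your argument is correct and follows the paper's route: reduce to a scalar differential inequality for the spherical average and obtain the sharp bound of \cite[Lemma~1]{A2}, whose proof your Fowler substitution $z_i=w_i^{1-\alpha}$ essentially reconstructs. One simplification: Theorem~\ref{theorem:asym-rad} is unnecessary here, since $|\v|^{\alpha-1}v_i\ge v_i^\alpha$ pointwise together with Jensen's inequality already yields $w_i''+(n-2)w_i'+w_i^\alpha\le 0$ without any $(1-o(1))$ factor.
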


\begin{proof} Note that for each $1\leq i\leq m$, $\bar{u}_i$ satisfies, for $0<r<1$,  
\begin{equation*}
\dot{\bar{u}}_i + \frac{n-1}{r} \dot{\bar{u}}_i + \bar{u}_i^{\frac{n}{n-2}} = 0,
\end{equation*}
whence the conclusion follows directly from \cite[Lemma 1]{A2}. 
\end{proof}

Let us remark that the constant $(\frac{1}{2}(n-2)^2)^{\frac{n-2}{2}}$ in \eqref{eq:ub-sup-0} is exact in view of \eqref{eq:asym0}. Due to the fact that $\u$ consists of multiple components, there is not an easy way to prove that $|\bar\u|$ also satisfies \eqref{eq:ub-sup-0} with exactly the same constant. This prevents us from applying the argument in \cite[Section 2]{A2}, which deals with the scalar version of \eqref{eq:main} with $\alpha = \frac{n}{n-2}$. Instead, we mainly follow \cite[Section 3]{A2}, where a sign-changing problem is considered. The idea is to consider several refinements of the usual monotonicity formula $\Psi(t,\v)$ introduced  in \eqref{eq:Psi}. 

Due to the refined upper bound \eqref{eq:ub-sup-0}, we shall consider a new cylindrical transformation $\vphi$ defined so as to satisfy
\begin{equation}\label{eq:cyl-n}
\u(x) = |x|^{2-n} (-\log |x|)^{\frac{2-n}{2}} \vphi\left(-\log |x|,\frac{x}{|x|}\right). 
\end{equation}
Then the problem \eqref{eq:main} (with $\alpha = \frac{n}{n-2}$) can be reformulated in terms of $\vphi$ as
\begin{equation}\label{eq:main-cyl-n}
\partial_{tt} \vphi + (n-2) \left( 1 - \frac{1}{t} \right) \partial_t \vphi + \Delta_\theta \vphi = \frac{n-2}{2t} \left(n-2 -\frac{n}{2t}\right) \vphi - \frac{1}{t} |\vphi|^{\frac{2}{n-2}} \vphi.
\end{equation}

\begin{remark}\label{remark:cyl-n} Due to the asymptotic radial symmetry \eqref{eq:asym-rad} of $\u$, $\vphi$ satisfies $|\vphi - \bar\vphi| = O(e^{-\gamma t})$ as $t\ra\infty$, for some $\gamma >0$, where $\bar\vphi(t)$ is the average of $\vphi(t,\theta)$ over $\theta\in\Ss^{n-1}$. In particular, one has (by arguing similarly as in the derivation of \eqref{eq:Dut-Linf})
\begin{equation}\label{eq:w-tan-Linf}
|\nabla_\theta \vphi(t,\theta)| \leq C e^{-\gamma t}\quad\text{in }(t_0,\infty)\times\Ss^{n-1},
\end{equation}
for some large $t_0$ and $C$ independent of $t$. Moreover, it follows from the sharp estimate \eqref{eq:ub-sup-0} and the gradient estimate \eqref{eq:Du-Linf} that
\begin{equation}\label{eq:w-dt-Linf}
|\vphi(t,\theta)| + |\partial_t \vphi(t,\theta)| \leq C\quad\text{in }(t_0,\infty)\times\Ss^{n-1}. 
\end{equation}
\end{remark}

In comparison with \eqref{eq:main-cyl}, we obtain the first refinement of the monotonicity formula $\Psi(t,\v)$, given as 
\begin{equation}\label{eq:E}
\begin{split}
E(t,\vphi) &= \frac{1}{n\omega_n} \int_{\Ss^{n-1}} \left( t |\partial_t \vphi|^2 - t|\nabla_\theta \vphi|^2 + \frac{n-2}{n-1} |\vphi|^{\frac{2n-2}{n-2}} \right)\,d\theta \\
&\quad - \frac{n-2}{2n\omega_n} \left( n -2 - \frac{n}{2t} \right)\int_{\Ss^{n-1}} |\vphi|^2\,d\theta.
\end{split}
\end{equation}
Note that $E(t,\vphi)$ is well-defined for any $t$ whenever $\vphi(t,\cdot)$ is defined on $\Ss^{n-1}$, due to the smoothness of $\u$.

The next lemma is concerned with the monotonicity of $E(t,\vphi)$. 

\begin{lemma}\label{lemma:E-monot} Let $\u$ be a nonnegative solution of \eqref{eq:main} in $B_1\setminus\{0\}$ with $\alpha = \frac{n}{n-2}$, and $\vphi$ be the cylindrical transformation as in \eqref{eq:cyl-n}. Then
\begin{equation}\label{eq:E'}
\begin{split}
E'(t,\vphi) &= - \frac{(2n-4)t-2n+3}{n\omega_n} \int_{\Ss^{n-1}} |\partial_t \vphi|^2 \,d\theta \\
&\quad - \frac{1}{n\omega_n}\int_{\Ss^{n-1}} \left( |\nabla_\theta \vphi|^2 \,d\theta + \frac{n(n-2)}{4t^2}  |\vphi|^2 \right)\,d\theta. 
\end{split}
\end{equation}
In particular, $E(t,\vphi)$ is nonincreasing for $t>\frac{2n-3}{2n-4}$, and $E(\infty,\vphi)$ exists. 
\end{lemma}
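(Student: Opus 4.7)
The plan is to derive \eqref{eq:E'} by a direct differentiation of \eqref{eq:E} under the integral sign, using integration by parts on $\Ss^{n-1}$ and the PDE \eqref{eq:main-cyl-n} to eliminate the higher order terms; then deduce monotonicity and the existence of the limit from the bounds collected in Remark \ref{remark:cyl-n}.

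Differentiating \eqref{eq:E} term by term, one obtains
\begin{equation*}
\begin{split}
n\omega_n E'(t,\vphi) &= \int_{\Ss^{n-1}} |\partial_t \vphi|^2 \,d\theta + 2t\int_{\Ss^{n-1}} \partial_t\vphi \cdot \partial_{tt}\vphi \,d\theta - \int_{\Ss^{n-1}}|\nabla_\theta \vphi|^2\,d\theta \\
&\quad - 2t\int_{\Ss^{n-1}} \nabla_\theta \vphi : \nabla_\theta \partial_t \vphi \,d\theta + 2\int_{\Ss^{n-1}} |\vphi|^{\frac{2}{n-2}} \vphi \cdot \partial_t\vphi\,d\theta \\
&\quad - \frac{n(n-2)}{4t^2} \int_{\Ss^{n-1}} |\vphi|^2 \,d\theta - (n-2)\left(n-2 - \frac{n}{2t}\right)\int_{\Ss^{n-1}}\vphi\cdot\partial_t\vphi\,d\theta.
\end{split}
\end{equation*}
The next step is to integrate by parts on the closed manifold $\Ss^{n-1}$, which converts the fourth integral into $2t\int_{\Ss^{n-1}}\Delta_\theta \vphi \cdot \partial_t\vphi\,d\theta$, and then to use \eqref{eq:main-cyl-n} to replace $\partial_{tt}\vphi + \Delta_\theta\vphi$ inside the second and fourth integrals by the lower order quantity
\begin{equation*}
-(n-2)\left(1 - \tfrac{1}{t}\right)\partial_t\vphi + \frac{n-2}{2t}\left(n-2 - \frac{n}{2t}\right)\vphi - \frac{1}{t}|\vphi|^{\frac{2}{n-2}}\vphi.
\end{equation*}
After multiplying by $2t\,\partial_t\vphi$ and integrating, the $|\vphi|^{\frac{2}{n-2}}\vphi\cdot\partial_t\vphi$ contribution exactly cancels the fifth integral above, while the $\vphi\cdot\partial_t\vphi$ contribution exactly cancels the last integral. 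What remains is the term
\begin{equation*}
\bigl(1 - 2(n-2)(t-1)\bigr)\int_{\Ss^{n-1}} |\partial_t\vphi|^2\,d\theta = -\bigl((2n-4)t - (2n-3)\bigr) \int_{\Ss^{n-1}}|\partial_t\vphi|^2\,d\theta,
\end{equation*}
together with the surviving $-|\nabla_\theta\vphi|^2$ and $-\frac{n(n-2)}{4t^2}|\vphi|^2$ terms, yielding exactly \eqref{eq:E'}.

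Monotonicity of $E(t,\vphi)$ on $\{t > \frac{2n-3}{2n-4}\}$ is then immediate from the sign of the three coefficients on the right side of \eqref{eq:E'}. Finally, to conclude that $E(\infty,\vphi)$ exists, it suffices to produce a lower bound for large $t$, since the function is eventually nonincreasing. Discarding the nonnegative term $t|\partial_t\vphi|^2$ in \eqref{eq:E} and appealing to the uniform bound $|\vphi(t,\theta)| \leq C$ from \eqref{eq:w-dt-Linf} and the exponential decay $|\nabla_\theta\vphi(t,\theta)| \leq Ce^{-\gamma t}$ from \eqref{eq:w-tan-Linf}, all remaining terms in \eqref{eq:E} are bounded below uniformly in $t$.

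I do not anticipate a serious obstacle: this is a routine monotonicity computation, and the only point requiring mild care is tracking the two cancellations (of the $|\vphi|^{\frac{2}{n-2}}\vphi\cdot\partial_t\vphi$ terms and of the $\vphi\cdot\partial_t\vphi$ terms), which is what allows the final formula to reduce cleanly to \eqref{eq:E'}.
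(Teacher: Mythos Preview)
Your proof is correct and follows essentially the same approach as the paper: the paper phrases the computation as ``taking the inner product of \eqref{eq:main-cyl-n} with $t\partial_t\vphi$ and integrating over $\Ss^{n-1}$,'' which is equivalent to your term-by-term differentiation of $E$ followed by substitution of the PDE. Your argument for the existence of $E(\infty,\vphi)$ via the lower bound from \eqref{eq:w-tan-Linf} and \eqref{eq:w-dt-Linf} also matches the paper's.
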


\begin{proof} The proof of \eqref{eq:E'} follows easily from taking the inner product of \eqref{eq:main-cyl-n} with $t\partial_t\vphi$ and integrating the both sides over $\Ss^{n-1}$. We omit the details. 

With \eqref{eq:E'} at hand, we know that $E(t,\vphi)$ is nonincreasing for $t>\frac{2n-3}{2n-4}$. Thus, the existence of $E(\infty,\vphi)$ follows immediately from that $E(t,\vphi)$ is uniformly bounded from below as $t\ra\infty$. However, \eqref{eq:w-tan-Linf} yields 
\begin{equation*}
\lim_{t\ra\infty}\int_{\Ss^{n-1}} t|\nabla_\theta\vphi|^2\,d\theta = 0,
\end{equation*}
which along with \eqref{eq:w-dt-Linf} ensures that 
\begin{equation*}
\liminf_{t\ra\infty} E(t,\vphi) > -\infty,  
\end{equation*}
as desired. 
\end{proof}

In order to have the full strength of the existence of $E(\infty,\vphi)$, we shall prove the following, which is the system version of \cite[Lemma 3.2]{A2}. Although the proof is almost identical, we shall present the argument for the sake of completeness. 

\begin{lemma}[Essentially due to \cite{A2}]\label{lemma:w-dt-int} Let $\vphi$ be as in Lemma \ref{lemma:E-monot}. Then 
\begin{equation}\label{eq:w-dt-int}
\lim_{t\ra\infty} \int_{\Ss^{n-1}} t|\partial_t \vphi|^2 \,d\theta = 0. 
\end{equation}
\end{lemma}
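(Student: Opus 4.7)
The plan is to combine the monotonicity identity \eqref{eq:E'} with a direct ODE analysis of $G(t) := t P(t)$, where $P(t) := \int_{\Ss^{n-1}} |\partial_t \vphi|^2 \, d\theta$, essentially mimicking the scheme of Aviles in \cite{A2}.

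First I would note that, since $E(\infty,\vphi)$ exists by Lemma \ref{lemma:E-monot} and every integrand on the right of \eqref{eq:E'} is nonnegative for $t > (2n-3)/(2n-4)$, integration from a large $t_0$ to $\infty$ yields
\[
\int_{t_0}^\infty t\, P(t) \, dt + \int_{t_0}^\infty R(t) \, dt < \infty, \quad \text{where } R(t) := \int_{\Ss^{n-1}} |\nabla_\theta \vphi|^2 \, d\theta.
\]
Starting from \eqref{eq:w-dt-Linf} and applying a standard interior bootstrap to \eqref{eq:main-cyl-n}, one obtains uniform bounds on $|\partial_{tt}\vphi|$ and $|\nabla_\theta^2 \vphi|$, so that $|P'(t)| \leq C$. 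Together with $P \in L^1((t_0,\infty))$, this forces $P(t) \to 0$.

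The heart of the argument is the derivation of an ODE for $G$. Taking the inner product of \eqref{eq:main-cyl-n} with $2t\partial_t\vphi$ and integrating over $\Ss^{n-1}$, and using the identities $\int \vphi \cdot \partial_t\vphi \, d\theta = \tfrac12 A'(t)$ and $\int |\vphi|^{\alpha-1}\vphi \cdot \partial_t\vphi \, d\theta = \tfrac{1}{\alpha+1} B'(t)$ (with $A(t) := \int|\vphi|^2 d\theta$, $B(t) := \int|\vphi|^{\alpha+1} d\theta$), together with the integration by parts $-2t\int\Delta_\theta\vphi\cdot\partial_t\vphi\,d\theta = tR'(t)$, one obtains after collecting terms
\[
G'(t) + 2(n-2) G(t) = (2n-3) P(t) + t R'(t) + \frac{n-2}{2}\Bigl(n - 2 - \frac{n}{2t}\Bigr) A'(t) - \frac{2}{\alpha+1} B'(t).
\]
Each term on the right tends to zero as $t \to \infty$: $P(t) \to 0$ by the previous step; by Cauchy--Schwarz and the uniform bound on $|\vphi|$ one has $|A'(t)|, |B'(t)| \leq C\sqrt{P(t)} \to 0$; and upgrading the tangential decay \eqref{eq:w-tan-Linf} to $|\nabla_\theta^2 \vphi| \leq Ce^{-\gamma t}$ (by differentiating \eqref{eq:main-cyl-n} in $\theta$ and invoking interior elliptic regularity) gives $|R'(t)| \leq Ce^{-2\gamma t}$, whence $tR'(t) \to 0$.

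Finally, writing the right-hand side of the ODE as $f(t)$, the Duhamel representation
\[
G(t) = e^{-2(n-2)(t-t_0)} G(t_0) + \int_{t_0}^t e^{-2(n-2)(t-s)} f(s) \, ds
\]
shows $G(t) \to 0$ (the first term decays exponentially, and the convolution of an exponentially decaying kernel with a function tending to zero itself tends to zero), which is exactly \eqref{eq:w-dt-int}. The principal obstacle is the exponential decay of the tangential second derivatives needed to control $tR'(t)$; this is a technical but routine bootstrap starting from the asymptotic radial symmetry \eqref{eq:asym-rad} recast via \eqref{eq:cyl-n}.
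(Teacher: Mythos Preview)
Your argument is correct and is in fact more elementary than the paper's. Both proofs begin identically, integrating \eqref{eq:E'} to obtain $\int_{t_0}^\infty t\int_{\Ss^{n-1}}|\partial_t\vphi|^2\,d\theta\,dt<\infty$. From that point the paper follows Aviles' original scheme: it differentiates \eqref{eq:main-cyl-n} in $t$, builds a second monotone functional $J(t,\partial_t\vphi)$ analogous to $E$, checks its well-definedness (which requires separate estimates such as \eqref{eq:w-dt-int3}), and from the finiteness of $J(\infty,\partial_t\vphi)$ extracts $\int_{t_0}^\infty t\int_{\Ss^{n-1}}|\partial_{tt}\vphi|^2\,d\theta\,dt<\infty$; this in turn makes $\frac{d}{dt}\bigl(t\int|\partial_t\vphi|^2\bigr)$ integrable, so $t\int|\partial_t\vphi|^2$ is Cauchy and hence tends to zero.

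Your route bypasses the second functional entirely. You exploit instead the crucial structural fact that taking the inner product of \eqref{eq:main-cyl-n} with $2t\partial_t\vphi$ produces a first-order ODE for $G(t)=t\int|\partial_t\vphi|^2$ with a \emph{strictly positive} damping coefficient $2(n-2)$; once the forcing $f(t)$ is shown to vanish at infinity (which only needs $P(t)\to 0$, $|A'|+|B'|\leq C\sqrt{P}$, and the exponential tangential decay from \eqref{eq:w-tan-Linf} together with a one-step bootstrap to control $\nabla_\theta\partial_t\vphi$), Duhamel finishes the job immediately. This is shorter and avoids the somewhat delicate verification that $J$ is well-defined. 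The paper's approach, on the other hand, is more systematic---it is the natural iteration of the energy method and would extend more readily if one needed higher-order information on $\partial_t^k\vphi$.
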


\begin{proof} By \eqref{eq:w-tan-Linf} and \eqref{eq:w-dt-Linf}, one may integrate the both sides of \eqref{eq:E'} from $t_0 = \frac{2n-3}{2n-4}$ to $\infty$, and use the existence of $E(\infty,\phi)$ to deduce that
\begin{equation}\label{eq:w-dt-int1}
\int_{t_0}^\infty \int_{\Ss^{n-1}} \tau |\partial_\tau \vphi|^2 \,d\theta \,d\tau < \infty. 
\end{equation}
Hence, it is sufficient to prove that $\int_{\Ss^{n-1}} t|\partial_t\vphi|^2\,d\theta$ is a Cauchy sequence in $t\ra \infty$.

In order to do so, we differentiate \eqref{eq:main-cyl-n} in $t$ and find that $\vpsi = \partial_t\vphi$ solves 
\begin{equation}\label{eq:psi-pde}
\begin{split}
&\partial_{tt} \vpsi + (n-2)\left(1-\frac{1}{t}\right) \partial_t\vpsi - \frac{n-2}{2t} \left(n - 2 - \frac{n+4}{2t} \right) \vpsi + \Delta_\theta \vpsi\\
&= -\frac{n-2}{2t^2} \left( n -2 - \frac{n}{t}\right)\vphi + \frac{1}{t} |\vphi|^{\frac{2}{n-2}}\left( \frac{1}{t} \vphi - \frac{2}{n-2} \frac{\vphi\cdot \vpsi}{|\vphi|^2}\vphi  - \vpsi \right).
\end{split}
\end{equation}
Taking the inner product of \eqref{eq:psi-pde} with $t\partial_t\vpsi$ and integrating over $\Ss^{n-1}$, one may verify after some computation that the functional 
\begin{equation}\label{eq:J}
\begin{split}
J(t,\vpsi) &= \frac{1}{n\omega_n}\int_{\Ss^{n-1}} \left(t|\partial_t\vpsi|^2 - t|\nabla_\theta \vpsi|^2 - \frac{n-2}{2} \left( n-2 - \frac{n+4}{2t}\right) |\vpsi|^2\right) \, d\theta \\
&\quad  - \frac{1}{n\omega_n}\int_t^\infty \int_{\Ss^{n-1}} \frac{n-2}{\tau}\left(n-2 - \frac{n}{\tau}\right) \vphi\cdot \partial_\tau\vpsi \, d\theta\,d\tau \\
&\quad  + \frac{1}{n\omega_n} \int_t^\infty \int_{\Ss^{n-1}} |\vphi|^{\frac{2}{n-2}} \left( \frac{1}{\tau}\vphi- \frac{2}{n-2} \frac{\vphi\cdot\vpsi}{|\vphi|^2} \vphi - \vpsi \right)\cdot \partial_\tau\vpsi\,d\theta\,d\tau
\end{split}
\end{equation}
satisfies 
\begin{equation}\label{eq:J'}
\begin{split}
J'(t,\vpsi) &=-\frac{(2n-4)t - 2n+3}{n\omega_n} \int_{\Ss^{n-1}} |\partial_t\vpsi|^2\,d\theta \\
&\quad - \frac{1}{n\omega_n} \int_{\Ss^{n-1}} \left( |\nabla_\theta \vpsi|^2 + \frac{(n+4)(n-2)}{t^2} \int_{\Ss^{n-1}} |\vpsi|^2 \right) \,d\theta,
\end{split}
\end{equation}
provided that the last two double integrals in \eqref{eq:J} are finite, i.e., $J(t,\vpsi)$ is well-defined for all $t$ large. 

Assuming for the moment that $J(t,\psi)$ is well-defined for all $t$ large, one may proceed as in the proof of \cite[Lemma 3.2]{A2}. Note that \eqref{eq:J'} implies the monotonicity of $J(t,\vpsi)$ for $t\geq t_0 = \frac{2n-3}{2n-4}$. Analogously with Remark \ref{remark:cyl-n}, the asymptotic radial symmetry \eqref{eq:asym-rad} implies exponential decay of $|\nabla_\theta \vpsi|$ as well as the uniform boundedness of $|\vpsi|$ and $|\partial_t\vpsi|$. Hence, one may deduce as in the proof of Lemma \ref{lemma:E-monot} that $J(t,\vpsi)$ is uniformly bounded from below as $t\ra\infty$. As $J(t,\vpsi)$ being nonincreasing in $t\geq t_0$, $J(\infty,\vpsi)$ exists, and thus, integrating \eqref{eq:J} from $t_0$ to $\infty$ yields that
\begin{equation}\label{eq:w-dt-int2}
\int_{t_0}^\infty \int_{\Ss^{n-1}} \tau |\partial_\tau \vpsi|^2 \,d\theta \,d\tau < \infty. 
\end{equation}
Noting that 
\begin{equation*}
\left|\frac{d}{dt} \left( t \int_{\Ss^{n-1}} |\partial_t\vphi|^2\,d\theta\right)\right| \leq \int_{\Ss^{n-1}} (|\partial_t\vphi|^2 + t |\partial_t \vphi|^2 + t|\partial_{tt}\vphi|^2) \,d\theta,
\end{equation*}
we conclude from \eqref{eq:w-dt-int1} and \eqref{eq:w-dt-int2} that $t \int_{\Ss^{n-1}} |\partial_t\vphi|^2\,d\theta$ is a Cauchy sequence in $t\ra\infty$. Thus, \eqref{eq:w-dt-int} follows from \eqref{eq:w-dt-int1}. 

To this end, we are only left with verifying the well-definedness of $J(t,\vpsi)$ for all $t\geq t_0$ with some $t_0$ large. As noted above, this boils down to proving that the last two double integrals in \eqref{eq:J} are finite. Due to the upper estimate \eqref{eq:ub-sup-0} and \eqref{eq:w-dt-int1}, it suffices to show that
\begin{equation}\label{eq:w-dt-int3}
\int_{t_0}^\infty \frac{1}{t} \int_{\Ss^{n-1}} (|\vphi| + |\vpsi|) |\partial_t \vpsi| \,d\theta\,dt < \infty.
\end{equation}

Owing to \eqref{eq:w-tan-Linf} and \eqref{eq:w-dt-Linf}, we have, in \eqref{eq:main-cyl-n} (recall that $\vpsi = \partial_t\vphi$), 
\begin{equation}\label{eq:main-cyl-n-re}
|\partial_t\vpsi|=  (n-2)|\vpsi| + O\left(\frac{1}{t}\right), 
\end{equation}
so multiplying \eqref{eq:main-cyl-n-re} by $\frac{1}{t} |\vphi|$ yields 
\begin{equation}\label{eq:w-dt-int4}
\begin{split}
\int_{t_0}^\infty \frac{1}{t} \int_{\Ss^{n-1}} |\vphi||\partial_t \vpsi|\,d\theta \,dt &\leq (n-2)\int_{t_0}^\infty \frac{1}{t} \int_{\Ss^{n-1}} |\vphi| |\vpsi| \,d\theta \,dt + O(1) \\
& \leq \frac{n-2}{2} \int_{t_0}^\infty \int_{\Ss^{n-1}} |\vpsi|^2\,d\theta\,dt + O(1) \\
& < \infty, 
\end{split}
\end{equation}
where the second inequality follows from $|\vphi||\vpsi| \leq \frac{1}{2t}|\vphi|^2 + \frac{t}{2} |\vpsi|^2$, while the last inequality is derived from \eqref{eq:w-dt-int1}.  On the other hand, multiplying \eqref{eq:main-cyl-n-re} by $\frac{1}{t} |\vpsi|$, we deduce from \eqref{eq:w-dt-int1} that
\begin{equation}\label{eq:w-dt-int5}
\begin{split}
\int_{t_0}^\infty \frac{1}{t} \int_{\Ss^{n-1}} |\vpsi| |\partial_t \vpsi|\,d\theta\,dt \leq (n-2) \int_{t_0}^\infty \frac{1}{t} \int_{\Ss^{n-1}} |\vpsi|^2\,d\theta\,dt < \infty. 
\end{split}
\end{equation}
The claim \eqref{eq:w-dt-int3} follows readily from \eqref{eq:w-dt-int4} and \eqref{eq:w-dt-int5}. The  proof is finished. 
\end{proof}

Finally we have the classification of the blowup limit via the limiting energy levels $E(\infty,\vphi)$. 

\begin{lemma}\label{lemma:class0} Let $\u$ be a nonnegative solution of \eqref{eq:main} in $B_1\setminus\{0\}$ with $\alpha=\frac{n}{n-2}$, and $\vphi$ be its cylindrical transform as in \eqref{eq:cyl-n}. Also let $E(t,\vphi)$ be  as in \eqref{eq:E}. Then $E(\infty,\vphi)\in \{-\frac{1}{n-1}(\frac{(n-2)^2}{2})^{n-1},0\}$. Moreover, the following are true.
\begin{enumerate}[(i)]
\item $E(\infty,\vphi) = 0$ if and only if
\begin{equation}\label{eq:class0-reg}
|\u(x)| = o\left(|x|^{2-n} (-\log |x|)^{\frac{2-n}{2}}\right)\quad\text{as }x\ra 0.
\end{equation}
\item $E(\infty,\vphi) = -\frac{1}{n-1}(\frac{(n-2)^2}{2})^{n-1}$ if and only if
\begin{equation}\label{eq:class0-sing}
|\u(x)| = (1+o(1)) \left( \frac{(n-2)^2}{2} \right)^{\frac{n-2}{2}}|x|^{2-n} (-\log |x|)^{\frac{2-n}{2}}. 
\end{equation}
\end{enumerate}
\end{lemma}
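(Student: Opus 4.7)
The plan has two stages. In Stage 1, I would reduce $E(t,\vphi)$ asymptotically to a scalar function of $|\bar\vphi(t)|$ alone, using the decay estimates already in hand. In Stage 2, I would derive an ODE for $F(t):=|\bar\vphi(t)|^2$ and show that $|\bar\vphi(t)|$ must converge to either $0$ or the equilibrium $s_*:=\bigl(\tfrac{(n-2)^2}{2}\bigr)^{\frac{n-2}{2}}$; evaluating the reduced scalar at these two values then produces the two admissible energy levels.

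For Stage 1, Lemma \ref{lemma:w-dt-int} gives $t\int_{\Ss^{n-1}}|\partial_t\vphi|^2\,d\theta\to 0$; the exponential bound $|\nabla_\theta\vphi|\leq Ce^{-\gamma t}$ from Remark \ref{remark:cyl-n} kills the tangential term in \eqref{eq:E} after multiplication by $t$; and the uniform estimate $|\vphi(t,\theta)-\bar\vphi(t)|=O(e^{-\gamma t})$ from the same remark, together with the boundedness \eqref{eq:w-dt-Linf}, lets me replace $|\vphi|^p$ by $|\bar\vphi|^p$ up to $O(e^{-\gamma t})$ in the remaining integrals. Substituting into \eqref{eq:E} yields
\begin{equation*}
E(t,\vphi)=h(|\bar\vphi(t)|)+o(1),\qquad h(s):=\tfrac{n-2}{n-1}s^{\frac{2n-2}{n-2}}-\tfrac{(n-2)^2}{2}s^2,
\end{equation*}
and $h'(s)=s\bigl(2s^{\frac{2}{n-2}}-(n-2)^2\bigr)$ identifies the unique positive critical point $s_*$ with $h(s_*)=-\tfrac{1}{n-1}\bigl(\tfrac{(n-2)^2}{2}\bigr)^{n-1}$, matching the lemma.

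The main obstacle is Stage 2, since knowing only $h(|\bar\vphi(t)|)\to E(\infty,\vphi)$ leaves open the possibilities that $|\bar\vphi(t)|$ oscillates or converges to a value in the level set $\{h=E(\infty,\vphi)\}$ other than $\{0,s_*\}$. I would average \eqref{eq:main-cyl-n} over $\Ss^{n-1}$ (killing the $\Delta_\theta$ term), use that the spherical average of $|\vphi|^{\frac{2}{n-2}}\vphi$ equals $|\bar\vphi|^{\frac{2}{n-2}}\bar\vphi+O(e^{-\gamma t})$ by the asymptotic radial symmetry, take the inner product with $2\bar\vphi$, and absorb $|\partial_t\bar\vphi|^2=o(1/t)$ (via Cauchy--Schwarz applied to Lemma \ref{lemma:w-dt-int}) and $O(1/t^2)$ corrections to obtain
\begin{equation*}
F''+(n-2)\bigl(1-\tfrac{1}{t}\bigr)F'=\tfrac{(n-2)^2}{t}F-\tfrac{2}{t}F^{\frac{n-1}{n-2}}+o\bigl(\tfrac{1}{t}\bigr).
\end{equation*}
At any sequence of local minima $t_n\to\infty$ with $F(t_n)\to a$, the conditions $F'(t_n)=0$ and $F''(t_n)\geq 0$ applied to the ODE force $a\leq s_*^2$, and symmetrically $b\geq s_*^2$ at local maxima with limit $b$; the intermediate value theorem together with convergence of $h(\sqrt F)$ then rules out persistent oscillation and yields $F(t)\to\ell^2$ for some $\ell\geq 0$.

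Finally, to pin down $\ell\in\{0,s_*\}$, I would set $F=\ell^2+G$ with $G\to 0$ and linearize the ODE to $G''+(n-2)G'=c/t+o(1/t)$, where $c:=(n-2)^2\ell^2-2\ell^{\frac{2n-2}{n-2}}$. The particular solution to $y''+(n-2)y'=c/t$ behaves like a nonzero multiple of $\log t$ unless $c=0$, so boundedness of $G$ forces $c=0$, which gives $\ell=0$ or $\ell^{\frac{2}{n-2}}=\tfrac{(n-2)^2}{2}$, i.e., $\ell=s_*$. Translating back via the cylindrical transform \eqref{eq:cyl-n}, the uniform bound $|\vphi(t,\theta)|=|\bar\vphi(t)|+O(e^{-\gamma t})$ converts $|\bar\vphi|\to 0$ into \eqref{eq:class0-reg} and $|\bar\vphi|\to s_*$ into \eqref{eq:class0-sing}; the converse implications come from reading the same chain of identifications backwards.
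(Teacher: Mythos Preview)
Your argument is correct and Stage~1 coincides with the paper's. In Stage~2 the paper takes a shorter route in two places. For the convergence of $|\bar\vphi(t)|$, the paper simply notes that the level set $\{s\geq 0: h(s)=E(\infty,\vphi)\}$ is finite while the set of subsequential limits of the continuous, bounded function $|\bar\vphi|$ is connected; hence the limit exists. Your local max/min analysis of the $F$--ODE is valid but superfluous for this step, since the conclusion already follows from the convergence of $h(\sqrt F)$ plus the intermediate value theorem alone. For pinning down $\ell\in\{0,s_*\}$, instead of deriving and linearizing an ODE for $F$, the paper takes the inner product of \eqref{eq:main-cyl-n} with $\vphi$ itself and integrates over $(t_0,\infty)\times\Ss^{n-1}$; the estimates \eqref{eq:w-tan-Linf}, \eqref{eq:w-dt-Linf}, \eqref{eq:w-dt-int1} make every term finite except possibly
\[
\int_{t_0}^\infty \frac{1}{\tau}\int_{\Ss^{n-1}}\Bigl(\tfrac{(n-2)^2}{2}-|\vphi|^{\frac{2}{n-2}}\Bigr)|\vphi|^2\,d\theta\,d\tau,
\]
so this integral is finite, and the divergence of $\int d\tau/\tau$ forces the integrand to vanish in the limit, i.e., $|\a|=0$ or $|\a|^{\frac{2}{n-2}}=\tfrac{(n-2)^2}{2}$. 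This is morally the same mechanism as your log-growth argument (integrating $G''+(n-2)G'=c/t+o(1/t)$ once reproduces exactly this divergence), but it bypasses the passage to the averaged ODE and the linearization, making the proof a few lines shorter.
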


\begin{proof} Due to Lemma \ref{lemma:E-monot}, \eqref{eq:w-tan-Linf} and \eqref{eq:w-dt-int}, we have
\begin{equation}\label{eq:E-inf1}
E(\infty,\vphi) = \frac{1}{n\omega_n} \lim_{t\ra\infty} \int_{\Ss^{n-1}} \left( \frac{n-2}{n-1} |\vphi|^{\frac{2n-2}{n-2}} - \frac{(n-2)^2}{2} |\vphi|^2\right) \,d\theta. 
\end{equation}
In fact, \eqref{eq:w-tan-Linf} implies that whenever $\vphi(t_j,\theta)$ converges as $t_j\ra\infty$, the limit is independent of $\theta\in\Ss^{n-1}$. Hence, along a convergent sequence $\vphi(t_j,\theta) \ra \a$ (uniformly over $\theta\in\Ss^{n-1}$), we obtain from \eqref{eq:E-inf1} that
\begin{equation}\label{eq:E-inf2}
E(\infty,\vphi) = \frac{n-2}{n-1} |\a|^{\frac{2n-2}{n-2}} - \frac{(n-2)^2}{2} |\a|^2. 
\end{equation}
Since the right hand side has at most three nonnegative roots, we conclude that the limit value $|\a|$ (under the uniform convergence of $|\vphi(t,\theta)|$ on $\Ss^{n-1}$ as $t\ra\infty$) is unique. 

To compute the limit value $|\a|$, let us take the inner product of \eqref{eq:main-cyl-n} with $\vphi$ and integrate the both sides over $(t_0,\infty)\times\Ss^{n-1}$ (with $t_0$ large). Then one may easily deduce from \eqref{eq:w-tan-Linf}, \eqref{eq:w-dt-Linf} and \eqref{eq:w-dt-int1} that 
\begin{equation*}
\begin{split}
&\left| \int_{t_0}^\infty \frac{1}{n\omega_n \tau} \int_{\Ss^{n-1}} \left(\frac{(n-2)^2}{2}  - |\vphi|^{\frac{2}{n-2}}\right)|\vphi|^2 \,d\theta\,dt \right| <\infty. 
\end{split}
\end{equation*}
Now that $|\vphi|$ converges to $|\a|$ as $t\ra\infty$ uniformly on $\Ss^{n-1}$, we must have either $|\a| = 0$ or $|\a| = (\frac{(n-2)^2}{2})^{\frac{n-2}{2}}$. Inserting this into \eqref{eq:E-inf2}, we deduce that either $E(\infty,\vphi) = 0$ if and only if $|\a| = 0$, or $E(\infty,\vphi) = -\frac{1}{n-1} (\frac{(n-2)^2}{2})^{n-1}$. Obviously, the assertions \eqref{eq:class0-reg} and \eqref{eq:class0-sing} follow immediately via inverse cylindrical transform \eqref{eq:cyl-n}.
\end{proof}

We are only left with proving that \eqref{eq:class0-reg} yields the removability of the singularity at the origin. 

\begin{lemma}\label{lemma:remv0} Let $\u$ be a nonnegative solution of \eqref{eq:main} in $B_1\setminus\{0\}$ with $\alpha = \frac{n}{n-2}$. Suppose further that $\u$ satisfies 
\begin{equation}\label{eq:remv0}
|\u(x)| = o(|x|^{n-2}(-\log |x|)^{\frac{n-2}{2}} )\quad\text{as }x\ra 0.
\end{equation}
Then the origin is a removable singularity. 
\end{lemma}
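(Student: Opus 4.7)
The proof follows the same architecture as Lemma \ref{lemma:remv+sub} and Lemma \ref{lemma:remv+crit}, reduced to an auxiliary supersolution argument for the scalar equation $-\Delta u_i = a(x) u_i$, where $a(x) := |\u(x)|^{\alpha-1} = |\u(x)|^{2/(n-2)}$. The hypothesis translates to $a(x) = o(|x|^{-2}(-\log|x|)^{-1})$ as $x \to 0$. The essential new feature in the lower-critical case $\alpha = \frac{n}{n-2}$ is that $\lambda = 0$, so the purely power-type auxiliary function used for Lemma \ref{lemma:remv+sub} loses its coercive lower-order term, and the logarithmic refinement of the hypothesis must be built into the supersolution explicitly.

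Concretely, I would take
\[
\vp_\e(x) = A |x|^{2-n+\delta} + \e |x|^{2-n}(-\log|x|)^{-(n-2)/2},
\]
with a small fixed $\delta > 0$, an arbitrary $\e > 0$, and $A > 0$ to be chosen depending on $u_i$. Using the identity $-\Delta(|x|^{2-n}(-\log|x|)^{-p}) = (n-2)p|x|^{-n}(-\log|x|)^{-p-1}(1 + o(1))$ with $p = (n-2)/2$, a direct computation yields
\[
-\Delta \vp_\e = A(n-2-\delta)\delta|x|^{-n+\delta} + \tfrac{(n-2)^2}{2}\e|x|^{-n}(-\log|x|)^{-n/2}(1+o(1)).
\]
Matching this term by term with $a(x)\vp_\e$, the supersolution inequality $-\Delta \vp_\e \geq a(x)\vp_\e$ holds in $B_{r_0}\setminus\{0\}$ for sufficiently small $r_0>0$: the bound $a(x) \leq (n-2-\delta)\delta|x|^{-2}$ controls the power-type piece, while $a(x) \leq \frac{(n-2)^2}{2}|x|^{-2}(-\log|x|)^{-1}$ controls the logarithmic piece, both of which follow from the hypothesis near the origin.

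The comparison is closed by choosing $A$ large so that $u_i \leq \vp_\e$ on $\partial B_{r_0}$, while the hypothesis $|\u(x)| = o(|x|^{2-n}(-\log|x|)^{(2-n)/2})$ guarantees $u_i \leq \e |x|^{2-n}(-\log|x|)^{-(n-2)/2} \leq \vp_\e$ on $\partial B_r$ for all sufficiently small $r$. The maximum principle (most cleanly via the ratio $u_i/\vp_\e$, which is a subsolution of a linear elliptic equation without zeroth-order term) then yields $u_i \leq \vp_\e$ in $B_{r_0}\setminus \bar{B}_r$. Sending first $r \to 0$ and then $\e \to 0$ produces
\[
u_i(x) \leq A |x|^{2-n+\delta} \quad \text{in } B_{r_0}\setminus\{0\}.
\]
Consequently $a(x) \leq C|x|^{-2 + 2\delta/(n-2)}$, which places $a$ in $L^p(B_{r_0})$ for some $p > n/2$. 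Applying the classical removable singularity theorem of Serrin \cite{S} component-wise then shows that each $u_i$ extends continuously across the origin, which concludes the proof.

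I expect the main technical obstacle to be the expansion of $-\Delta(|x|^{2-n}(-\log|x|)^{-p})$ and the careful verification of the supersolution inequality at the correct asymptotic scale. Crucially, one must exploit the full logarithmic factor in the hypothesis—not merely the weaker $a(x) = o(|x|^{-2})$—to control the logarithmic piece of $\vp_\e$; this is precisely what compensates for the absence of a coercive $\lambda$ at the lower-critical exponent, and it is the point where the argument diverges from the proof of Lemma \ref{lemma:remv+sub}.
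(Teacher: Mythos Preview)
Your proof is correct and follows essentially the same route as the paper: the same auxiliary barrier $\vp_\e(x) = A|x|^{2-n+\delta} + \e|x|^{2-n}(-\log|x|)^{-(n-2)/2}$, the same comparison argument on annuli $B_{r_0}\setminus \bar B_r$ followed by $\e\to 0$, and the same appeal to Serrin's removable singularity theorem once $a = |\u|^{2/(n-2)} \in L^p$ for some $p>n/2$. Your term-by-term verification of the supersolution inequality is in fact slightly more transparent than the paper's compressed assertion that $\Delta\vp_\e \leq \frac{C_1}{|x|^2\log|x|}\vp_\e$, but the underlying computation is identical.
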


\begin{proof} Under the assumption \eqref{eq:remv0}, we claim that 
\begin{equation}\label{eq:remv-re0}
|\u(x)| \leq c|x|^{2-n + \delta}\quad\text{in }B_{r_0}\setminus\{0\},
\end{equation}
for some small $\delta>0$,  where $c>1$ and $r_0>0$ may depend on $\u$. 

Consider the auxiliary function
\begin{equation*}
\vp_\e (x) = \left(C r_0^{-\delta} |x|^\delta + \e (-\log |x|)^{\frac{2-n}{2}} \right)|x|^{2-n}\quad\text{in }B_{r_0}\setminus\{0\},
\end{equation*}
where $C_0>0$ is the (universal) constant chosen from \eqref{eq:u-sup-0},  $r_0>0$ is a small radius to be determined later and $\e>0$ is an arbitrary small number. After some computations, one may verify that 
\begin{equation*}
\Delta \vp_\e \leq \frac{C_1}{|x|^2\log |x|} \vp_\e\quad\text{in }B_{r_0}\setminus\{0\},
\end{equation*}
by choosing $\delta,r_0>0$ small, $C_1>0$ large. Here one may choose $\delta$ and $C_1$ to depend only on $n$. 

Due to the assumption \eqref{eq:remv0}, we have $a(x) = |\u|^{\frac{2}{n-2}} = o (-|x|^2\log |x|)$, whence $\vp_\e$ becomes a supersolution of $\Delta u_i = -a(x) u_i$ in $B_{r_0}\setminus\{0\}$, by choosing $r_0>0$ sufficiently small, where $u_i$ is the $i$-th component of $\u$. The rest of the proof follows the same argument shown in the proof of Lemma \ref{lemma:remv+sub}, which  eventually leads us to 
$u_i \leq \vp_\e$ in $B_{r_0}\setminus\{0\}$.
Passing to the limit with $\e\ra 0$, we get 
\begin{equation*}
u_i(x) \leq C_0 r_0^{-\delta} |x|^{2-n+\delta}\quad\text{in }B_{r_0}\setminus\{0\}.
\end{equation*}
Now that this inequality holds for any $1\leq i\leq m$, we arrive at \eqref{eq:remv-re0} with $c = C_0r_0^{-\delta}\sqrt{m}$. 

Thus, it follows from \eqref{eq:remv-re} that $a(x) = |\u|^{\frac{2}{n-2}}\in L^{\frac{n}{2-\eta}}(B_1)$, for some $\eta>0$. We know from Lemma \ref{lemma:dist} that $u_i$ is a distribution solution of $-\Delta u_i = a(x) u_i$ in $B_1$ for each $1\leq i \leq m$. Hence, the classical result \cite[Theorem 1]{S} by Serrin implies that $u_i$ has a removable singularity at the origin, and the lemma is proved. 
\end{proof}

Theorem \ref{theorem:main} (iv) is now merely a combination of Lemma \ref{lemma:class0} and Lemma \ref{lemma:remv0}.

\begin{proof}[Proof of Theorem \ref{theorem:main} (iv)]
If $\u$ has a non-removable singularity at the origin, then according to Lemma \ref{lemma:remv0}, $\u$ does not satisfy \eqref{eq:remv0}. By Lemma \ref{lemma:class0}, we have \eqref{eq:asym0}, proving the theorem.  
\end{proof}



\noindent {\bf Acknowledgement.} S. Kim was supported by National Research Foundation of Korea (NRF) grant funded by the Korean government (NRF-2014-Fostering Core Leaders of the Future Basic Science Program). H. Shahgholian was supported in part by Swedish Research Council.

This work was partly conducted during the first and second author's visit to Royal Institute of Technology (KTH) in Stockholm. They wish to thank Henrik Shahgholian for the kind invitation and KTH for the hospitality.

The authors would like to thank the anonymous referees for their valuable comments. Especially, we are grateful for one of the referees who pointed out a precise characterization of the new Pohozaev invariant as well as the explicit solution featuring the nontrivial invariant in the two-particle system.


\end{document}